\documentclass{amsart}
\usepackage{amssymb}
\usepackage{epsfig}
\usepackage{comment}

\vfuzz2pt 
\hfuzz2pt 
\newtheorem{thm}{Theorem}[section]

\newtheorem{lem}[thm]{Lemma}
\newtheorem{prop}[thm]{Proposition}
\theoremstyle{definition}
\newtheorem{defn}[thm]{Definition}
\theoremstyle{remark}
\newtheorem{rem}[thm]{Remark}

\numberwithin{equation}{section}

\newcommand{\eps}{\varepsilon}
\newcommand {\st}{{\rm s}}
\newcommand {\un}{{\rm u}}
\newcommand {\geodesic}{{\xi}}

\begin{document}

\title[Perturbations of geodesic flows]
{Perturbations of geodesic flows by\\ recurrent dynamics}%

\author[Marian Gidea and Rafael de la Llave ]{Marian Gidea$^\dag$ and Rafael de la Llave $^\ddag$}

\address{School of Mathematics, Institute for Advanced Study, Princeton, NJ 08540, USA, and Department of Mathematics, Northeastern Illinois University, Chicago, IL 60625, USA}%
\email{mgidea@neiu.edu}%
\address{School of Mathematics,
Georgia Institute of Technology,
Atlanta, GA 30332, USA}
\email{rafael.delallave@math.gatech.edu}
\thanks{$^\dag$Research of M.G. was partially supported by NSF grants: DMS
0601016 and DMS 0635607.}
\thanks{$^\ddag$Research of R.L. was partially supported by NSF grants: DMS
1162544.} \subjclass{Primary,
37J40;  
37C50; 
37C29; 
Secondary,
37B30}

\keywords{Mather acceleration theorem; Arnold diffusion; shadowing.}%
\thanks{}
\begin{abstract} We consider a geodesic flow on a compact manifold endowed with a
Riemannian (or Finsler, or Lorentz) metric satisfying
some generic,  explicit conditions. We couple the geodesic flow
with  a time-dependent potential, driven by an external flow on some other compact manifold.
If the  external flow satisfies some  very general recurrence condition, and the potential satisfies some explicit conditions that are also very general,   we show that the coupled system has orbits whose energy grows  at a linear rate with respect to time. This growth rate is optimal.
We also show the existence of symbolic dynamics.

The existence of orbits whose energy
grows unboundedly in time is related to Arnold's diffusion problem.

A particular case of this phenomenon is obtained when the external dynamical system  is quasi-periodic, of rationally independent frequency vector, not necessarily Diophantine, thus extending  \cite{DLS06a}.
We also recover `Mather's acceleration theorem' for time-periodic perturbations
\cite{Mather,DLS00,GideaLlave06}.

Since the general class of perturbations that we consider in this paper are not characterized by a frequency of motion, we are not able to use KAM or Aubry-Mather or averaging theory. We devise a new mechanism to construct orbits whose energy evolves in a prescribed fashion. This mechanism intertwines the inner dynamics restricted to some normally hyperbolic manifold with the outer dynamics corresponding to two distinct choices of homoclinic excursions to that manifold. As this mechanism uses very coarse information on the dynamics, it is potentially of wider applicability.
\end{abstract}
\maketitle
\section{Introduction}\label{section:introduction}


\subsection{The Hamiltonian instability problem} The phenomenon of instability in nearly integrable Hamiltonian systems was discovered in \cite{Arnold64}, which described a geometric mechanism based on whiskered tori and verified it in an example. Conceptually, this mechanism showed  how to increase the energy of a conservative mechanical system by using small perturbations. Many subsequent developments in studying the Arnold mechanism, including incorporating variational methods, can be found in the literature, e.g., in  \cite{ChierchiaG94,Bessi1996,FontichM01}.
Due to its importance to practical applications, the instability phenomenon has  also been studied by physicists using heuristic and numerical methods, e.g., in \cite{Chirikov1985,Zaslavsky,Lieberman,Tennyson}.

A remarkable revival of interest in the problem was stimulated by the unpublished work \cite{Mather}, which led to several new approaches -- variational, geometric, or mixed --  e.g., in \cite{Mather,Mather2004,Mather2012,ChengY2004,ChengY2009,Cheng2012,DGLS08,KaloshinBZ11,KaloshinZ12a,KaloshinZ12b}.

The geometric approach of \cite{DLS00,GideaLlave06,GideaR07} does  not only use whiskered tori but also normally hyperbolic invariant manifolds (NHIMs) whose stable and unstable manifolds intersect. (The method of \cite{GideaLlave06} bypasses whiskered tori completely.) The mechanism is based on interleaving homoclinic excursions (conveniently quantified by the scattering map) with the dynamics on the NHIM.

In the mechanisms discussed in the above papers \cite{DLS00,GideaLlave06,GideaR07}, there are two time scales. The homoclinic excursions are fast while the dynamics in the NHIM has slow components. If one assumes that the perturbations are periodic (or quasi-periodic) the inner dynamics can be controlled using the classical averaging method.

The main idea of this paper is that we can show Hamiltonian instability by relying almost exclusively on homoclinic excursions, through  carefully choosing the times when to jump onto the homoclinics. To apply  this novel mechanism, we do not need to assume much on the form of the perturbations.  As it turns out, by jumping onto the homoclinics at about the same place, we can obtain effects similar to averaging.  The gain of energy is obtained by timing the place of the jumps and by taking advantage of the fact that there are several homoclinics available for jumping. To concatenate these excursions, we take advantage of the method of `correctly aligned windows' which is more flexible than other shadowing techniques, and, at the same time, is precise enough to keep track of time. Two of the features of the method of correctly aligned windows that are crucial for us are that it works in systems with only partial hyperbolicity, and that it allows to concatenate segments  of trajectories.

To illustrate the approach (we hope that there are several other applications forthcoming) we study the model considered in \cite{Mather}, consisting of a geodesic flow perturbed by a time-dependent potential. Since we do not need to control the inner dynamics much we do not need averaging and we do not need to assume that the external flow is periodic or quasi-periodic. Our results also show instability in the models considered in \cite{DLS00,GideaLlave06,GideaR07} but remove several hypotheses and also give optimal diffusion time.  Of course, the orbits produced are very different from those in the above papers.   Note that the models considered in this paper do not easily admit a  quantitative averaging theory in the style of the classical one (good expositions of classical averaging theory are in \cite{Lochak,Arnold}. We do not know if the ergodic averaging theory \cite{Anosov} is enough to obtain geometric results.

Of course, we do not know whether the variational methods can be applied to general time dependent systems. Some of the geometric features of minimizers well known for periodic perturbations have counter-examples for quasi-periodically forced systems
\cite{Lions}. Of course, variational methods work under the assumption that the system is convex (i.e., its Hessian matrix is positive definite), but this is not required by our method. In this paper, we do not assume that the metric defining the geodesic flow is Riemannian, and the arguments apply just as well to Finsler or Lorentz metrics.

\subsection{Perturbed geodesic flow model}
The explicit model that we treat in this  paper is the following. We consider an (unperturbed) geodesic flow on
the unit tangent bundle of a compact manifold has a hyperbolic periodic orbit whose stable and unstable manifolds
intersect transversally. Via a potential, we couple the dynamics of the geodesic flow with that of an external flow
on some other compact manifold. We assume that the external flow has a non-trivial uniformly recurrent point.
We show that, if the coupling between the geodesic flow and the external flow satisfies some
non-degeneracy conditions, which we formulate explicitly, then the resulting system possesses some orbits
along which the energy grows to infinity linearly in time; such growth rate is optimal.
Also, we show that there exist  orbits whose energy follows a prescribed energy path.

We point out that the explicit assumptions on the geodesic
flow, on the external dynamics, and on the coupling, are very general.
They hold generically and can be checked in concrete examples
by a finite precision calculation. Also, the construction of orbits whose energy grows to infinity at an optimal rate, or of orbits whose energy behaves in some prescribed way,  is explicit, a fact which
can be potentially exploited in  concrete applications, such as in celestial mechanics.

A particular class of systems that we refer to in this paper
are quasi-periodic perturbations of the geodesic flow, which have been considered in the literature  \cite{DLS06a}.
Nevertheless, the mechanisms we use in this paper are
very different from those of the previous papers, and the
produced orbits are  different as well. This is relevant for applications, when one is not only interested in establishing the
presence  of the  `diffusion'  phenomenon, but also  in
exploring various diffusion mechanisms which can feature different characteristics \cite{Tennyson}.

An intuition to keep in mind is that the unperturbed geodesic flow describes the dynamics of a
mechanical system with a Maupertuis metric. Its energy is conserved.
The external flow, which is governed by its own intrinsic dynamics, exerts a time-dependent perturbation on the mechanical system. For the perturbed system, the energy conservation law does not hold in general. If the external flow moves with some frequency, it seems conceivable that by choosing trajectories of the geodesic flow that are in resonance with that frequency one can obtain unbounded energy growth for the perturbed geodesic flow.

The striking conclusion of this paper is that one can achieve unbounded energy growth even when the external flow has no  frequency of motion. The only assumption we use is recurrence. We can paraphrase this situation:

\begin{center}\textit {A little recurrence goes a long way.}\end{center}

\subsection{Contents of the paper}
The set up and main results appear in Section 2.
The proofs of the main results are based on a new mechanism combining geometric
methods, perturbation theory, and topological techniques.
Since all the constructions are rather explicit, we anticipate that this can be used in concrete applications, such as in celestial mechanics.

Section 3 describes the geometric method that is used in the paper.  The perturbed
system is first expressed as a slow and small perturbation of an
integrable Hamiltonian, through some rescaling of the coordinates and of the
time. Due to the assumptions on the geodesic flow, the unperturbed system possesses
a normally hyperbolic invariant manifold whose stable and unstable manifolds intersect transversally, at various places.
The transverse homoclinic intersections are used to define different  `scattering maps'.  A scattering  map is defined on a subset of  the   normally hyperbolic invariant manifold, and encodes information on
the  homoclinic trajectories.
For all sufficiently small perturbations -- which in our
case amounts to all sufficiently large energies --
the normally hyperbolic invariant manifold from the unperturbed system survives
to the perturbed system, and its stable and stable manifolds
keep intersecting transversally.  This allows us to continue the scattering maps from the unperturbed problem to the perturbed one.
The effect of a scattering map on the energy of the system
can be computed very explicitly  using the fact that the energy
is a slow variable.

In Section 4 we show that, by interspersing the `outer' dynamics, given by a scattering map, with the `inner' dynamics, given by the restriction of the flow to the normally hyperbolic invariant manifold, we can arrange that the energy changes by arbitrarily large quantities, and, in particular,  grows to infinity.

At the first stage, we construct some elementary building blocks for the two-map dynamics, each block consisting of one applications of a scattering map followed by a segment of a trajectory of the inner dynamics.
We can compute explicitly the change of energy along such an elementary building block.

At the second stage, we show that, under appropriate conditions, we can construct sequences of elementary building blocks whose energy experiences the desired changes. We also show that these effects do not happen with one scattering map, but that is crucial to use at least two scattering maps. In particular, we show that we can arrange the choices of the scattering maps and of the corresponding blocks so that  we can consistently grow the energy of the system, at an optimal rate.

These sequences of blocks determine pseudo-orbits, which are concatenation of orbits of the inner dynamics and of orbits of the outer  dynamics. They do not immediately yield true  orbits for the perturbed geodesic flow.

To prove the existence of true orbits, in Section 5  we apply the topological method of correctly aligned windows.
Around the  pseudo-orbits we construct windows that are correctly aligned, i.e.,
sequences of multi-dimensional rectangles that successively cross  one another in a non-trivial way under the inner or
the outer dynamics. A topological version of the Shadowing Lemma implies the existence of
true orbits near those pseudo-orbits.
We note that the main difficulty to carry the shadowing
argument is that the underlying dynamical system is not hyperbolic, as there
are neutral directions along the normally hyperbolic invariant manifold. Hence
the customary hyperbolic shadowing methods do not apply.

The orbits constructed in our paper are very different from the orbits previously considered in the literature.
The orbits we consider stay very little time near the unperturbed periodic orbits between successive homoclinic excursions.
This has several technical consequences: we do not need to rely on the KAM theorem, on Aubry-Mather theory, or on averaging, in order to control the behavior of the inner dynamics (the dynamics near the closed orbits between homoclinic excursions). Avoiding the use of averaging theory allows us to consider very general perturbations.  Also,   we do not need to study the geometric details on how the inner dynamics is affected by the perturbation. On the other hand,  we do have to rely more heavily on the scattering map, and to develop some sophisticated shadowing mechanisms that are not based on  hyperbolicity but rather on topological tools.


\subsection{Some other related works}
Results related  to the ones considered in this paper were proved in
\cite{Piftankin2006,PiftankinT2007}, using the method of the
separatrix map and the idea of the anti-integrable limit. See also
\cite{BolotinTreschev1999}.  The existence of orbits whose energy grows
in time at different rates, depending on the type of perturbation, was
proved in \cite{DGLS08,GelfreichT2008,Treschev04} for general Hamiltonian systems
that slowly depend on time. Their method is quite different: under
some assumptions, they identify in the frozen system two families of
periodic orbits that give rise, in the time-dependent system, to
orbits that jump from near one family to near the other in a manner
that yields unbounded growth of energy. A general discussion on the
speed of diffusion in a priori unstable Hamiltonian systems appears in
\cite{Fejoz2011}; in particular, this paper considers diffusion in the
planar elliptic restricted three-body problem. The diffusion
phenomenon in the case of the spatial circular restricted three-body
problem is considered in \cite{DGR11a,DGR11b}.

\section{Set-up and main results}
\label{section:main}
\subsection{The geodesic flow}

In this subsection we review some well known facts on the geodesic flow, and we set up the notation.

Let $M$  be an $n$-dimensional $C^r$-smooth compact manifold and $g$
be a $C^r$ Riemannian metric on $M$, where $r\geq r_0$.
(Some non-optimal values of $r_0$ are discussed in
Subsection \ref{subsection:regularity} at the end of the proofs of the main results.)   The
geodesic flow  $\geodesic: TM\times \mathbb{R}\to TM$ is the flow on the
tangent bundle $TM$ (of dimension $2n$), defined by
\[\geodesic_t(x,v)=(\xi_{x,v}(t),d\xi_{x,v}/dt(t)),\] where
$\xi_{x,v}$ is the unique geodesic with $\xi_{x,v}(0)=x$ and
$d\xi_{x,v}/dt(0)=v$. To a geodesic curve $\xi$
 on $M$    corresponds a     trajectory of the
geodesic flow given by
\[t\mapsto (\xi(t),(d\xi/dt)(t)).\]
In this paper we do not distinguish  between   a geodesic curve and   the corresponding trajectory of the geodesic flow.
Also, we do not distinguish   between  a parametrized curve and   its image.

Since the
speed along a geodesic is constant, every geodesic can be reparametrized  as a unit speed geodesic. This way we reduce the study of the geodesic flow to its  restriction  to the
unit tangent bundle $T^1M$ (of dimension $2n-1$).

Using the standard identification between  $TM$ and $T^*M$ given by $g$,  we can interpret
the geodesic flow as the Hamiltonian flow for the Hamiltonian  $H_0:T^*M\to \mathbb{R}$ given by
\begin{equation}\label{eqn:unpertham}
H_0(x,y)=\frac{1}{2}g_x(y,y).
\end{equation}
 On $T^*M$ we consider the standard symplectic
form $\omega=dy\wedge dx$.

Since $H_0$ is independent of time, it is a
conserved quantity. The energy surfaces $\Sigma_E = \{ H_0 = E\}$
are invariant under $\geodesic_t$.
We will denote by $\geodesic_{E,t}$ the
flow restricted to the energy surface $\Sigma_E$.
The restriction   $\geodesic_{1/2,t}$ of $\xi_t$ to
the energy manifold $\Sigma_{1/2}=\{H_0=1/2\}$ corresponds to the geodesic flow on
$T^1M$.

It is clear that the flow restrictions to different energy surfaces are equivalent to one another and can be obtained
just by a rescaling of time and momentum.
The mapping $D_{\sqrt{2E}}(x,y) = (x, \sqrt{2E} y ) $ -- dilation
along the fibers of $T^*M$, which is a well defined operation, because the fibers are linear spaces --
gives a diffeomorphism between $\Sigma_{1/2}$ and $\Sigma_E$.
Furthermore
\begin{equation}\label{eq:scale1}
\begin{split}
 \geodesic_{E, t}\circ D_{\sqrt{2E}} &= D_{\sqrt{2E}}\circ\geodesic_{1/2,\sqrt{2E}t},\\
 \geodesic_{ t}\circ D_{\sqrt{2E}} &= D_{\sqrt{2E}}\circ\geodesic_{\sqrt{2E}t}.
\end{split}
\end{equation}
Explicitly, the  trajectory of the geodesic flow  $\xi_E=(\xi^x_{E},\xi^y_{E})$ at energy $E$ is
related to the trajectory of the geodesic flow $\xi_{1/2}=(\xi^x_{1/2},\xi^y_{1/2})$ at energy $1/2$ by the following formula:
\begin{equation}\label{eq:scale2}(\xi^x_{E}(t),\xi^y_{E}(t))=(\xi^x_{1/2}(\sqrt{2E}\cdot t),
\sqrt{2E}\cdot \xi^y_{1/2}(\sqrt{2E}\cdot t)).\end{equation}

Below we will regard the geodesic flow as an unperturbed dynamical
system, to which we will apply an external perturbation.

\subsection{Assumptions on the geodesic flow}
We make the following assumptions on the geodesic flow.

\textsl{A1. We assume that there exists a closed, unit speed
geodesic $\lambda$, which is a hyperbolic periodic orbit for the
geodesic flow on $T^1M$.}

\textsl{A2. We assume that there exists a unit speed geodesic
$\gamma$  which is a transverse homoclinic orbit to
$\lambda$ for the geodesic flow on $T^1M$.}

Note that condition \textsl{A1} means that $\lambda$ has stable
and unstable manifolds $W^\st(\lambda)$ and $W^\un(\lambda)$, of
dimension $n$, in $T^1M$. The fact that a
geodesic $\xi$ is unit speed means that $g\left( d\xi/dt , d\xi/dt \right)=1$.
Without loss of generality, we can assume that the period of the geodesic
$\lambda$  from assumption \textsl{A1} is $1$.

The transversality condition \textsl{A2} means that
\[T_{z}W^s(\lambda)+T_{z}W^u(\lambda)=T_{z}(T^1M),\,
\textrm{ for all } z\in \gamma.\]

We note that there are many manifolds
for which the conditions \textsl{A1} and \textsl{A2} are satisfied for an
abundance of  Riemannian metrics; a partial review of  the existing results  is provided in \cite{DLS06a}. As an example,  any
surface of genus $2$ or higher, with any $C^{2+\delta}$ metric,
$\delta> 0$, has hyperbolic geodesics with transverse homoclinic
connections \cite{Katok82}. A very general result was  obtained in \cite{Contreras2007}, showing
that on any closed manifold $M$ with $\dim(M)\geq 2$ the set of
$C^\infty$ Riemannian metrics whose geodesic flow contains a
non-trivial hyperbolic basic set is $C^2$-open and $C^\infty$-dense. This  implies that the hypothesis \textsl{A1} and \textsl{A2} from above hold
for a  $C^2$-open and $C^\infty$-dense  set of Riemannian metrics.

Note that the energy $H_0$ along both the hyperbolic orbit $ \lambda$ in
\textsl{A1} and the homoclinic orbit $\gamma$ in \textsl{A2} equals~$1/2$,
but similar orbits exist in all energy surfaces because of
\eqref{eq:scale2}.

\subsection{Coupling the geodesic flow with an external dynamical system}
Next we will describe a class of perturbations of the geodesic
flow for which we will show the existence of orbits with unbounded growth of energy over time, as well as of symbolic dynamics.

We first describe an external dynamical system which we couple with the geodesic flow through a time dependent potential.
Let $X:N\to TN$ be a $C^1$-smooth
vector field on a compact manifold $N$. Let $\chi_t$ be the flow  on $N$ associated to
$X$. Let $\theta_0\in N$  and let $\chi_t(\theta_0)$ be an integral curve to $X$ with the initial condition $\theta_0\in N$.

Let \[\mathcal{V}=\{V:M\times N\to\mathbb{R}\,|\,  V \textrm{ is a } C^r-\textrm{ differentiable in $q$  and $C^1$-differentiable in $\theta$ } \}.\] We think of $V\in \mathcal{V}$ as a potential depending on the parameter $\theta$ evolving in $N$.
For every $V\in\mathcal{V}$ we  consider  a  parameter-dependent,  time-dependent Hamiltonian
$H_{\theta_0}:T^*M\times \mathbb{R}
\to\mathbb{R}$ given by
\begin{equation}\label{eqn:pertham1}
H_{\theta_0}(x,y,t)=H_0(x,y)+V(x,\chi_t(\theta_0)).
\end{equation}

\subsection{Assumptions on the external dynamical system}
We now describe some conditions on the external dynamical system.

Given $(N,\chi)$, a point $\theta_0\in N$ is said to be uniformly recurrent (or syndetically recurrent) if for every open neighborhood $U$ of $\theta_0$ there exists $T>0$ such that,  every interval $(a,b)$ with $b-a>T$ contains a time $t$ with $\chi_t(\theta_0)\in U$.  That is, a uniformly recurrent point is one which is recurrent with `bounded return times'.

The flow $\chi$  on $N$ is said to be minimal if every orbit  is dense in $N$.

Since $N$ is compact, if the flow in minimal then
for every open set $O\subseteq N$ there exists $T\geq 0$, depending on $O$, such that $\bigcup_{t\in[0,T]}\chi_t(O)=N$.

Since $N$ is compact, then there always
exists a point $\theta_0\in N$ that is uniformly recurrent for $(N, \chi)$.  This follows from the Poincar\'e Recurrence Theorem.

Moreover, if $(N,\chi)$ is minimal then every point $\theta\in N$ is uniformly recurrent.

Conversely,  if $\theta_0\in N$ is uniformly recurrent then its orbit closure   is a minimal set.

If every point $\theta\in N$ is uniformly recurrent then $N$ is the disjoint union of its minimal subsystems, in which case $N$ is called semi-simple. See \cite{Furstenberg81}.

The following two alterative conditions will be used in the statements of the main results.

\textsl{A3. We assume that the flow $\chi_t$  has a uniformly recurrent point $\theta_0\in N$ with $X(\theta_0)\neq 0$.}

\textsl{A3'. We assume that the flow $\chi_t$  is minimal  on $N$.}

\subsection{Statement of the results}
The following result  provides the existence of trajectories with unbounded growth of energy for the system \eqref{eqn:pertham1}.

\begin{thm}\label{thm:main1}
Let $g$ be a Riemannian metric on $M$ satisfying the conditions \textsl{A1},
\textsl{A2}, and let $(N,\chi)$  be an external dynamical system satisfying
\textsl{A3}. Then there exist $\theta_0\in N$, and   $\mathcal{V}'$,  $C^2$-open and $C^r$-dense in  $\mathcal{V}$ with respect to the $C^r$-topology, $r \ge r_0$, such that, for every $V\in\mathcal{V'}$,   the system \eqref{eqn:pertham1} has a
solution  with $H_{\theta_0}(x(t),y(t),t)\geq At+B$, for some $A,B\in\mathbb{R}$ with $A>0$, and for all $t$ sufficiently large.
\end{thm}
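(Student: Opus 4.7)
The plan is to work in the extended phase space $T^*M \times N$, where the time-dependent coupling \eqref{eqn:pertham1} becomes autonomous. First, I would rescale to the high-energy regime using \eqref{eq:scale1}: setting $y=\sqrt{2E}\,\tilde y$ and $\tau = \sqrt{2E}\,t$ turns the Hamiltonian into $H_0(x,\tilde y) + \eps \,V(x,\chi_{\tau/\sqrt{2E}}(\theta_0))$ with $\eps = 1/(2E)$, so on each high-energy shell the potential is both small and slowly varying in the new time. The unperturbed flow on $T^*M \times N$ has a normally hyperbolic invariant manifold $\tilde\Lambda$ obtained by assembling (via \eqref{eq:scale2}) the rescaled copies of $\lambda$ for $E \geq E_0$ and crossing with $N$; assumption \textsl{A2} provides a transverse homoclinic channel $\Gamma$ coming from $\gamma$, which persists for all $\eps$ small enough together with $\tilde\Lambda$ itself.

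Next, I would introduce (at least) two scattering maps $S_1,S_2 \colon \tilde\Lambda \to \tilde\Lambda$ associated with two distinct homoclinic channels $\Gamma_1,\Gamma_2$; their existence follows from \textsl{A2} together with the standard fact that a transverse homoclinic orbit to a hyperbolic periodic orbit of a geodesic flow is accompanied by a whole family of transverse homoclinics. The effect of each $S_i$ on the slow variable $E$ is, to leading order in $\eps$, a Poincar\'e--Melnikov-type integral $M_i(s,\theta)$ of $V$ (and its derivatives) along the corresponding homoclinic trajectory, depending on the phase $s$ on $\lambda$ and on the external variable $\theta \in N$. The set $\mathcal{V}'$ is then defined by the condition that $\max\bigl(M_1(s,\theta_0),\,M_2(s,\theta_0)\bigr)$ is bounded below by a positive constant as $s$ ranges over the circle; this is a finite-codimension transversality requirement, hence $C^2$-open and $C^r$-dense in $\mathcal{V}$.

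The heart of the argument is the construction of a pseudo-orbit on $\tilde\Lambda$ whose energy grows linearly. Starting at a base point, at each stage I would apply whichever of $S_1, S_2$ produces the larger energy gain at the current $\theta$, then let the inner dynamics (the flow on $\tilde\Lambda$, which factors through $\chi_t$ on the $N$ component) run for a bounded waiting time until the external variable returns to a favorable region. Uniform recurrence of $\theta_0$ (\textsl{A3}) is exactly what guarantees a uniform upper bound $T$ on these waiting times: on every interval of length $T$ the orbit of $\theta_0$ enters a neighborhood where one of the $M_i$'s attains its chosen positive lower bound. Concatenating these blocks yields a pseudo-orbit whose rescaled energy increases by at least $\eps\delta$ per block of duration at most $T$, giving linear growth in $\tau$; undoing the rescaling \eqref{eq:scale2} gives linear growth of $H_{\theta_0}$ in the original time $t$.

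The final step, which I expect to be the main obstacle, is promoting the pseudo-orbit to a genuine trajectory of the perturbed system. Standard hyperbolic shadowing fails because $\tilde\Lambda$ carries neutral directions (the flow along $\lambda$ and the full $N$-factor), so I would instead build, around the pseudo-orbit, a sequence of windows adapted to the splitting into center/stable/unstable directions, verify correct alignment alternately under the inner flow and under the scattering-map transitions (this is where the precision of the Melnikov estimates and the transverse intersection of $W^s(\tilde\Lambda), W^u(\tilde\Lambda)$ is essential), and apply the topological shadowing lemma of Section~5 to produce a true orbit staying uniformly close to the pseudo-orbit. Since energy is continuous and the pseudo-orbit satisfies $E(\tau) \geq A'\tau + B'$ with $A' > 0$, the shadowing orbit satisfies $H_{\theta_0}(x(t),y(t),t) \geq At + B$ for all $t$ sufficiently large, completing the proof.
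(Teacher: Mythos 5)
Your overall architecture (rescaling to a slow--small perturbation, persistence of the NHIM, two scattering maps with Melnikov-type energy jumps, blocks, and topological shadowing via correctly aligned windows) matches the paper, but the two places where the paper does the real quantitative work are not correct in your proposal. First, your definition of $\mathcal{V}'$ — that $\max\bigl(M_1(s,\theta_0),M_2(s,\theta_0)\bigr)$ be bounded below by a positive constant for all phases $s$ — is an open condition but it is \emph{not} dense: one can choose $V$ so that $D_XV(\cdot,\theta)$ is, for $\theta$ near $\theta_0$, a fixed negative bump localized near points of $\gamma^1$ and $\gamma^2$ away from $\lambda$, making both Melnikov functions uniformly negative at $\theta_0$; this persists under small perturbations, so the complement of your set has nonempty interior and no ``finite-codimension transversality'' argument can rescue density. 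The paper's condition \textsl{A4}, by contrast, is the \emph{inequality} \eqref{eqn:A4} between the suprema of the two block-gain functions $G_1^1,G_1^2$ of \eqref{G1}; positivity of either function is never assumed (and in fact cannot hold on average along the external orbit), and it is this inequality that is shown to be open and dense in Lemma \ref{lemma:generic}.

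Second, the energy bookkeeping of your greedy scheme (``jump when $\theta$ is favorable, wait otherwise'') is not closed. In the rescaled variables the external flow has speed $\eps$, so the waiting stretches between returns of $\theta$ to the favorable neighborhood have rescaled length $O(1/\eps)$, during which the energy drifts at rate $\eps^3 D_XV$, i.e.\ by a quantity of size $O(\eps^2)$ per waiting stretch — one order larger than the $O(\eps^3)$ gained by a single scattering jump. Without a cancellation mechanism, the net change per recurrence cycle has undetermined sign, and linear growth does not follow. The paper's proof is built precisely around such a cancellation (Subsection \ref{subsection:gainseq}): blocks of map $1$ at the optimal phase $\phi_*$ are used while $\theta\in\mathcal{P}$, blocks of map $2$ at a \emph{fixed} phase $\phi_0$ are used outside $\mathcal{P}$, and the total contribution of the map-$2$ blocks is subdominant because $G_1^2=D_XA_1^2$ is an exact derivative along the external flow, so its Riemann sum telescopes to $O(\eps^2)$ over a scaled time $1/\eps^2$ (see \eqref{eqn:calculus3}--\eqref{eqn:calculus4}); the dominant gain $\eps\tau_0\delta$ then comes from the difference $G_1^1-G_1^2\ge\delta$ accumulated during the recurrent visits to $\mathcal{P}$. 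You would need to replace your genericity condition by one of this difference type and incorporate the telescoping (or an equivalent averaging-free cancellation) before the pseudo-orbit construction, and hence the shadowing step, yields the claimed linear growth.
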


For example, the condition \textsl{A3'} in Theorem \ref{thm:main1} is automatically satisfied if  $X(\theta)\neq 0$ for all $\theta$. Note that in this case the  only requirement  on the flow $\chi$ on $N$ is that it does not have any fixed points. This condition is necessary since we want,  as we shall see in the argument in Subsection \ref{subsection:gainseq}, that the flow line $\chi_t(\theta_0)$ leaves some neighborhood of  the point $\theta_0$ before it returns again to that neighborhood.

\begin{thm}\label{thm:main2}
Let $g$ be a Riemannian metric on $M$ satisfying the conditions \textsl{A1},
\textsl{A2}, and let $(N,\chi)$  be an external dynamical system satisfying
\textsl{A3'}.
Then there exists  a set $\mathcal{V}'$,  open and dense in  $\mathcal{V}$ with respect to the $C^r$-topology, $r \ge r_0$, such that, for every $V\in\mathcal{V'}$,  and every $\theta_0\in N$, the system \eqref{eqn:pertham1} has a solution  for which the energy $H_{\theta_0}(x(t),y(t),t)$ grows
linearly to infinity as $t$ tends to infinity, i.e, $H_{\theta_0}(x(t),y(t),t)\geq At+B$ for some $A,B\in\mathbb{R}$ with $A>0$, and for all $t$ sufficiently large.
\end{thm}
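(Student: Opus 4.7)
\textbf{Proof proposal for Theorem \ref{thm:main2}.} My plan is to reduce the claim to a uniform-in-$\theta_0$ version of the construction that underlies Theorem \ref{thm:main1}. Three features of minimality drive this reduction. First, a minimal flow on a compact manifold of positive dimension has no fixed points, so $X(\theta)\neq 0$ for every $\theta\in N$ and hypothesis \textsl{A3} is automatically available at every base point. Second, as already recorded in the set-up, minimality upgrades uniform recurrence from some point to every point, and for any open $O\subset N$ there is a return-time bound $T=T(O)$ depending only on $O$, not on the base point. Third, density of every $\chi$-orbit lets one regard the Melnikov-type averages taken along the forcing $t\mapsto\chi_t(\theta_0)$ as sampling the full potential $V$, so that the relevant non-degeneracy conditions can be phrased intrinsically in terms of $V$.

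Concretely, I would first fix an auxiliary reference point $\theta_0^{\ast}\in N$ and carry out the construction of Sections~3--5 with forcing $\chi_t(\theta_0^{\ast})$. This produces the high-energy normally hyperbolic invariant manifold, at least two scattering maps $S^{(1)},S^{(2)}$ with explicit effects on the slow energy variable, and a candidate open dense $\mathcal{V}'\subset\mathcal{V}$ in the $C^r$-topology cut out by the standard non-degeneracy conditions (transversality of the homoclinic intersections of the perturbed NHIM, nonvanishing of Melnikov-type derivatives of $V$, and the effective distinctness of $S^{(1)}$ from $S^{(2)}$). Using density of the orbit of $\theta_0^{\ast}$ together with continuity of these integrals in $\theta_0^{\ast}$ and $V$, I would reorganize $\mathcal{V}'$ so that its definition no longer refers to $\theta_0^{\ast}$. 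Then, for arbitrary $\theta_0\in N$ and $V\in\mathcal{V}'$, the uniform return-time bound supplied by minimality allows the pseudo-orbit construction of Section~4 to be repeated verbatim: one selects times $t_k$ at which $\chi_{t_k}(\theta_0)$ revisits a prescribed small set, jumps onto a scattering-map homoclinic at each such time, and accumulates a positive energy increment per excursion, hence an energy gain linear in $t$. Applying the correctly aligned windows mechanism of Section~5 then shadows this pseudo-orbit by a true trajectory of \eqref{eqn:pertham1} with $H_{\theta_0}(x(t),y(t),t)\geq At+B$, $A>0$.

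The main obstacle I expect is the uniformity step: verifying that the genericity conditions defining $\mathcal{V}'$ really are independent of $\theta_0^{\ast}$. Purely geometric conditions such as transversality of $W^{\st}$ and $W^{\un}$ of the perturbed NHIM and transversality of the scattering maps with respect to the inner dynamics already involve the instantaneous potential $V(\cdot,\theta)$ for every $\theta\in N$ and pose no issue. The delicate conditions are the Melnikov-type averages controlling the energy change per excursion; here I would exploit minimality to replace the time-integral along $\chi_t(\theta_0^{\ast})$ either by an invariant-measure integral (Krylov--Bogolyubov) or, more directly, by finitely many pointwise non-vanishing conditions on $V$ obtained from density of the orbit and compactness of $N$. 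Once recast in this intrinsic form, the conditions define an open and $C^r$-dense subset of $\mathcal{V}$ uniformly in $\theta_0$, which is the $\mathcal{V}'$ required by the theorem.
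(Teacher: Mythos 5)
Your overall reduction is the right one and is in fact what the paper does: under \textsl{A3'} there are no zeros of $X$, every point of $N$ is uniformly recurrent, and the orbit of an arbitrary $\theta_0$ visits any fixed open set with bounded gaps, so the block construction of Section \ref{sec:blocks} and the window/shadowing argument of Section \ref{section:existenceoforbits} run unchanged; the paper's proof of Theorem \ref{thm:main2} is literally ``take $\theta_0$ arbitrary and perform the same construction.'' However, the step you single out as the main obstacle is not needed. The set $\mathcal{V}'$ is allowed to depend on the auxiliary reference point: condition \textsl{A4} (inequality \eqref{eqn:A4}) is evaluated at one fixed reference point and one fixed pair of homoclinics, and by openness it yields a flow box $\mathcal{P}\subseteq N$ around that point on which $G^1_1-G^2_1\geq\delta$. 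For an arbitrary initial $\theta_0$ one only needs its orbit to enter $\mathcal{P}$ syndetically, which minimality provides; no intrinsic, reference-point-free reformulation of $\mathcal{V}'$ (via invariant measures or otherwise) is required, since the theorem asks only for the existence of some open and dense set.

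The genuine gap is in the gain mechanism you describe: ``select times $t_k$ at which $\chi_{t_k}(\theta_0)$ revisits a prescribed small set, jump onto a scattering-map homoclinic at each such time, and accumulate a positive energy increment per excursion.'' Read literally this fails for two reasons. First, with a single scattering map one cannot arrange a positive increment per excursion along a recurrent forcing: the leading term of each jump, $\eps^3 G_1$, is the derivative in the direction of $X$ of a bounded function of $\theta$, so its accumulated effect over a recurrence cycle essentially cancels (this is the telescoping estimate \eqref{eqn:calculus4} and the remark closing Subsection \ref{subsection:gainseq}); this cancellation is exactly why two geometrically distinct homoclinics and the comparison condition \textsl{A4} are indispensable, not just one of several genericity requirements. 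Second, jumping only at recurrence times is too sparse: recurrence gaps are $O(1/\eps)$ in scaled time, so over a scaled time $1/\eps^2$ you make only $O(1/\eps)$ jumps, contributing $O(\eps^2)$ to $H_\eps$ --- the same order as the uncontrolled drift of $H_\eps$ along the long inner-dynamics stretches between jumps --- whereas linear growth requires a gain of order $\eps$ per scaled time $1/\eps^2$. The paper obtains this by performing a homoclinic excursion in every block of scaled length $O(1)$, using $\tilde S^1_\eps$ at the distinguished phase while $\theta\in\mathcal{P}$ and $\tilde S^2_\eps$ otherwise; the $S^2$-contributions telescope to $O(\eps^2)$ and the excess $G^1_1-G^2_1\geq\delta$ on the positive fraction of blocks with $\theta\in\mathcal{P}$ supplies the dominant $O(\eps)$ gain. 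Without this continual-excursion, two-map switching structure, the pseudo-orbits you propose do not yield linear growth, so the core step of the proposal needs to be replaced by the argument of Subsection \ref{subsection:gainseq}.
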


The hypothesis \textsl{A3} in  Theorem \ref{thm:main1} requires one to choose the parameter $\theta_0$  to be a non-trivial uniformly recurrent point, and leads an unstable trajectory corresponding to that particular choice, while the hypothesis \textsl{A3'} in Theorem \ref{thm:main2} allows the parameter $\theta_0$  to be arbitrary.

We also note that Theorem \ref{thm:main2} remains valid under the weaker assumption that  the flow $\chi_t$ on $N$ is  semi-simple.

The linear growth rate $H_{\theta_0}(x(t),y(t),t)\approx
t$ in Theorem \ref{thm:main1} and Theorem \ref{thm:main2} is optimal.
Indeed, the energy $H_{\theta_0}(x(t),y(t),t)$
cannot grow in time faster than linearly, as we can easily show.  By \eqref{eqn:pertham1}, we have that
\[\frac{d}{dt} H_{\theta_0}(x(t),y(t),t)= \frac{\partial V}{\partial
  x} (x(t),\chi_t(\theta_0))X(\chi_t(\theta_0)),\] which is bounded due
to the compactness of $M$ and $N$.

In Subsection \ref{subsec:buildingblock}, we  provide   an explicit condition  \textit{A4} that ensures  $V\in \mathcal{V}'$, as in  Theorem    \ref{thm:main1} and  Theorem   \ref{thm:main2}.  We emphasize here that this condition  amounts to an explicit computation \eqref{eqn:A4} that is verifiable in concrete systems. The potentials $V$ satisfying this condition  form  a $C^2$-open and $C^\infty$-dense set in  $\mathcal{V}$.
The condition  \textit{A4} depends on a hyperbolic closed geodesic and a on a pair of geometrically distinct homoclinic  orbits associated to it. A generic geodesic flow has infinitely many homoclinic orbits to the same hyperbolic closed geodesic.
Of course, for the main results we only need to verify the condition  for just  a single  pair of homoclinic orbits.

Besides orbits whose energy grows unboundedly in time, there  also exist
orbits whose energy makes chaotic excursions, i.e., they follow any prescribed energy path. In other words,  the system admits symbolic dynamics.

\begin{thm}\label{thm:symbolic}
Assume the conditions of Theorem~\ref{thm:main1}, and a potential $V\in\mathcal{V'}$, with the set $\mathcal{V'}$
as in the statement of that theorem.
Let $E_*$ be sufficiently large.  There exists  $C> 0 $, depending on the potential $V$,
such that, for any function $\mathcal{E}: [0,\infty) \rightarrow [E_*, \infty)$,  such
that $|\mathcal{E}'| \le C$, there exist $\theta_0\in N$, a time reparametrization $\mathcal{T}:[0,\infty)\to[0,\infty)$,  and a solution  $(x(t),y(t))$  of the system  \eqref{eqn:pertham1}, and a constant $D>0$ such that
\[
|H_{\theta_0}(x(\mathcal{T}(t)),y(\mathcal{T}(t)))  - \mathcal{E}(t) | \le D \mathcal{E}(t)^{-1/2}.
\]\end{thm}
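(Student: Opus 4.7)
The plan is to view Theorem \ref{thm:symbolic} as a refinement of Theorem \ref{thm:main1}: instead of constructing pseudo-orbits whose energy \emph{grows}, we construct pseudo-orbits whose energy \emph{tracks} the prescribed profile $\mathcal{E}(t)$, and then shadow them by true orbits via the correctly aligned windows machinery of Section 5.

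First, from the preparatory work in Sections 3--4 (which underlies Theorem \ref{thm:main1}), we have at our disposal at least two scattering maps $S_1,S_2$, associated to two geometrically distinct homoclinic orbits to the NHIM $\tilde\Lambda$ of the perturbed system. Using the rescaling \eqref{eq:scale2}, the effect of each $S_i$ on the energy has an explicit leading-order expression of size $O(E^{-1/2})$ at base points in $\tilde\Lambda$ of energy $E$; more importantly, condition \textit{A4} guarantees that the two maps produce energy jumps whose values, as functions of the base point and of the external parameter $\theta=\chi_t(\theta_0)$, span an open interval around zero. In particular there are open subsets $U_1^\pm,U_2^\pm\subset N$ such that when $\chi_t(\theta_0)\in U_i^\pm$, applying $S_i$ at that moment changes the energy by $\pm c\,E^{-1/2}$ for some $c>0$, \emph{and} these four subsets can be arranged to be arbitrary in $N$ (this is where condition \textit{A3} is used).

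Second, I would build the pseudo-orbit by an inductive scheme in time. Fix $E_*$ large and set $t_0=0$. At step $k$, the pseudo-orbit sits on $\tilde\Lambda$ with energy $E_k$ approximately equal to $\mathcal{E}(t_k)$. I choose the index $i_k\in\{1,2\}$ and sign $\epsilon_k\in\{\pm\}$ so that $\epsilon_k c\,E_k^{-1/2}$ has the same sign as $\mathcal{E}'(t_k)$. Then I flow under the inner dynamics for a time $\tau_k$ chosen so that (a) $\chi_{t_k+\tau_k}(\theta_0)$ enters $U_{i_k}^{\epsilon_k}$, and (b) the resulting energy increment on applying $S_{i_k}$ at that moment matches $\mathcal{E}(t_k+\tau_k+\sigma_k)-\mathcal{E}(t_k)$ up to higher order, where $\sigma_k$ is the (order $E_k^{-1/2}$) time of the homoclinic excursion. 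The hypothesis $|\mathcal{E}'|\le C$ for a suitable constant $C$ is exactly what guarantees that the required increment lies within the achievable range $[-cE^{-1/2},cE^{-1/2}]$ relative to the bounded return time $\tau_k$, since the bounded inner time between excursions combined with $O(E^{-1/2})$ jumps produces an achievable slope bounded by a universal $C>0$. The uniform recurrence (\textit{A3}) provides the bounded return times $T$ to each $U_{i_k}^{\epsilon_k}$; choosing $\theta_0$ appropriately (uniformly recurrent with orbit closure meeting all four $U_i^\pm$) makes this work for every step.

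Third, the resulting sequence $(t_k,E_k)$ satisfies $|E_k-\mathcal{E}(t_k)|\le D\,\mathcal{E}(t_k)^{-1/2}$ by the error analysis: in one step we commit an error at most the block size $O(E_k^{-1/2})$, and higher-order errors telescope inside this envelope because the leading-order dynamics is a controlled Euler discretization of the smooth target $\mathcal{E}$. I then define the reparametrization $\mathcal{T}$ by affinely interpolating $\mathcal{T}(t_k)=T_k$, where $T_k=\sum_{j<k}(\tau_j+\sigma_j)$ is the real time elapsed in the pseudo-orbit. Finally, applying the correctly aligned windows construction and the topological shadowing lemma from Section 5 around this pseudo-orbit yields a genuine orbit $(x(t),y(t))$ of \eqref{eqn:pertham1} whose energy at time $\mathcal{T}(t)$ differs from $E_k$, and hence from $\mathcal{E}(t)$, by at most a quantity of the same order $O(\mathcal{E}(t)^{-1/2})$; absorbing constants gives the stated bound $D\mathcal{E}(t)^{-1/2}$.

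The main obstacle, and the step that absorbs most of the work, is the simultaneous matching at each step of two \emph{a priori} independent constraints: the arithmetic constraint that the energy jump equals the prescribed increment of $\mathcal{E}$, and the dynamical constraint that the external trajectory $\chi_t(\theta_0)$ be in the correct open set $U_{i_k}^{\epsilon_k}$ at the moment of the jump. Reconciling these requires exploiting both the freedom of choice between the scattering maps $S_1,S_2$ (to get both signs of the energy increment at every moment, via \textit{A4}) and the uniform recurrence of $\chi_t(\theta_0)$ (to ensure that each $U_{i_k}^{\epsilon_k}$ is reached within bounded time); the constant $C$ in the hypothesis $|\mathcal{E}'|\le C$ is dictated precisely by the interplay of these two mechanisms and is inversely proportional to the worst-case return time provided by \textit{A3}.
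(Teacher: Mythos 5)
Your overall architecture (build a pseudo-orbit of elementary blocks that tracks $\mathcal{E}$, then shadow it with correctly aligned windows and Theorem \ref{theorem:detorb}) is the same as the paper's, which reduces Theorem \ref{thm:symbolic} to the construction of Theorem \ref{thm:main1} with the level sets $J^i_\eps$ chosen to visit $\sqrt{2\mathcal{E}(k)}$ (Subsections \ref{subsection:symbolic} and \ref{subsection:reduction}). But the engine you use to produce energy increments of prescribed sign is not supported by the hypotheses, and this is a genuine gap. You claim that condition \textsl{A4} yields four open sets $U_i^\pm\subseteq N$ such that applying $S_i$ while $\chi_t(\theta_0)\in U_i^\pm$ changes the energy by $\pm c\,E^{-1/2}$, and that these sets ``can be arranged arbitrarily.'' Condition \eqref{eqn:A4} says only that the suprema over $\phi$ of the two leading terms $G^1_1,G^2_1$ differ at the single point $\theta_0$; it gives a \emph{relative} advantage of one scattering map over the other on a flow box $\mathcal{P}$, not control over the sign of an individual jump. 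The sign of a single block's energy change $\eps^3 G^j_1(J,\phi,\theta)$ is partly dictated by $\theta$, which the orbit cannot steer; indeed the Remark following \eqref{eqn:calculus5} shows that for either fixed map the contributions must change sign over a recurrence cycle precisely because their sum nearly telescopes to zero via the antiderivative estimate \eqref{eqn:calculus4}. The paper's mechanism for both gaining and losing energy is intrinsically a two-map comparison (use $S^1$ at a tuned phase while $\theta\in\mathcal{P}$, $S^2$ at a fixed phase otherwise, and the symmetric scheme on a box $\mathcal{P}'$ for loss), with the prescribed path followed by tuning the proportion of gain and loss stretches; it never selects the sign of individual jumps.

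A second, related problem is your error bookkeeping. Between jumps you wait for $\chi_t(\theta_0)$ to reach the desired region, which can take a scaled time of order $1/\eps$; during that wait the inner dynamics itself drifts the scaled energy by $O(\eps^2)$, i.e.\ by $O(1)$ in physical energy — much larger than the $O(E^{-1/2})$ per-step error you assume in your ``Euler discretization'' argument. This drift is harmless in the paper only because jumps occur every $O(1)$ of scaled time at a fixed angle and the accumulated drift is absorbed into the Riemann-sum/cancellation estimate \eqref{eqn:calculus4}; without that cancellation your telescoping bound $|E_k-\mathcal{E}(t_k)|\le D\,\mathcal{E}(t_k)^{-1/2}$ does not close. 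To repair the proof you should replace the per-jump sign selection by the paper's block-alternation scheme of Subsection \ref{subsection:gainseq} (and its mirror for energy loss), with the constant $C$ coming from the guaranteed slope $\sim\tau_0\delta$ of that mechanism rather than from return times alone.
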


\subsection{Some examples of applications.}

The conditions for the flow on $N$ in Theorem \ref{thm:main1} are very general.
What makes the examples presented below  more surprising is that
the flow on $N$ can have a
very simple orbit structure. The results include, as a particular case,
quasi-periodic forcing, in which case we have the presence of the KAM and Nekhorosev phenomena,
which prevent, or delay, the onset of linear growth of energy, for a positive measure set of orbits. This is why we single  out some examples
to showcase Theorem \ref{thm:main2}.

For systems with a rich orbit structure (e.g., horseshoes giving
rise to   symbolic dynamics), there are
simpler arguments that show that one can get instability. Roughly,
if the forcing system has essentially arbitrary orbits, we can
choose initial conditions in the forcing that lead to orbits that
always increase the energy. Hence, for systems with complicated orbit
structure there are many results of instability. A representative
paper of this line of reasoning  is \cite{Ortega2006}. Of course, even in the case when simpler mechanisms apply, the orbits constructed here are different, since we rely on the homoclinic excursions and not on riding the external forcing.

\subsubsection{Perturbation of the geodesic flow by a quasi-periodic potential}\label{subsection:quasiperiodic}

We consider the particular case when
$N=\mathbb{T}^d=\mathbb{R}^d/\mathbb{Z}^d$, and the flow on $N$  is a linear flow of rationally independent  frequency vector $\nu$, i.e.,
$\nu\cdot k\neq 0$ for all $k\in \mathbb{Z}^d$. Such a flow can be
written as $\chi_t(\theta_0)=\theta_0+\nu\cdot t$, for any initial
point $\theta_0\in \mathbb{T}^d$; for simplicity, we let
$\theta_0=0$.  The flow $\chi_t$ is
minimal on $N=\mathbb{T}^d$.

The corresponding  perturbed dynamical system  is described by the
time-dependent Hamiltonian $H:T^*M\times \mathbb{R}\to\mathbb{R}$
given by
\begin{equation}\label{eqn:pertham2}H(x,y,t)=H_0(x,y)+
V(x,\nu t).\end{equation}

As a consequence of Theorem \ref{thm:main1}, we obtain that, for a generic set of $C^r$-smooth potentials
$V:M\times \mathbb{T}^d  \to\mathbb{R}$, the system \eqref{eqn:pertham2} has solutions $(x(t),y(t))$ for which the energy
$H(x(t),y(t),t)$ grows linearly to infinity as $t$ tends to
infinity. Note that the frequency $\nu$ is not required to be Diophantine, as in \cite{DLS06a}. In addition to recovering the results from \cite{DLS06a} under   weaker conditions, we also obtain the existence of orbits whose energy grows at an optimal speed.
Again, we point out that the orbits constructed in this paper are very different from those in \cite{DLS06a}, where the orbits stay long times near the closed geodesics.

When $d = 1$, the flow $\chi_t$ describes a periodic motion on $\mathbb{T}^1$,  thus we obtain, as a particular case, the geodesic flow perturbed by a generic, periodic potential considered in, e.g., \cite{Mather,BolotinTreschev1999,DLS00,Kaloshin}.

\subsubsection{Flows on Lie groups.}
In this section, we describe  some other examples of external dynamical systems $(N,\chi_t)$ that  can be used in Theorem \ref{thm:main2}. These examples are mild enough so that the mechanisms in \cite{Ortega2006} do not apply but nevertheless there is no averaging theory for them.

Note that we can interpret the example in Subsection \ref{subsection:quasiperiodic} as a flow on a Lie group generated  by a left-invariant  vector field. Given a compact Lie group $N$,  recall that a left translation on $N$ is a map $L_g:N\to N$ given by $L_g(\theta)=g\theta$ for some $g\in N$. A vector field $X$ on $N$ is called left-invariant if $X$ is invariant with respect to all left-translations,  i.e.
$(L_g)_{*}(X)=X$ for all $g\in N$.  Let $\chi_t$ be the flow of $X$ on $N$. It is well known that each integral curve of $\chi_t$ is homeomorphic to the integral curve through the identity, that is,  $\chi_t(\theta)=\theta\cdot\chi_t(e)$.  The flow  $\chi_t$ is minimal if and only if $N$ is Abelian. However, every compact Abelian Lie group is a torus.  Thus, our example in  Subsection \ref{subsection:quasiperiodic} has a natural interpretation as a left-invariant flow on a compact, Abelian,  Lie group.

\subsubsection{Horocycle flows.}
Let $P$ be a compact connected manifold with a Riemannian metric of negative curvature, and  $\xi_t$ be the geodesic flow restricted to the unit tangent bundle $T^1P$. The horocyle flow is the unit speed flow on $T^1P$ whose orbits (referred as horocycles) are the strong stable manifolds $\{W^{ss}(z)\}$ of the geodesic flow
\[W^{ss}(z)=\{z'\in T^1M\,|\, \lim_{t\to\infty}d(\xi_t(z'),\xi_t(z))=0\}.\]

As a particular case, assume that $P$ is a surface of constant negative curvature. The universal covering space is the Poincar\'e upper half-plane
$\mathbb{H}$ with the  Poincar\'e  metric denoted $ds$. The geodesics in $\mathbb{H}$ are vertical lines and circles
orthogonal to the real axis, and  the horocycles are horizontal lines and circles
tangent to the boundary. The orientation-preserving isometries of $\mathbb{H}$ are the linear fractional transformations, i.e., the elements of \[PSL(2,\mathbb{R})=\{z\mapsto\frac{az+b}{cz+d}\,|\,ad-bc=1 \}.\]
We can identify $P$ as $PSL (2, \mathbb{R})/\Gamma$ where $\Gamma$ is a discrete co-compact subgroup\footnote{$\Gamma$ is co-compact if $PSL (2, \mathbb{R})/\Gamma$ is compact.}
of $PSL (2, \mathbb{R})$.    The geodesic flow is given by
\[\xi_t(z)=\left(
               \begin{array}{cc}
                 e^t & 0 \\
                 0 & e^{-t} \\
               \end{array}
             \right)z\Gamma
\]
and the horocyle flow
\[\chi_t(z)=\left(
               \begin{array}{cc}
                 1 & t \\
                 0 &1 \\
               \end{array}
             \right)z\Gamma.\]
When the manifold $P$ has  (variable)
negative curvature, the universal Riemannian covering surface is
the upper half-plane with metric $f^2ds$ where $f$ is a non-zero
function. Then $P$ can be regarded as the quotient
of this space via a discrete co-compact group of isometries.

The geodesic flow on the unit tangent bundle is Anosov. Hence the unit tangent bundle is foliated by foliated by stable and unstable manifolds. When $P$ is a surface, the stable and unstable manifolds are $1$-dimensional. The horocycle flow is the motion at unit speed along the stable/unstable manifolds.

Hedlund \cite{Hedlund} proved that the horocycle flow on a compact surface of constant negative curvature  is minimal. Furstenberg  \cite{Furstenberg73} showed that it is uniquely ergodic (i.e., it admits  a unique ergodic measure). B. Marcus proved the same result for  compact surfaces of variable negative curvature \cite{Marcus}. Results on the minimality of the horocycle flow in higher dimensions were obtained by Eberlein, see e.g. \cite{Eberlain}. We note that for surfaces the horocycle foliation is $C^{2-\eps}$ \cite{GerberW99} but in general not $C^2$. This is enough for our result as argued in Subsection \ref{subsection:regularity}.

Hence, an interesting class of examples of   external dynamical systems $(N,\chi)$ that can be used in Theorem \ref{thm:main1} comes from horocycle flows on  compact connected manifolds of negative curvature. We remark that  horocycle flows do not determine perturbations of a fixed frequency (as in the quasi-periodic case).

\subsubsection{Flows on homogeneous spaces.}
Let $G$ be a Lie group, $\Gamma$ a discrete subgroup, $g:\mathbb{R}\to
\Gamma$ a one-parameter subgroup, and $\chi:\mathbb{R}\times G/\Gamma
\to G/\Gamma$ be the $G$-induced flow, given by
$\chi_t(z\Gamma)=(g(t)z)\Gamma$.  $G/\Gamma$ with the $G$-induced flow
is a homogeneous space.  An element $g\in G$ is unipotent if the adjoint\footnote{The adjoint $\textrm{Ad}(g)$ of $g$ is the
  derivative of the map $\Psi_g:G\to G$, $z\to gzg^{-1}$ at $e$, i.e.,
  $\textrm{Ad}(g)=(d\Psi_g)_e:T_eG\to T_eG$, where $d$ is the
  differential and $T_eG$ is the tangent space at the origin $e$.}
$\textrm{Ad}(g)$ of $g$ is
a unipotent matrix, i.e., $1$ is its only eigenvalue. If $\chi_t$ is
unipotent for each $t$, then the flow $\chi$ is called a unipotent
flow.

Ratner classification theory \cite{Ratner91} asserts that, in the case
when $\Gamma$ is co-compact, and the flow $\chi$ is unipotent, if
$\chi$ is uniquely ergodic then it is minimal. If
$\textrm{Vol}(G/\Gamma)<\infty$, where $\textrm{Vol}$ denotes the Haar
measure induced by $\chi$, then the properties of minimality and
unique ergodicity are equivalent.

Hence, another interesting class of examples of   external dynamical
systems $(N,\chi)$ that can be used in Theorem \ref{thm:main1} are
unipotent flows on  homogeneous spaces of finite volume which are
uniquely ergodic. Some specific examples appear in, e.g.,
\cite{Starkov95,Mozes98,Weiss01}.

\subsubsection{Celestial mechanics and astrodynamics.}
We consider the Kepler problem, described by the Hamiltonian $H(y,x)=\frac{1}{2}|y|^2-\frac{1}{|x|}$.
The solutions are  conic sections or collision orbits. The regularized Keplerian motions for $H<0$ can be lifted to  trajectories of the geodesic flow on $S^2$.  Let us consider this problem as a model for the motion of a satellite around the Earth.
Following \cite{ConterasP02}, the set of $C^\infty$ Riemannian metrics on $S^2$
whose geodesic flow contains a non-trivial hyperbolic basic set is open and dense
in the $C^2$ topology. Thus, we can choose a  Riemannian metric that approximates the standard $S^2$-metric and for which there exists a hyperbolic closed geodesic with transverse homoclinic connection, satisfying conditions \textsl{A1}, \textsl{A2}. A  closed geodesic will correspond to a closed orbit around the Earth. The motion of Moon and Sun can be regarded as a  quasi-periodic forcing, as in \textsl{A3}. It seems possible that the method used to prove Theorem \ref{thm:main2}, which is constructive, can be adapted to this example in order to  design explicit maneuvers on how to  move, in specific ways, the satellite from one closed orbit to another around the Earth, or to  increase the size and shape of a satellite  orbit.
The fact that an approximation to the metric on $S^2$ is used in this argument, will result in some  errors  that can be corrected by low energy maneuvers.
Satellite trajectory repositioning is very useful in astrodynamics, see, e.g., \cite{Ocampo05}. A paper of related interest  is \cite{Albers12}.

\section{Geometric method}\label{section:geometric}

In this section we review briefly the geometric method to study
perturbations of the geodesic flow developed in \cite{DLS00}.
This method is based on the theory of normally hyperbolic
invariant manifolds and on the scattering map.
We will take the general setup from  \cite{DLS00} but we will make substantial modifications.

We consider the parameter-dependent Hamiltonian
$H:T^*M\times N\to\mathbb{R}$ given by  $H(x,y,\theta)=H_0(x,y)+V(x,\theta)$.
The Hamilton  equations and the parameter evolution equation  are
\begin{equation}\label{eqn:parham}
\begin{split}
\frac{dx}{dt}&=\frac{\partial H_0}{\partial y},\\
\frac{dy}{dt}&=-\frac{\partial  H_0}{\partial  x}-\frac{\partial V}{\partial  x},\\
\frac{d\theta}{dt}&=X(\theta).
\end{split}\end{equation}

We write the corresponding flow  $\psi:T^*M\times N\times\mathbb{R} \to T^*M\times N$ as \begin{equation}\label{eqn:extfl}
\psi_t(x,y,\theta)=(\geodesic_t(x,y,\theta), \chi_t(\theta)),
\end{equation}
where $\chi_t$ is the flow defined by the vector field $X$ on $N$.

\subsection{Normal hyperbolicity}
\label{subsection:normalhyper}
First we consider the unperturbed system, which is
described by the Hamiltonian $H_0$ given by \eqref{eqn:unpertham}.
Each energy manifold $\Sigma_E=\{(x,y)\,|\, H_0(x,y)=E\}$ is
invariant under the geodesic flow $\geodesic_t$ (now viewed  as the
Hamiltonian flow of $H_0$). We will denote by
$\xi_{E,t}=(\xi^x_{E,t} , \xi^y_{E,t} )$ a trajectory of the geodesic flow lying on
$\Sigma_E$. Due to the  rescaling property of the geodesic flow,
the assumptions \textsl{A1} and \textsl{A2} imply that for each
energy level $E$ there exists a periodic orbit $\lambda_E$ that is
hyperbolic in $\Sigma_E$, and a transverse homoclinic
orbit $\gamma_E$ to $\lambda_E$ in $\Sigma_E$.

We consider a sufficiently large initial energy level $E_*\geq 0$ (to be specified later in the argument),  and we
define the $2$-dimensional cylinder $\Lambda_0=\bigcup_{E\geq
E_*}\lambda_E$ in $T^*M$. Note that $\Lambda_0$ is a manifold with boundary but the flow is tangent to the boundary.
This is a normally hyperbolic invariant
manifold for the Hamiltonian flow on $T^*M$, whose stable and
unstable manifolds are given by $W^s(\Lambda_0)=\bigcup_{E\geq
E_*}W^s(\lambda_E)$ and $W^u(\Lambda_0)=\bigcup_{E\geq
E_*}W^u(\lambda_E)$, respectively.
The stable and unstable manifolds
of $\Lambda_0$ intersect transversally along the $2$-dimensional
homoclinic manifold $\Gamma_0=\bigcup_{E\geq E_*}\gamma_E$ in
$T^*M$.

\subsection{Scaled coordinates}
\label{subsection:scaled}
We now rescale the coordinates $(x,y)$  and
the time $t$ so that, for high energies,  the flow corresponding
to the rescaled Hamiltonian  is a small and slow perturbation of
the geodesic flow. For  $E_*$ sufficiently large
 we introduce a new parameter
$\eps=1/\sqrt{E_*}$; we note that $E_*\to\infty$ if  and only if
$\eps\to 0$.

The rescaled coordinates are $(q, p)$ defined by $ q=x$, $p=\eps y$, and the rescaled time $s$ is
given by $s=t/\eps$. The variable $\theta$ remains unchanged. The parameter-dependent Hamiltonian in these new
variables  is
\begin{equation}\label{eqn:hamrescaled}
    H_\eps(q,p, \theta)=H_0( q,p)+\eps^2V( q, \theta)=\eps^2 H(x,y,\theta).
\end{equation}

The corresponding Hamilton equations and the parameter evolution equation   are
\begin{equation}\label{eqn:hamrescaledeq}
    \begin{split}
\frac{dq}{ds}&=\frac{\partial H_0}{\partial p}\\
\frac{dp}{ds}&=-\frac{\partial  H_0}{\partial
 q}-\eps^2\frac{\partial V}
{\partial q}\\
\frac{d \theta}{ds}&=\eps X(\theta).
    \end{split}
\end{equation}
The corresponding flow $\psi^\eps_{s}$   is of the form $\psi^\eps_{s} =(\geodesic^\eps_{s}(
q, p, \theta), \chi^\eps_{s}(\theta))$.

When $\eps\to 0$, the flow $\chi^{\eps}_s$ approaches the constant flow $\textrm{id}_\theta$ on $N$, and the flow $\geodesic^\eps_{s}$ approaches the flow $\geodesic_{s}$ in the $C^{r-1}$ topology.
The limiting flow \[\psi_{s}=(\geodesic_{s}, \textrm{id}_{\theta})\]
on the extended phase space $T^*M\times N$ has a  normally hyperbolic invariant manifold
$\tilde\Lambda_0=\bigcup_{E\geq E_*}\lambda_E\times N$, since the exponential expansion  rates of $\geodesic_{s}$ are
larger than those of  $\textrm{id}_\theta$.  The stable and unstable manifolds of $\tilde \Lambda_0$ are
given by $W^s(\tilde\Lambda_0)=\bigcup_{E\geq
E_*}W^s(\lambda_E)\times N$ and $W^u(\tilde
\Lambda_0)=\bigcup_{E\geq E_*}W^u(\lambda_E)\times N$, respectively.
Obviously, $W^u(\tilde\Lambda_0)$ and $W^s(\tilde\Lambda_0)$  intersect
transversally along the $(2+d)$-dimensional homoclinic manifold
$\tilde\Gamma_0=\bigcup_{E\geq E_*}\gamma_E\times N$ in $T^*M\times
N$.

Now we refer to  the theory of normal hyperbolicity (see \cite{HPS,Fenichel74,Bates2000})
that  shows that the invariant manifolds, which were identified in the
limiting system, survive as locally invariant manifolds for the perturbed system.  A very explicit proof of this can be found in \cite{DLS06a}. The locally invariant
manifolds are exactly invariant for a modified system constructed explicitly in \cite{DLS06a}.
The modifications are supported on $E \le E_0$.
When we refer to stable and unstable manifolds,
we mean the stable and unstable manifolds of the extended
system.
Since we will be considering orbits whose energy stays large enough
(in particular in regions where the extended system agrees with the
original system, the orbits we construct will also be orbits of the original system.

\begin{rem}
When the system is differentiable enough and the perturbation is
periodic or Diophantine,  it is easy to remark \cite{DLS00} that
the invariant manifold will contain KAM tori that act as boundaries, so that the manifold
is in fact invariant. Of course, the argument above does not require any differentiability
and works for general perturbations.
\end{rem}

Since $\eps$
enters both in the size of the perturbation parameter and in the
time reparametrization,  in order to apply the standard normally
hyperbolicity theory one can rewrite the perturbed Hamiltonian as a
two parameter problem, with one parameter for the size of the
perturbation and the other for the time change, and prove the
persistence of the normally hyperbolic, locally invariant manifold from the unperturbed
case to the perturbed case for all small enough sizes of the
perturbation and uniformly in the time change parameter (see
\cite{DLS00,DLS06a}). In the end, it follows that, for all
sufficiently small $\eps$ (and, implicitly, for all sufficiently
large $E_*$) the manifold $\tilde\Lambda_0$ can be continued to  a
normally hyperbolic locally invariant manifold
$\tilde\Lambda_\eps=\Lambda_\eps\times N$ for the flow
$\psi^{\eps}_{s}$.

Moreover, the manifold $\tilde\Lambda_\eps$ depends smoothly on the parameter $\eps$, in the sense that
there exists a $C^{r-1}$-smooth parametrization
$\tilde k_\eps:\tilde\Lambda_0\to\tilde \Lambda_\eps$ of $\tilde \Lambda_\eps$, of the type $\tilde k_\eps=(k_\eps, \textrm{id}_{\theta})$, which depends $C^{r-2}$-smoothly on the parameter, such that
 $\tilde k_\eps(\tilde\Lambda_0)=\tilde\Lambda_\eps$ (and, in particular, $k_\eps(\Lambda_0)=\Lambda_\eps$).

In addition,  there exist stable and unstable
manifolds $W^u(\tilde\Lambda_\eps)$ and $W^s(\tilde\Lambda_\eps)$
that vary $C^{r-1}$-smoothly with $\eps$. Similarly, there exist $C^{r-1}$-smooth parametrizations $\tilde k^s_\eps :W^s(\tilde\Lambda_0)\to W^s(\tilde \Lambda_\eps)$, $ \tilde k^u_\eps:W^s(\tilde\Lambda_0)\to W^s(\tilde \Lambda_\eps)$ of the local stable and unstable manifolds of $\tilde\Lambda_\eps$, respectively, which agree with $\tilde k_\eps$ on $\tilde \Lambda_0$.

Since transversality is an open
condition, then $W^u(\tilde\Lambda_\eps)$ and $W^s(\tilde
\Lambda_\eps)$ intersect transversally along a locally unique
homoclinic manifold $\tilde\Gamma_\eps=\Gamma_\eps\times N$, for all
$\eps$ sufficiently small (and so  for all sufficiently
large~$E_*$).

\subsection{Action-angle coordinates}

For the unperturbed system, on the normally hyperbolic invariant manifold $\Lambda_0$ we can
put a system of action-angle coordinates $(J,\phi)$, where
the action coordinate is $J=\sqrt{2E}$, and the angle coordinate
$\phi\in\mathbb{T}^1$ is symplectically conjugate with $J$, i.e., $dJ\wedge d\phi= (dy\wedge dx)_{\mid \Lambda_0}$.
The unperturbed Hamiltonian is integrable on $\Lambda_0$,  and it takes the following form in  the action-angle coordinates
\begin{equation}\label{eqn:unpertcoordsdham}
    H_0(\phi,J)=\frac{1}{2}J^2.
\end{equation}
The unperturbed Hamiltonian flow on $\Lambda_0$ takes the form
$J(t)=J,\phi(t)=\phi_0+Jt$. Thus $\Lambda_0$ is
foliated by invariant tori $\mathcal{T}_{E}=\{(J,\phi)\,|\, J=\sqrt{2E}, \phi\in
\mathbb{T}^1\}$ corresponding to each energy level $H_0=E\geq E_*$.

For the perturbed system, the
action-angle coordinate system $( J, \phi)$ on
$\Lambda_0$  can  be continued via $k_\eps$  to an action-angle coordinate system $(
J_\eps, \phi_\eps)$ on
$\Lambda_\eps$. In these coordinates, the perturbed Hamiltonian function restricted to $\Lambda_\eps$ takes the form
\begin{eqnarray}\label{eqn:rescaledham}
    H_\eps(\phi_\eps , J_\eps,\theta )&=&
\frac{1}{2} J_\eps ^2+\eps^2
V(\phi_\eps , J_\eps,\theta ),
\end{eqnarray}
where $V(\phi_\eps ,
J _\eps,\theta )$ is obtained by expressing $V(\phi, J, \theta)$ in these coordinates.
The Hamilton  equations and the parameter evolution equation are:
\begin{equation}\label{eqn:actionanglesystem}
    \begin{split}
\frac{d\phi_\eps}{ds}&= J_\eps+\eps^2\frac{\partial   V}{\partial J_\eps}\\
\frac{d J_\eps}{ds}&=-\eps^2\frac{\partial  V}
{\partial\phi_\eps}\\
\frac{d\theta}{ds}&=\eps X(\theta).
    \end{split}
\end{equation}

Note that, in general,  the foliation of $\Lambda_0$  by invariant tori $\mathcal{T}_{E}$ in the unperturbed case does not survive to the perturbed case.  For a perturbation driven by a general external flow $\chi$ on $N$ we cannot apply the KAM theorem since the perturbation affecting the tori is not necessarily periodic/quasiperiodic. For the same reason, we cannot apply averaging theory as in \cite{GideaLlave06}, since we cannot obtain level sets of the action that remain almost-invariant for sufficiently long time.

\subsection{The scattering map}\label{subsection:scattering-general}

A key tool to study the dynamics of a normally hyperbolic invariant
manifold with a transverse
homoclinic manifold is the scattering map \cite{DLS00}.
The role of the scattering map
is to relate the past asymptotic trajectory of an orbit in the homoclinic
manifold to its future asymptotic trajectory. An extended study of
the scattering  map and its geometric properties can be found in \cite{DLS06b}.

Here we briefly recall the construction of the scattering map. We consider a  flow on some manifold $P$. Let $\Lambda$ be a normally hyperbolic
invariant manifold for the flow,  with the
unstable and stable manifolds $W^u(\Lambda)$ and $W^s(\Lambda)$ intersecting
transversally along a  homoclinic manifold $\Gamma$. This means that
$\Gamma\subseteq W^u(\Lambda) \cap W^s(\Lambda)$ and, for each $z\in \Gamma$, we have
\begin{equation}\begin{split}\label{scattering1}
T_zP=T_zW^u(\Lambda)+T_zW^s(\Lambda),\\
T_z\Gamma=T_zW^u(\Lambda)\cap T_zW^s(\Lambda).
\end{split} \end{equation}

By the normal hyperbolicity of $\Lambda$, $W^u(\Lambda)$ is foliated by $1$-dimensional fibers $W^u(z)$, $z\in\Lambda$, and
 $W^w(\Lambda)$ is foliated by $1$-dimensional fibers $W^w(z)$, $z\in\Lambda$.
For each $z\in W^u(\Lambda)$ there exists a unique $z^-\in \Lambda$ such that
$z\in W^u(z^-)$, and for each   $z\in W^s(\Lambda)$ there exists
a unique $z^+\in \Lambda$ such that $z\in W^s(z^+)$.
We define the  wave maps  \begin{equation}\label{eqn:wavemaps}\begin{split}
\Omega^{+}:W^s(\Lambda)\to \Lambda,\,
\Omega^{+}(z)=z^{+},\\ \Omega^{-}:W^u(\Lambda)\to \Lambda,
\Omega^{-}(z)=z^{-}.\end{split}\end{equation}  These maps are differentiable.

To define the scattering map, we
make the additional assumption  that for each $z\in \Gamma$ we
have \begin{equation}\begin{split}\label{scattering2}
 T_zW^s(\Lambda)=T_zW^s(z^+)\oplus T_z(\Gamma),\\
T_zW^u(\Lambda)=T_zW^u(z^-)\oplus T_z(\Gamma).
\end{split} \end{equation}

By restricting $\Gamma$ to some open subset of it, if necessary, we can ensure that the restrictions of $\Omega^\pm$ to $\Gamma$ are diffeomorphisms. We define the scattering map associated to
$\Gamma$  to be the diffeomorphism
$S =\Omega^+\circ (\Omega^-)^{-1}$ from
$U^-:= \Omega^- (\Gamma)$ in $\Lambda$ to  $U^+:=\Omega^+(\Gamma)$ in $\Lambda$.

As pointed out in \cite{DLS06b}, the scattering map can be defined in an analogous way in the case of
time-dependent systems. In this case, one considers a flow associated to a skew-product vector field $(Y(z,\theta), X(\theta))$ on a product manifold $P\times N$, where the skew product vector field is assumed to be close  to an autonomous vector field, i.e.,
\[\|Y(z,\theta)-Y_0(z)\|_{C^r}\ll 1\]
for some vector field $Y_0(z)$ on $N$.

Assume that there  exists a normally hyperbolic invariant manifold $\Lambda_0$ in $P$ for the flow of $Y_0(z)$. Let $S_0$ be the scattering map associated to a homoclinic channel $\Gamma_0$. Assuming that the exponential rates of the flow on $N$ are smaller than the exponential rates for the flow on $P$, it follows that  $\Lambda_0\times N$ is normally hyperbolic for the product flow of $(Y_0(z),X(\theta))$, that $\Gamma_0\times N$ is a homoclinic channel, and the corresponding scattering map $\tilde S_0$ is a product of the form $\tilde{S}_0(z,\theta)=(S_0(z), \theta)$.

By the  theory of normal hyperbolicity, there  exists a normally hyperbolic invariant manifold close to $\Lambda\times N$ in $P\times N$ for the flow of  $(Y(z,\theta), X(\theta))$, provided $Y(z,\theta)$ is sufficiently close to $Y_0(z)$, in the sense described above. There also exist a  homoclinic channel close to $\Gamma\times N$. The corresponding scattering map takes the skew product form $\tilde{S}(z,\theta)=(S(z,\theta), \theta)$.

\subsection{The scattering map for the unperturbed geodesic flow}
\label{subsection:scatterignunpert}
In the case of the unperturbed geodesic flow, described by the
Hamiltonian $H_0$, the condition \eqref{scattering1} is satisfied
due to the assumption \textsl{A2}, and the condition
\eqref{scattering2} follows from the fact that $ \Lambda_0$ is
foliated by the invariant tori   ${\mathcal{T}}_E$. The
scattering map preserves each of these invariant tori and it only
changes the phase $\phi$ along each  torus by an amount $a$ that is independent of the  torus.
The scattering map $S_0:U_0^-\to U_0^+$ is expressed relative to the $( J,  \phi)$-coordinates on
$\Lambda_0$  by
\begin{equation}\label{eqn:scatteringcoords}
    S_0( J^- , \phi^-)=(J^+,\phi^+)=(  J^-,\phi^-+a),
\end{equation}
where $J^-=J^+$ and  the phase shift $a= \phi^+- \phi^-$ is independent
of the point $z^-=z^-( \phi^-, J^- )$ in $U_0^-$, as it only depends on the
homoclinic manifold  $\Gamma_0$.

Remarkably, in this case the scattering map  can be  globally defined
as a continuous map on the whole of $ \Lambda_0$, as it has no monodromy (see \cite{DLS06b}). The continuation, however, is rather subtle because
when the base point moves along a non-contractible closed curve in $\Lambda$ the point of homoclinic intersection changes.
This causes, as observed already in \cite{DLS00} that after
perturbations, this global definition may be impossible.
When we continue the scattering map along a non-contractible closed curve we may end up with a different map.

\subsection{The scattering map for the perturbed geodesic flow}
\label{subsection:scatteringpert}
We now describe the scattering map for the system \eqref{eqn:hamrescaledeq}.
We stress that we use the notation $\,\tilde{}\,$ to denote the variables in
the extended system. That is, $\,\tilde{}\,$ refers to  adding the
extra variable  $\theta\in N$.

It is important to note that, if the dynamics in $N$ has
a small growth rate -- which  happens in our case for small
enough $\eps$ because the dynamics is given by the
vector field $\eps X$ -- then we can apply the theory of scattering map for non-autonomous systems recalled in Subsection \ref{subsection:scattering-general}.

As described in Subsection \ref{subsection:scaled}, $\tilde \Lambda_\eps = \Lambda_\eps  \times N$
is a normally hyperbolic invariant manifold for
the extended dynamics given by $\psi^\eps_{s}$.
Furthermore, $W^s(\tilde \Lambda_\eps ) = W^s(\Lambda_\eps) \times N$
is the stable manifold of $\tilde \Lambda_\eps $ under
the extended dynamics.
We also have that $W^s(z,\theta)  = W^s(z) \times \{\theta\}$.
Analogously for the unstable manifold.

Hence, if $\Gamma_\eps $ is a homoclinic  manifold satisfying
\eqref{scattering1} and \eqref{scattering2}, we see
that $\tilde \Gamma_\eps  = \Gamma_\eps  \times N$ will also
satisfy \eqref{scattering1} and \eqref{scattering2} in
the extended system.  The wave maps \eqref{eqn:wavemaps} associated to $\tilde \Gamma_\eps$ in the extended
system can be written as
\[\tilde\Omega^{\pm}_\eps(z, \theta) = (\Omega^{\pm}_\eps (z,\theta), \theta).\]

Therefore, if we can associate a scattering map
to $\Gamma_\eps $, we can also associate a scattering map
to $\tilde \Gamma_\eps $ and we have
\[\tilde S_\eps(z, \theta) = (S_\eps(z,\theta), \theta).\]
Note that the scattering map is the identity in the $N$ component.
For $\theta$ fixed, let $S_{\eps,\theta}$ denote the map given by $S_{\eps,\theta}(z)=S_\eps(z,\theta)$.

We can think of the scattering map either as the  mapping $\tilde S_\eps$, or
as a family of mappings $S_{\eps,\theta}$, indexed by the parameter $\theta$.
As proved in \cite{DLS06b}, the mappings $\tilde S_{\eps}$
and $S_{\eps,\theta}$ are smooth and depend
smoothly on parameters.

It is proved in \cite{DLS06b}, that
$S_{\eps,\theta}$ is symplectic as a  mapping on a domain in
$\Lambda_\eps$   if the flow is a time dependent symplectic flow.
(It is also proved in \cite{DLS06b} that $\Lambda_\eps$ is
a symplectic manifold.)

In our situation, one of the consequences of the smooth dependence on
parameters of $\tilde\Lambda_\eps$ and of its stable and unstable manifolds
is that we can find regions $U^\pm_0  \subset \Lambda_0$
which are independent of $\eps$,  for $0<\eps \le \eps_0 \ll 1$,
such that  $\tilde k_\eps(U^\pm_0\times N)\subseteq \tilde U^\pm_\eps$, where $\tilde k_\eps$ is the parametrization of $\tilde\Lambda_\eps$ described in Subsection~\ref{subsection:scaled}.

Via the  parametrization $\tilde k_\eps$, we can consider the scattering map $S_{\eps,\theta}$
as defined from $U^-_0$ to $U^+_0$, for $\theta\in N$.
That is, we can consider  the  scattering map as being defined between  domains
that are of product type and are independent of $\eps$ (of size of order $O(1)$).

\section{Elementary building blocks for the dynamics}
\label{sec:blocks}

In this section we construct some elementary building blocks of the dynamics. Each building block is a pseudo-orbit determined by one application of the scattering map followed by the application of the inner dynamics for some time.
The repeated construction of such  elementary  building blocks will produce  a two-dynamics pseudo-orbit which intersperses the
scattering map dynamics with the inner dynamics.

In Section \ref{section:constrwindows}, given  a sequence of elementary  building blocks,
we will  construct a sequence of windows which are correctly
aligned by the  dynamics. Afterwards, we will use the shadowing property of
correctly aligned windows from Subsection \ref{section:topological} to conclude the existence of a true orbit following
the sequence of elementary building blocks.

\subsection{The effect of the scattering map on the scaled energy} \label{subsection:scatteringaveraged}
The goal of this section
is to compute the change of the energy $H_\eps$ by one of the scattering maps when $\eps\in(0,\eps_0)$, for $\eps_0$ sufficiently small. Our goal will be to obtain estimates uniform in $\eps$.
The main observation is that the energy  is
a slow variable.

\subsubsection{Preliminaries}
We will be working with the scaled flow
\eqref{eqn:parham} in a fixed bounded range of scaled energies,
which we choose to be  $H_\eps\in[1,2]$. This determines a  compact subset $H_\eps^{-1}[1,2]\cap \tilde\Lambda_\eps$ in $\tilde\Lambda_\eps$.  Of course, obtaining estimates for all $\eps\in(0,\eps_0)$ corresponds to letting the  physical variables $(x,y)$ take values in a non-compact domain.

When we
will say that some error term is bounded by a constant (or by $O(\eps^a)$)
it will mean uniformly in that compact set.

Denote
\[
  \psi^\eps_{s}(\tilde z_\eps)
\equiv ( \geodesic^{\eps, q}_{s} (z_\eps), \geodesic^{\eps,p}_{s} (z_\eps),  \chi^{\eps}_{s}(\theta ))
\] the trajectory
of the   scaled Hamilton equations \eqref{eqn:hamrescaledeq}
with initial condition $\tilde z_\eps=(z_\eps,\theta)\in T^*M\times N$.

We have:
\begin{equation}\label{derivative-slow}
\begin{split}
\frac{d}{d s} H_\eps( \psi^\eps_{s}(\tilde z_\eps))
 &=
\eps^3\nabla_{\theta} V(\geodesic^{\eps,q}_{s} (z_\eps),  \chi^\eps_{s}(\theta ))\cdot X( \chi^\eps_{s}(\theta))\\
&\equiv
\eps^3 D_{X}V( \geodesic^{\eps,q}_{s} ( z_\eps),  \chi^\eps_{s}( \theta )).
\end{split}
\end{equation}

Here we regard $\frac{d}{ds} H_\eps$ as a functional
acting on the solution curves of the Hamiltonian equations \eqref{eqn:hamrescaledeq} in the extended phase space.

We will compute the leading term in $\eps$
of the change in energy by a scattering map in the perturbed equation
(see Subsection \ref{subsection:scatteringpert}).

In the rest of the subsection,
we will consider a fixed  homoclinic intersection $\tilde\Gamma_\eps$,
and we will denote by $\tilde S_\eps$ the corresponding scattering map.

The scattering map is defined from some domain
to some range in $\tilde\Lambda_\eps$.
Since the manifold $\tilde\Lambda_\eps$ depends on $\eps$, it is convenient
to reduce the scattering map  to some manifold which is independent  of $\eps$. We use that the manifold $\tilde\Lambda_\eps$
is diffeomorphic to the manifold $\tilde\Lambda_0$ via the parametrization $\tilde k_\eps$
mentioned in Subsection \ref{subsection:scaled}.
It is important to note that $\tilde k_\eps$ is $\eps$-close to the identity relative to the $C^r$-topology for $\eps$ small. We will use this fact below, when we compute the leading term in the expansion with respect to $\eps$ of the change of energy by the scattering map;
we will be able to approximate $\tilde k_\eps$  by the identity and incur only
error terms which are subdominant. Moreover,  $\tilde k_\eps$ can be chosen so that it is symplectic \cite{DLS06a}.

As $\Lambda_0$ is foliated by geodesics $\lambda_E$, $E\geq E_*$, we can parametrize
$\Lambda_0$ by the map $(E,s)\mapsto \lambda_{E}(s)$, where $E\geq E_*$ and the time $s$ is considered $\mod (1/\sqrt{2E})$,
i.e. $\lambda_{E}(s)=\lambda_{E}(s')$ if $s'-s\in (1/\sqrt{2E})\cdot\mathbb{Z}$.
Hence, we can parametrize  $\Lambda_\eps$ by $(E,s)\mapsto k_\eps(\lambda_{E}(s))$.

Thus, each points  $z_\eps\in \Lambda_\eps$ can be written as
$z_\eps= k_\eps(\lambda_{E}(s))$, for some unique ${E}$ and $s$.  Note that   $E$ and $s$  depend  on both the point
$z_\eps\in\Lambda_\eps$ and on the perturbation parameter $\eps$. Therefore, each point $\tilde z_\eps\in\tilde \Lambda_\eps$, with $\tilde z_\eps=(z_\eps,\theta)$ can be written as $\tilde z_\eps=\tilde k_\eps(\lambda_{E}(s))$, where $\tilde\lambda_{E}(s)=(\lambda_{E}(s),\theta)$.

Via the parametrization $\tilde k_\eps$, instead of $\tilde S_\eps$ we consider
the reduction $\tilde S^o_\eps=\tilde k_\eps^{-1} \circ \tilde S_\eps \circ \tilde k_\eps:\tilde k_\eps^{-1}(\tilde U_\eps^-)\to \tilde k_\eps^{-1}(\tilde U_\eps^+)$; note that the domain and codomain of $\tilde S^o_\eps$ are subsets  of  $\tilde \Lambda_0$. We note that when $\eps\to 0$ the scattering map $\tilde S^o_\eps$ approaches the unperturbed scattering map $S_0$ in the $C^{r-1}$ topology.

Since on $\Lambda_0$ we consider two coordinates systems, $(E,s)$ and $(J,\phi)$, we would like to make explicit the unperturbed scattering map in both coordinates. Given $S_0(z^-)=z^+$, in the action-angle coordinates, if $z^-=(J^-,\phi^-)$, $z^+=(J^+,\phi^+)$, then $J^-=J^+$ and $\phi^-+a=\phi^+$, and  in the energy-time coordinates, if $z^-=(E^-,s^-)$, $E^+=(E^+,s^+)$, then $E^-=E^+$ and $s^-+a/\sqrt{2E}=s^+$.

\subsubsection{The effect of the scattering map on the scaled energy} The first goal of this subsection is getting   quantitative
estimates on the change of scaled energy achieved by the scattering map:
\begin{equation} \label{Deltadefined}
\Delta(\tilde z^-_\eps) :=H_\eps(\tilde z^+_\eps)-H_\eps(\tilde z^-_\eps) = H_\eps(\tilde S_\eps(\tilde z^-_\eps))-H_\eps(\tilde z^-_\eps).
\end{equation}
We will write  each point $\tilde z_\eps^-$ as $(k_\eps(E,s),\theta)$, for some $E,s,\theta$.  We will express the leading term of the expansion of   $\Delta(\tilde z^-_\eps)$ with respect to $\eps$ in terms of the unperturbed system, and specifically in terms of the variables $(E,s,\theta)$.

\begin{prop}\label{prop:energyscattering} Let $\tilde z^-_\eps=(k_\eps(\lambda_{E}(s)),\theta)\in\tilde U_\eps^-$ and let $\tilde z_\eps^+=\tilde S_\eps (\tilde z_\eps^-)$. The change  of the scaled energy $H_\eps$ by the scattering map from $\tilde z_\eps^-$ to $\tilde z_\eps^+$ is given by
\begin{equation}\label{eqn:energyscattering}
\Delta(\tilde z^-_\eps)=\eps^3\Delta_1(E,s,\theta)+O(\eps|\ln\eps^4|),
\end{equation}
where  the leading term $\Delta_1$  in the expansion with respect to $\eps$ is given by
\begin{equation}\label{eqn:delta1}
\begin{split} \Delta_1(E,s,\theta)=  \lim_{T_\pm- \to \pm\infty} &\int_{T_-}^{T_+} \,
\left( D_XV \right )(\gamma^{q}_{E}(\sigma),\theta)d\sigma\\
&-\int_{0}^{T_+}\left( D_XV\right )( \lambda^{q}_{E}(\sigma+s+a/\sqrt{2E}),\theta)d\sigma\\
&-\int_{T_-}^{0}\left( D_XV\right )( \lambda^{q}_{E}(\sigma+s),\theta)d\sigma.\end{split}
\end{equation}
\end{prop}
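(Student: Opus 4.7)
The plan is to exploit the fact that $H_\eps$ is a slow variable: by \eqref{derivative-slow}, its variation along any orbit over a time interval of length $T$ is at most $O(\eps^3 T)$. I would first pick the homoclinic point $z \in \tilde\Gamma_\eps$ with $\tilde\Omega_\eps^-(z) = \tilde z_\eps^-$, so that $\tilde\Omega_\eps^+(z) = \tilde z_\eps^+ = \tilde S_\eps(\tilde z_\eps^-)$ by definition of the scattering map. By normal hyperbolicity, $\psi^\eps_\sigma(z)$ is backward-asymptotic to $\psi^\eps_\sigma(\tilde z_\eps^-)$ and forward-asymptotic to $\psi^\eps_\sigma(\tilde z_\eps^+)$ at some exponential rate $\lambda > 0$ uniform in small $\eps$. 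I would then fix truncation times $T_\pm = \pm C|\ln\eps|$ with $C > 0$ a large constant to be chosen.

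Telescoping $H_\eps(\tilde z_\eps^+) - H_\eps(\tilde z_\eps^-)$ through the intermediate points $\psi^\eps_{T_+}(\tilde z_\eps^+)$, $\psi^\eps_{T_+}(z)$, $\psi^\eps_{T_-}(z)$, $\psi^\eps_{T_-}(\tilde z_\eps^-)$, and applying \eqref{derivative-slow} to each consecutive pair that lies on the same orbit, gives
\begin{multline*}
\Delta(\tilde z_\eps^-) = \eps^3\int_{T_-}^{T_+} D_X V(\psi^\eps_s(z))\,ds - \eps^3\int_0^{T_+} D_X V(\psi^\eps_s(\tilde z_\eps^+))\,ds \\
- \eps^3\int_{T_-}^0 D_X V(\psi^\eps_s(\tilde z_\eps^-))\,ds + \mathcal{R},
\end{multline*}
where the remainder $\mathcal{R}$ collects the two boundary differences $H_\eps(\psi^\eps_{T_\pm}(z)) - H_\eps(\psi^\eps_{T_\pm}(\tilde z_\eps^\pm))$. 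Since $H_\eps$ is uniformly Lipschitz on the compact slice $H_\eps^{-1}[1,2] \cap \tilde\Lambda_\eps$ and the two pairs of points are $O(e^{-\lambda|T_\pm|}) = O(\eps^{\lambda C})$ apart, $\mathcal{R}$ is subdominant to $\eps^3$ once $C$ is chosen large enough.

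Next I would replace each perturbed quantity in the integrands by its unperturbed counterpart. The parametrization $\tilde k_\eps$ is $O(\eps)$-close to the identity in $C^r$; the drift $\chi^\eps_s(\theta) - \theta$ is $O(\eps|\ln\eps|)$ for $s \in [T_-, T_+]$; and by Gronwall the deviation between $\psi^\eps_s$ and the unperturbed geodesic flow with $\theta$ frozen, on this interval, is a small power of $\eps$ times a polynomial in $|\ln\eps|$. Consequently $\psi^\eps_s(z)$ may be replaced by the value of the unperturbed geodesic flow along the homoclinic orbit $(\gamma_E(\sigma),\theta)$, and $\psi^\eps_s(\tilde z_\eps^\pm)$ by $(\lambda_E(\sigma + s + \delta^\pm),\theta)$, with offset $\delta^- = 0$ for the backward integral and $\delta^+ = a/\sqrt{2E}$ for the forward integral, as dictated by the unperturbed scattering formula \eqref{eqn:scatteringcoords}. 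Each replacement error, combined with the $\eps^3$ prefactor and the $O(|\ln\eps|)$ integration length, contributes a subleading term of order $\eps^3$ times small positive powers of $\eps$ and polynomial powers of $|\ln\eps|$.

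Finally, letting $T_\pm \to \pm\infty$ the limit exists because $\gamma_E$ is bi-asymptotic to $\lambda_E$, so the difference of integrands decays exponentially in $|\sigma|$, making the bracketed combination in \eqref{eqn:delta1} absolutely integrable; this yields the stated formula. The main obstacle is balancing the two competing error sources: $\mathcal{R}$ demands $C$ large, while the Gronwall estimates degrade with $|T_\pm|$. Nevertheless there is a window of admissible $C$ for which both errors fit within the stated bound. A delicate secondary bookkeeping task is to pin down the correct parametrizations of $\gamma_E$ and $\lambda_E$ and the precise phase offsets $\delta^\pm$ so that the final cancellation produces \eqref{eqn:delta1}; this relies on the unperturbed scattering map being a pure phase translation by $a$ at constant $J$.
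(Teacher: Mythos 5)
Your proposal is correct and follows essentially the same route as the paper: introduce the homoclinic point $\tilde z_\eps$, express the energy change via the fundamental theorem of calculus along the homoclinic and the two asymptotic orbits, truncate at times of order $|\ln\eps|$ using exponential convergence (your explicit boundary remainder $\mathcal{R}$ is just the finite-time version of the paper's limit identity), replace the perturbed orbits by unperturbed geodesics with $\theta$ frozen via Gronwall-type closeness, and use the $\eps^3$ prefactor of $\frac{d}{ds}H_\eps$ to make all replacement and truncation errors subdominant. The only cosmetic difference is that the paper first writes the exact limit formula and then prunes it, whereas you telescope at finite times and bound the boundary terms by Lipschitz continuity plus exponential closeness; the estimates and bookkeeping are otherwise the same.
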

\begin{proof}
Let $\tilde z_\eps=(\tilde\Omega^{-})^{-1}(\tilde z^-_\eps)=(\tilde\Omega^{+})^{-1}(\tilde z^+_\eps)\in\tilde \Gamma_\eps$. We start by noting that the orbit starting in $\tilde z_\eps$ is asymptotic in the future
to the orbit  of $\tilde z^+_\eps$ and in the past  to the orbit of $\tilde z^-_\eps$.
We can write
\begin{equation}
\begin{split}
\Delta(\tilde z^-_\eps) =
\lim_{T_\pm \to \pm \infty} &\left [
H_\eps(\psi^\eps_{T_+} (\tilde z_\eps)) -
H_\eps(\psi^\eps_{T_- } (\tilde z_\eps)) \right. \\
& - H_\eps(\psi^\eps_{T_+}(\tilde z^+_\eps)
+ H_\eps(\tilde z^+_\eps)  \\
& \left .+ H_\eps(\psi^\eps_{T_-}(\tilde z^-_\eps)
- H_\eps(\tilde z^-_\eps) \right]. \\
\end{split}
\end{equation}

By the Fundamental Theorem of Calculus, we have
\begin{equation}
\label{deltadefined}
\begin{split}
\Delta(\tilde z^-_\eps)
&=\lim_{{T_+\to\infty}\atop{T_-\to -\infty}}\left [\int_{T_-}^{T_+}
\left( \frac{d}{d \sigma}H_\eps \right )(\psi^\eps_{\sigma}(\tilde z_\eps))d\sigma\right. \\
&  -\int_{0}^{T_+}\left( \frac{d}{d\sigma} H_\eps\right)
(\psi^\eps_{\sigma}(\tilde z^+_\eps))d \sigma\\
&\left . -\int_{T_-}^{0}\left( \frac{dH_\eps}{d\sigma}\right)(\psi^\eps_{\sigma}(\tilde z^-_\eps))d\sigma\right ]\\
= &\lim_{T_+ \to \infty}\int_{0}^{T_+}\left[ \left( \frac{d}{d\sigma}H_\eps \right )(\psi^\eps_{\sigma}(\tilde z_\eps))
- \left( \frac{d}{d\sigma}H_\eps \right )(\psi^\eps_{\sigma}(\tilde z^+_\eps))\right]d\sigma\\
&+ \lim_{T_- \to - \infty}\int_{T_-}^{0}
\left[\left( \frac{d}{d\sigma}H_\eps \right )(\psi^\eps_{\sigma}(\tilde z_\eps))
- \left( \frac{d}{d\sigma}H_\eps \right )(\psi^\eps_{\sigma}(\tilde z^-_\eps))\right]d\sigma.\\
\end{split}
\end{equation}

It is important to remark that the integrands in the integrals
\eqref{deltadefined} converge exponentially fast. So do their
their derivatives of low order with respect to the initial conditions.
It is shown in \cite{DLS06b} that there is exponential
convergence of the integrand in \eqref{deltadefined} for the derivatives of an order lower than the ratio of
the  Lyapunov exponents in the stable and unstable directions
and the Lyapunov exponents tangent to the manifold.
In our case, since the directions
tangent to the manifold have zero exponent for $\eps = 0$, one can get
that the number of derivatives of the integrand that converge exponentially fast is arbitrarily large for $\eps$
small. In the arguments presented in this
paper, we will need just a moderate number of derivatives.

Our next goal is to prune \eqref{deltadefined} to extract a convenient
expression for the leading term.

By the exponential convergence of the integrands, we see that
for appropriately chosen constant $K> 0$, if
we take $T_+ = -T_- =  K|\ln(\eps)|$, the integrals
differ from the limit by not more than $O(\eps^4)$. Therefore,
\begin{equation}
\label{firststep}
\begin{split}
\Delta (\tilde z^-_\eps)&= \int_{- K | \ln(\eps)| }^{K |\ln(\eps)|}
\left( \frac{d}{d\sigma}H_\eps \right)( \psi^\eps_{\sigma}(\tilde z_\eps))d \sigma\\
&  -\int_{0}^{K |\ln(\eps)| }\left(
\frac{d}{d\sigma}H_\eps \right)
( \psi^\eps_{\sigma}(\tilde z^+_\eps))d\sigma\\
&  -\int_{-K| \ln(\eps)| }^{0}
\left( \frac{d}{d \sigma}H_\eps \right )( \psi^\eps_{ \sigma}(\tilde z^-_\eps))d \sigma + O(\eps^4).
\end{split}
\end{equation}

We now express $\Delta (\tilde z_\eps^-)$ in terms of the orbits of the unpertubed flow.
By the smooth
dependence on parameters of solutions of
ordinary differential equations, for $|\sigma| \le K | \ln(\eps)| $,
we have that the orbits of the perturbed system  are $O(\eps | \ln(\eps)|)$-close
to the corresponding orbits of the unperturbed flow.
Notice that, because $\eps | \ln(\eps)|$ is small, the
separation between the orbits is still growing linearly and
has not yet started to grow exponentially fast with time
\cite[estimate (3.5.4)]{Thirring}.

More precisely,  the orbits
$\geodesic^\eps_{\sigma}(\tilde z^\pm_\eps)$ are
$O(\eps |\ln(\eps)|)$-close
to the orbit $(\lambda_{E},\chi^\eps)$ where $\lambda_{E}$ is a  closed geodesic, and  the orbit $ \geodesic^\eps_{\sigma} (\tilde z_\eps)$ is
 $O(\eps | \ln(\eps)|)$-close
to $(\gamma_{E},\chi^\eps)$, where  $\gamma_{E}$ is a homoclinic orbit to $\lambda_{E}$ for the geodesic flow.
Also,  in these intervals of time, the variable
$\theta$ changes only by $O(\eps|\ln(\eps)|)$.
Thus,  for $|\sigma| \le K |\ln(\eps)| $, we have
\[
\begin{split}
& d(  \psi^\eps_{\sigma}(\tilde z_\eps), \tilde \gamma_{E}(\sigma) )
\le C\eps|\ln(\eps)| \\
& d(  \psi^\eps_{\sigma}(\tilde z^-_\eps), \tilde \lambda_{E}(\sigma+s))
\le C\eps|\ln(\eps)|,\\
& d(  \psi^\eps_{\sigma}(\tilde z^+_\eps), \tilde \lambda_{E}(\sigma+s+a/\sqrt{2E}))
\le C\eps|\ln(\eps)|,\\
\end{split}
\]
for some constant $C>0$, where $\tilde \gamma_{E}(\sigma) =(\gamma_{E}(\sigma),\theta)$ and
$\tilde \lambda_{E}(\sigma) =(\lambda_{E}(\sigma),\theta)$.

Therefore, substituting in the integral in \eqref{firststep}  the orbits of
the geodesic flow instead of the orbits of the perturbed flow and keeping $\theta$ constant,
 we incur an error $O(\eps^4|\ln(\eps)|)$. Using also the notation $D_XV$ introduced in
\eqref{derivative-slow}, we obtain:

\begin{equation}
\begin{split}
\Delta(\tilde z^-_\eps)&=
\int_{- K | \ln(\eps)| }^{K |\ln(\eps)|}
\left( \frac{d}{d\sigma} H_\eps \right)(\tilde \gamma_{E} (\sigma) )d\sigma\\
&  -\int_{0}^{K |\ln(\eps)| }\left( \frac{d}{d\sigma}H_\eps\right)
(\tilde \lambda_{E} (\sigma+s+a/\sqrt{2E}) ))d\sigma\\
& -\int_{-K | \ln(\eps)|}^{0}\left( \frac{d}{dt} H_\eps \right)
(\tilde \lambda_{E}(\sigma+s) )d\sigma +  O(\eps^4 |\ln(\eps)|) \\
&=\eps^3
\int_{- K | \ln(\eps)| }^{K |\ln(\eps)|}
 \left( D_XV \right)( \gamma^{q} _{E}(\sigma),  \theta )d\sigma\\
&  - \eps^3 \int_{0}^{K |\ln(\eps)| } \left( D_XV  \right)
( \lambda^{q} _{E}(\sigma+s+a/\sqrt{2E}),  \theta)) d\sigma\\
& -\eps^3 \int_{-K | \ln(\eps)|}^{0}
\left( D_XV \right)(  \lambda^{q} _{E}(\sigma+s),  \theta)d\sigma +  O(\eps^4 |\ln(\eps)|),
\end{split}
\end{equation}
where $ \lambda^{q}_{E}, \gamma^{q}_{E}$ denote the $q$-components of $\lambda _{E}, \gamma _{E}$, respectively.

Finally, because of the exponentially fast convergence
we can change the integral  over a $O(\ln(\eps))$ time interval
as $t\to\infty$ incurring an error $O(\eps^4)$, so
we obtain:
\begin{equation}
\label{finalcondition}
\begin{split}
\Delta(\tilde z^-_\eps)=&
\eps^3\lim_{{T_+\to\infty}\atop{T_-\to\infty}}\left[
\int_{T_-}^{T_+}
\left( D_XV\right )(\gamma^{q}_{E}(\sigma),\theta)d\sigma\right .\\
&\quad  -\int_{0}^{T_+} \left( D_XV \right)(\lambda^{q}_{E}(\sigma+s+a/\sqrt{2E}),\theta)d\sigma\\
&\quad  \left . -\int_{T_-}^{0}
\left(D_XV \right)(\lambda^{q}_{E}(\sigma+s),\theta)d\sigma\right ] \\
&\quad+O(\eps^4|\ln\eps|)\\
= & \eps^3 \left[
\lim_{T_+ \to \infty} \int_0^{T_+}  \,
\left( D_XV \right )(\gamma^{q}_{E}(\sigma),\theta)
- \left(D_XV \right )(\lambda^{q}_{E}(\sigma+s+a/\sqrt{2E}),\theta)d\sigma \right.  \\
&\phantom{ = \eps^3 \big[}
+\left.\lim_{T_- \to -\infty} \int_{T_-}^{0} \,
\left( D_XV \right )(\gamma^{q}_{E}(\sigma),\theta)
- \left( D_XV\right )( \lambda^{q}_{E}(\sigma+s),\theta)d\sigma\right]
\\&+O(\eps^4|\ln\eps|)\\
=&
\eps^3\Delta_1(E,s,\theta)
+ O( \eps^4 | \ln(\eps)|).
\end{split}
\end{equation}
In the last line we have just defined $\Delta_1=\Delta_1(E,s,\theta)$
as the leading term of $\Delta=\Delta(\tilde z^-_\eps)$, where $\tilde z^-_\eps=(k_\eps(\lambda_{E}(s)),\theta)\in \tilde U^-_\eps$.
\end{proof}

Notice that in the above argument we used the exponential convergence
of the integrands to justify the change of the limits, which  is often
done in Melnikov theory.

It is important to realize that the expression for  the leading term
$\Delta_1$ is in terms of  the unperturbed trajectories
and depends only on the perturbing potential.
It can be considered as a global Melnikov function. In contrast with
many standard treatments in which the Melnikov function is only defined
for periodic or quasi-periodic orbits, \eqref{finalcondition} is well defined for all
orbits in the domain of the scattering map independently of what is
their dynamics. Also, note that the function $\Delta_1$ can be viewed as an  analogue of what was called the
reduced Poincar\'e function in \cite{DLS00,DLS06a}.

\subsubsection{The effect of the scattering map on the action-angle coordinates}
Using action angle coordinates, we write the geodesic $\lambda_{E}(s)$
as $\lambda_{J^2/2}(s)$ and the  homoclinic
$\gamma_{E}(s)$ as $\gamma_{J^2/2}(s)$, where $E=J^2/2$.
By the  rescaling property  of the geodesic flow \eqref{eq:scale2} we have that
$\lambda_{J^2/2}(s)= \lambda_{1}(J s)$ and  $\gamma_{J^2/2}(s)=
\gamma_{1}(J s)$. Note that for an energy $E=1$ the corresponding action is $J=\sqrt{2}$. By the change of variable formula we obtain the
following

\[\begin{split}  \Delta_1(J,\phi,\theta)=&
\lim_{T_+ \to \infty} \int_0^{T_+}
[(D_XV)(\gamma^q_{J^2/2}(\sigma),\theta)-(D_XV)(\lambda^q_{J^2/2}(\sigma+\phi+a),\theta)]d\sigma\\
&+\lim_{T_- \to -\infty} \int_{T_-}^{0}
[(D_XV)(\gamma^q_{J^2/2}(\sigma),\theta)-(D_XV)(\lambda^q_{J^2/2}(\sigma+\phi),\theta)]d\sigma\\
&=\lim_{T_+ \to \infty} \int_0^{T_+}  [(D_XV)(\gamma^q_{1}(J \sigma),\theta)-(D_XV)(\lambda^q_{1}(J(
\sigma+\phi+a)),\theta)]d\sigma\\
&+\lim_{T_- \to -\infty} \int_{T_-}^{0}
[(D_XV)(\gamma^q_{1}(J \sigma),\theta)-(D_XV)(\lambda^q_{1}(J (\sigma+\phi)),\theta)]d\sigma\\
&=\frac{1}{J}\Delta_1(\sqrt{2},J\phi,\theta).
\end{split}\]
Thus, we have the following rescaling property of $\Delta_1$ relative
to action-angle coordinates:
\begin{equation}\label{scale-delta1}
\Delta_1(J, \phi,  \theta) = \frac{1}{J}\Delta_1(\sqrt{2}, J\phi,  \theta).
\end{equation}

\subsection{Change of energy over an elementary building block of a pseudo-orbit. }\label{subsec:buildingblock}

The goal of this section is to compute the change of energy over an elementary
building block of a pseudo-orbit, which consists in applying the scattering map followed by applying the inner dynamics
for some prescribed time. We will also formulate later some conditions that ensure
that the effect on the energy is non-trivial.  Of course,
this requires some choices (e.g., the time  we decide to follow the inner dynamics),  and, given the choices made,
some non-degeneracy assumptions
on the perturbations (e.g., a perturbation that vanishes identically will
not produce any effect).

We first compute the energy change over an elementary building block  obtained by starting at a point $\tilde z^-_\eps=(z_\eps,\theta)$,   applying  the scattering map, and then applying the inner dynamics for some time. We specify the  time for which we apply the inner dynamics implicitly by requiring  that the change of angle coordinate along the pseudo-orbit starting from $\tilde z^-_\eps$ is some fixed number $L$, chosen sufficiently large, to be specified later. We denote such an elementary building block by $B(\tilde z_\eps^-)$.

As specified before, we focus on a bounded energy range $H_\eps\in[1,2]$.  From Subsection \ref{subsection:scatteringaveraged}, the leading term in the energy change \eqref{finalcondition} depends on the effect of the perturbing potential on the unperturbed trajectories. Let $\tilde z^-_\eps=(k_\eps(\lambda_{E}(s)),\theta)$, with $E$ in the energy range.
Assume that the angle-action coordinates of $\lambda_{E}(s)\in \Lambda_0$ are $(J,\phi)$, with $J=\sqrt{2E}$.

By \eqref{eqn:scatteringcoords} the effect of the unperturbed  scattering map on $\Lambda_0$, taking a point $z^-$ to $z^+$, is to increase the angle coordinate $\phi$ by $a$. The scaled time $s$ to follow the inner dynamics starting from $z^+$ and ending at a point of angle coordinate equal to $L$ is  $(L - a)/\sqrt{2E}$. Hence, in terms of the unperturbed system, one follows the geodesic flow trajectory $\lambda_{E}(s)$ for the time interval $s\in[a/\sqrt{2E},L/\sqrt{2E}]$.

In the perturbed system, we choose to follow the inner dynamics for the same time interval, which
is  independent of $\eps$. Since the energy is a  slow variable and we are considering only
scaled times of order $1$,   the change of energy during the
time spent along  the inner dynamics can be computed, with a very small error, using the fundamental theorem
of calculus.

This implies that the change of energy along an orbit segment starting at some point $\tilde z^+_\eps=(z^+_\eps,\theta^+)$, where $\theta^+=\chi^\eps_{a/\sqrt{2E}}(\theta)$,   and following it for a time interval $s\in[a/\sqrt{2E},L/\sqrt{2E}]$ is
\begin{equation*}\begin{split}\int_{a/\sqrt{2E}}^{L/\sqrt{2E}} \frac{d}{dt}H_\eps(\geodesic^{\eps}_{\sigma}( z^+_\eps), \chi^\eps _{\sigma}( \theta))d\sigma=\eps^3\int_{a/\sqrt{2E}}^{L/\sqrt{2E}} \left( D_XV \right)( \lambda^{q}_{E}(\sigma), \theta) d\sigma\\+O(\eps^4|\ln\eps|).\end{split}\end{equation*}

\begin{prop}\label{prop:energyblock}
Let $\tilde z^-_\eps=(k_\eps(\lambda_{E}(s),\theta)$, and $\tilde S_\eps(\tilde z^-_\eps)=\tilde z^+_\eps$.
Consider an elementary building block consisting of one application of the scattering map $\tilde S_\eps(\tilde z^-_\eps)=\tilde z^+_\eps$, and a trajectory segment  with initial point $\tilde z^+_\eps$ following the inner dynamics  for a  time interval $s\in[a/\sqrt{2E},L/\sqrt{2E}]$.
 The change $G(\tilde z_\eps^-)$ of scaled energy $H_\eps$ over the building block  is of the form
\begin{equation}\label{G}
     G(\tilde z_\eps^-)  =\eps^3  G_1(E,p,s) + O(\eps^4|\ln\eps|),
\end{equation}
where $G_1$, the leading term of $G$, is given by
\begin{equation}
\label{G1}\begin{split}
G_1(E,p,s)&=\lim_{{T_+\to\infty}\atop{T_-\to\infty}} \left [\int_{T_-}^{T_+}
\left( D_XV  \right )( \gamma^{ q}_{E}(s), \theta)d\sigma\right .\\
& -\int_{0}^{ T_+}\left(D_XV  \right)( \lambda^{\ q}_{E}(\sigma+s+a/\sqrt{2E}), \theta)d\sigma\\
&-\int_{T_-}^{0}\left(D_XV \right)( \lambda^{ q}_{E}(\sigma+s), \theta)d\sigma
\\
 &\left. + \int_{a/\sqrt{2E}}^{L/\sqrt{2E}}
\left(D_XV  \right)( \lambda^{q}_{E}(\sigma), \theta) d\sigma
\right ].
\end{split}
\end{equation}
\end{prop}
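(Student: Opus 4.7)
The proposition is an additive decomposition: the energy change over the block is the change produced by the scattering map plus the change produced by the subsequent inner segment, and Proposition~\ref{prop:energyscattering} already handles the first piece. My plan is to make this decomposition precise, compute each piece to the required order in $\eps$, and check that the two remainders combine into $O(\eps^4|\ln\eps|)$.

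Concretely, I would begin by writing
\begin{equation*}
G(\tilde z^-_\eps) = \bigl[H_\eps(\tilde z^+_\eps)-H_\eps(\tilde z^-_\eps)\bigr] + \bigl[H_\eps(\psi^\eps_{(L-a)/\sqrt{2E}}(\tilde z^+_\eps))-H_\eps(\tilde z^+_\eps)\bigr],
\end{equation*}
where the first bracket is exactly $\Delta(\tilde z^-_\eps)$ from \eqref{Deltadefined}. Proposition~\ref{prop:energyscattering} supplies the expansion of this bracket as $\eps^3\Delta_1(E,s,\theta)+O(\eps^4|\ln\eps|)$, which accounts for the first three lines of \eqref{G1}.

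For the second bracket I would use the Fundamental Theorem of Calculus together with the slow-variable identity \eqref{derivative-slow}, giving
\begin{equation*}
H_\eps(\psi^\eps_{(L-a)/\sqrt{2E}}(\tilde z^+_\eps))-H_\eps(\tilde z^+_\eps) = \eps^3\int_0^{(L-a)/\sqrt{2E}} (D_XV)\bigl(\geodesic^{\eps,q}_\sigma(z^+_\eps),\chi^\eps_\sigma(\theta)\bigr)\,d\sigma.
\end{equation*}
Because the interval of integration has length $O(1)$ and is independent of $\eps$, standard smooth dependence on initial data and parameters yields $\geodesic^{\eps,q}_\sigma(z^+_\eps)=\lambda^q_E(\sigma+a/\sqrt{2E})+O(\eps)$ and $\chi^\eps_\sigma(\theta)=\theta+O(\eps)$ uniformly on this compact time range. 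Substituting the unperturbed geodesic and the fixed parameter $\theta$ into the integrand introduces an $O(\eps)$ error inside, which, multiplied by the prefactor $\eps^3$ and by the $O(1)$ length of integration, amounts to at most $O(\eps^4)$. After the change of variable $\tau=\sigma+a/\sqrt{2E}$, the remaining leading term matches exactly the fourth line of \eqref{G1}.

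Summing the two pieces gives the claimed formula. The only compatibility issue is the size of the remainders: the scattering piece carries an $O(\eps^4|\ln\eps|)$ correction stemming from truncating the Melnikov integrals at time $K|\ln\eps|$, while the inner piece carries the milder $O(\eps^4)$ from $C^1$ ODE-dependence on a compact interval; the logarithmic term dominates and we recover $O(\eps^4|\ln\eps|)$, as stated. I do not expect any real obstacle here, since the hard analytic work (the exponentially convergent integrals, the replacement of perturbed orbits by unperturbed ones on intervals $|\sigma|\le K|\ln\eps|$, and the symplectic smoothness of $\tilde k_\eps$) was already carried out in the proof of Proposition~\ref{prop:energyscattering}; the present statement is essentially a bookkeeping exercise combining that proposition with a short elementary estimate for the inner segment.
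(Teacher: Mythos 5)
Your proposal is correct and follows essentially the same route as the paper: the paper likewise splits $G(\tilde z^-_\eps)$ into $\Delta(\tilde z^-_\eps)$ from Proposition~\ref{prop:energyscattering} plus the inner-segment contribution, computes the latter by the Fundamental Theorem of Calculus with \eqref{derivative-slow} and replacement of the perturbed orbit by the unperturbed geodesic at fixed $\theta$, and absorbs the errors into $O(\eps^4|\ln\eps|)$. Your observation that the inner piece only contributes $O(\eps^4)$ is a slight sharpening of the paper's bookkeeping but changes nothing in the conclusion.
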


\begin{proof}
By the fundamental theorem of calculus, the change of energy along an orbit segment starting at  $\tilde z^+_\eps$ and following it for a time interval $s\in[a/\sqrt{2E},L/\sqrt{2E}]$ is
\begin{equation*}\begin{split}\int_{a/\sqrt{2E}}^{L/\sqrt{2E}} \frac{d}{dt}H_\eps(\geodesic^{\eps}_{s}( z^+_\eps), \chi^\eps _{s}( \theta))ds=\eps^3\int_{a/\sqrt{2E}}^{L/\sqrt{2E}} \left( D_XV \right)( \lambda^{ q}_{E}(s),\theta) ds\\+O(\eps^4|\ln\eps|).\end{split}\end{equation*}

Combining with \eqref{finalcondition},
  we see that, over an elementary building block, the
energy has changed by
\begin{equation*}
\label{gain}
\begin{split}
G(\tilde z_\eps^-) & = \Delta(\tilde z_\eps^-) +\int_{a/\sqrt{2E}}^{L/\sqrt{2E}} \frac{d}{dt}H_\eps(\geodesic^{\eps}_{s}( z^+_\eps), \chi^\eps _{s}( \theta))ds \\
& =
\eps^3\left[\Delta_1(\tilde z_0^-)+
 \int_{a/\sqrt{2E}}^{L/\sqrt{2E}}
\left(D_XV\right)( \lambda^{ q}_{E}(s), \theta) ds
\right ] \\
& +O(\eps^4|\ln\eps|).
\end{split}
\end{equation*}

In conclusion,
\begin{equation*}
G(\tilde z_\eps^-)  =\eps^3G_1(E,s,\theta) + O(\eps^4|\ln\eps|).
\end{equation*}
\end{proof}

Similarly to \eqref{scale-delta1}, we have the following rescaling
property of $G_1$ relative to action-angle coordinates
\begin{equation}\label{scale-G1}
 G_1(J, \phi,  \theta) = \frac{1}{J} G_1(\sqrt{2}, J\phi,  \theta).
\end{equation}

We make two important remarks.

\begin{rem} For the scattering map $\tilde S_\eps(\tilde z^-_\eps)=\tilde z^+_\eps$,  there is no trajectory of the system asymptotic to  $\tilde z^-_\eps$ in the past and to $\tilde z^+_\eps$ in the future. Rather,   $\tilde\psi_{T-}^\eps(\tilde z_\eps)$ will approach  $\tilde\psi_{T-}^\eps(\tilde z^-_\eps)$ as $T_-\to -\infty$, and $\tilde\psi_{T+}^\eps(\tilde z_\eps)$ will approach $\tilde\psi_{T+}^\eps(\tilde z^+_\eps)$ as $T_+\to +\infty$. Here $\tilde z_\eps=(\tilde\Omega^{-})^{-1}(\tilde z^-_\eps)=(\tilde\Omega^{+})^{-1}(\tilde z^+_\eps)$. For $-T_-=T_+=K|\ln(\eps)|$, the change of energy $\Delta(\tilde z^-_\eps)$ along the homoclinic trajectory from
$\tilde\psi_{T-}^\eps(\tilde z_\eps)$ to $\tilde\psi_{T+}^\eps(\tilde z_\eps)$ is the same as in \eqref{finalcondition}, up to an error term which is subdominant.\end{rem}

\begin{rem} Similarly, if instead of fixing the angle shift to a constant value $L$, we allow to chose, for each elementary building block,  a value of $L$ which is constant plus an  $O(\eps)$-term, the change of energy $G(\tilde z_\eps^-)$ is the same as in \eqref{G},
up to an error term  which is subdominant.\end{rem}

In Section \ref{section:constrwindows} we will show that there is a trajectory of the dynamics that follows closely the pseudo-orbit consisting of the segment of the  homoclinic trajectory from  $\tilde\psi_{T-}^\eps(\tilde z_\eps)$ to $\tilde\psi_{T+}^\eps(\tilde z_\eps)$,   followed by a segment of the trajectory of the inner flow $(\tilde\psi^\eps_{s})_{\mid\tilde\Lambda_\eps}$  with initial point $\tilde\psi_{T+}^\eps(\tilde z_\eps^+)$.

Based on these remarks, we will keep in mind that to an elementary building block we can  associate the following objects:
\begin{itemize}
\item One application of the scattering map $\tilde S_\eps(\tilde z^-_\eps)=\tilde z^+_\eps$, plus one segment of the trajectory of the inner flow $(\tilde\psi^\eps_{s})_{\mid\tilde\Lambda_\eps}$ with initial point $\tilde z_\eps^+$;
\item A pseudo-orbit, consisting of a segment of a homoclinic orbit from  $\tilde\psi_{T-}^\eps(\tilde z_\eps)$ to $\tilde\psi_{T+}^\eps(\tilde z_\eps)$, followed by a segment of the trajectory of the inner flow $(\tilde\psi^\eps_{s})_{\mid\tilde\Lambda_\eps}$  with initial point $\tilde\psi_{T+}^\eps(\tilde z_\eps^+)$;
\item A true orbit, that follows closely the pseudo-orbit described above.
\end{itemize}
The change of energy along either one of these objects is given by the estimate in Proposition \ref{prop:energyblock}, up to a subdominant error term.

\subsection{Generic set of potentials}\label{subsection:generic}

In this section we specify the set of potentials $\mathcal{V}'$ claimed in Theorem \ref{thm:main1}. The potential $V\in\mathcal{V}'$ are required to satisfy a condition that ensures that one can achieve consistent energy growth by applying the scattering map followed by the inner dynamics as in Proposition \ref{prop:energyblock}.

A key observation is that, since there exist a closed hyperbolic geodesic $\lambda_E$ and a corresponding transverse homoclinic orbit $\gamma_E$, by the Birkhoff-Smale Homoclinic Orbit Theorem (see e.g., \cite{KatokH1995}) there exist, in fact, at least two geometrically distinct homoclinic orbits to the same   geodesic. The existence of at least two homoclinic orbits can also be argued via variational methods.
We shall denote a pair of such homoclinic orbits by $\gamma^1_E$, $\gamma^2_E$, and the associated scattering maps $\tilde S^1_\eps$, $\tilde S^2_\eps$. We  denote the corresponding  leading terms $G_1$ from Proposition \ref{prop:energyblock} by $G^1_1,G^2_1$, respectively. The main idea is that, under some generic condition  on the potential $V$, utilizing one of the homoclinic orbits to grow energy will be more advantageous than utilizing the other, so one can select which one of the two homoclinic orbits to use in a way to ensure a net gain of energy over time.

To express the generic condition on the potentials $V\in\mathcal{V}'$, we write $G^1_1,G^2_1$ in action-angle coordinates. Indeed,  the energy growth \eqref{G} along an elementary building block depends only on the initial point $\tilde z_\eps^-=(k_\eps(\lambda_{J^2/2}(\phi)),\theta)\in\tilde\Lambda_\eps$ of the block, and  the leading term $G_1$ in \eqref{G1} depends on the corresponding angle-action coordinates and parameter value  $(J,\phi,\theta)$ corresponding to   $\tilde z_\eps^-$.

We define $\mathcal{V}'$ to be the set of potentials $V$ for which the following non-degeneracy condition holds:

\textsl {A4. Fix $ J_0 =\sqrt{2}$ so the corresponding energy level of $H_0$ is $E_0=1$. Fix $\theta_0\in N$ a non-trivial uniformly recurrent point.   We assume that for geodesic flow at this energy level there exist   two  geometrically different homoclinic trajectories $\gamma^1_{E_0}$, $\gamma^2_{E_0}$ to the same closed geodesic $\lambda_{E_0}$, and a common domain $\tilde U$ for the corresponding scattering maps $\tilde S^1_\eps,\tilde S^2_\eps$ such that:
\begin{equation}\label{eqn:A4} \sup_{\phi_1}G^1_1(J_0,\phi_1,\theta_0)\neq \sup_{\phi_2}G^2_1(J_0,\phi_2,\theta_0),\end{equation}
where  $G^1_1,G^2_1$ are the leading terms of the energy gain, given by \eqref{G1}.}

In the above, it is understood that the angles $\phi_1$, $\phi_2$ are restricted  to some closed intervals
where $(\lambda_{J_0^2/2}(\phi_1),\theta_0)\in \tilde U$, $(\lambda_{J_0^2/2}(\phi_2),\theta_0)\in \tilde U$,
where $\tilde k_\eps(\tilde U)$ is the domain $(\tilde U^1_\eps)^-$ of $\tilde S^1_\eps$ and also in the domain $(\tilde U^2_\eps)^-$
of $\tilde S^2_\eps$. Of course, a domain $\tilde U$ as in condition \textit{A4} is not unique. The condition requires only the existence of at least one domain $\tilde U$ on which \eqref{eqn:A4} holds.

We notice that because we assumed  the closed geodesics in the unit tangent bundle
have transverse homoclinic connections for the geodesic flow
in the unit tangent bundle, we can define the projections along each of
the points on the orbit. Similarly, we can lift for any value of
the energy by the scaling invariance.

Therefore, we can always define locally two scattering maps and we can continue them.
The only obstruction to define a scattering map in an arbitrary domain is that the local
continuation along a closed loop may have some monodromy.
Each of the scattering maps can be defined on any domain that does not contain essential circles, i.e., non-contractible loops of the cylinder. Hence, the assumption in \textit{A4} that the two scattering maps have a common domain is satisfied automatically.

Condition \textit{A4} is a condition on $V$  along trajectories of the unperturbed system, as shown by \eqref{G1}. This is an explicit condition, that can be verified for a given $V$ along a given closed geodesic and a given pair of homoclinic orbits  to that closed geodesic.

Assuming condition \textit{A4}, suppose, without loss of generality,   that   \[\sup_{\phi_1}G^1_1(J_0,\phi_1,\theta_0)> \sup_{\phi_2}G^2_1(J_0,\phi_1,\theta_0).\] Then there exists $\delta>0$, $\phi_*$  and an open neighborhood $\mathcal{P}\subseteq N$ of $\theta_0$ such that
\begin{equation}
\label{2G}G^1_1(J_0,\phi_*,\theta)-G^2_1(J_0,\phi,\theta)\geq 2\delta
\end{equation} for all $\theta\in\mathcal{P}$ and  all   $\phi$ with $\tilde k_\eps(\lambda_{E_0}(\phi),\theta_0)$ is in the domain of $\tilde S^2_\eps$. Moreover, we can choose the neighborhood $\mathcal{P}$ to be a flow box for the flow $\chi^\eps$ on $N$, i.e., a homeomorphic copy in $N$ of a $d$-dimensional open rectangle,  of the form $\{\chi^\eps_s(\theta)\,|\, \theta\in \Sigma,s\in(-\rho,\rho)\}$, where $\Sigma$ is a $(d-1)$-dimensional open disk transverse  to the flow $\chi^\eps_s$, and $\rho>0$ is sufficiently small so that the flow $\chi^\eps_s$ is transverse to each surface $\{\chi^\eps_{s_0}(\theta)\,|\,\theta\in \Sigma\}$ for each $s_0\in(-\rho,\rho)$.

The condition (A4) is formulated in terms of the geodesic and
homoclinic orbits at some fixed energy level $E_0=1$, corresponding to
$J_0=\sqrt{2}$. By the rescaling property of the geodesic flow, and by
the corresponding rescaling property \eqref{scale-G1}, it follows that
\begin{equation}\label{scale-G1J}G^1_1(J,\frac{1}{J}\phi_*,\theta)-G^2_1(J,\phi',\theta)\geq
\frac{2\delta }{J},\end{equation}
for all $\theta \in\mathcal{P}$ and all $\phi'=\frac{1}{J}\phi$ with $\phi$ restricted as before.
Since we restrict to the interval
$H_\eps\in[1,2]$, we have $\sqrt{2}\leq J\leq 2$ hence
\[G^1_1(J,\frac{1}{J}\phi_*,\theta)-G^2_1(J,\phi',\theta)\geq \delta,\]
for all $\theta \in\mathcal{P}$ and all $\phi'$ as before.

\begin{lem}\label{lemma:generic}
Given a  geodesic flow, a closed geodesic  $\lambda$  and two geometrically distinct  homoclinic
orbits  $\gamma^1_{E_0}$, $\gamma^2_{E_0}$,  at the energy level $E_0=1$.
Then,  the set $\mathcal{V}'$ of potentials $V$  that satisfy assumption (A4) is $C^0$ open and $C^\infty$ dense in the set of all the potentials.
\end{lem}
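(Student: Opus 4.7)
The plan is to use the linearity of $G^1_1[V]$ and $G^2_1[V]$ as functionals of $V$, together with the fact that they integrate $D_X V$ along the geometrically distinct orbits $\gamma^1_{E_0}, \gamma^2_{E_0}$ (and the shared closed geodesic $\lambda_{E_0}$).

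For openness, observe that for fixed $(J_0,\theta_0)$ and for $\phi$ ranging over the compact interval of the common scattering-map domain $\tilde U$, the integrals in \eqref{G1} converge exponentially fast and uniformly in $\phi$, so the map $V \mapsto G^i_1[V](J_0, \phi, \theta_0)$ depends continuously on $V$ in a sufficiently strong topology (the integrand is bounded in terms of the first $\theta$-derivative of $V$, which is part of the structure of $\mathcal V$). Consequently $V \mapsto \sup_\phi G^i_1[V](J_0, \phi, \theta_0)$ is continuous, and the strict inequality \eqref{eqn:A4} is preserved under small perturbations of $V$, which yields the openness claim.

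For density, I suppose $V \notin \mathcal{V}'$ and construct a small $C^\infty$ perturbation $W \in \mathcal{V}$ that shifts $\sup_\phi G^1_1$ strictly while leaving $\sup_\phi G^2_1$ unchanged. Since $\gamma^1_{E_0}$ and $\gamma^2_{E_0}$ are geometrically distinct homoclinic orbits to $\lambda_{E_0}$, and both approach $\lambda_{E_0}$ only asymptotically, there exists a time $\sigma_0$ and an open ball $U \subset M$ around $\gamma^{1,q}_{E_0}(\sigma_0)$ which is disjoint from the projections of $\gamma^{2,q}_{E_0}(\mathbb R)$ and $\lambda^q_{E_0}(\mathbb R)$. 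Choose $W(q,\theta) = \varphi(q)\psi(\theta)$ with $\varphi$ a $C^\infty$ bump supported in $U$ and $\psi$ a $C^\infty$ function on $N$ supported near $\theta_0$ with $(\nabla_\theta \psi)(\theta_0)\cdot X(\theta_0) > 0$, which is available because $X(\theta_0) \neq 0$ by the hypothesis in (A4) inherited from (A3). Then every integral entering $G^2_1[W](J_0, \cdot, \theta_0)$ vanishes identically, while by translating the support of $\varphi$ along the image of $\gamma^{1,q}_{E_0}$ one can arrange $G^1_1[W](J_0, \phi^1_*, \theta_0) > 0$ for a maximizer $\phi^1_*$ of $G^1_1[V](J_0,\cdot,\theta_0)$. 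The standard right-derivative formula for a supremum of linear functionals then gives
\[
\sup_\phi G^1_1[V+\epsilon W](J_0,\phi,\theta_0) \ge \sup_\phi G^1_1[V] + \epsilon\, G^1_1[W](J_0,\phi^1_*,\theta_0),
\]
while $\sup_\phi G^2_1[V+\epsilon W] = \sup_\phi G^2_1[V]$ is constant in $\epsilon$, producing the strict inequality in \eqref{eqn:A4} for all sufficiently small $\epsilon>0$.

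The main obstacle is producing the ball $U$ separating $\gamma^1_{E_0}$ from both $\gamma^2_{E_0}$ and $\lambda_{E_0}$ at the level of their $M$-projections. When $\dim M \ge 3$ this is immediate by dimension count on generic points of $\gamma^1_{E_0}$. When $\dim M = 2$ the three projections may intersect along segments, so some extra care is needed: one uses that geometric distinctness in $T^1 M$ together with the transversality of the homoclinic intersection forces the projections to agree only on measure-zero subsets (or on subsets of isolated intersection points for generic metrics), so a sufficiently small bump at a generic point of $\gamma^{1,q}_{E_0}$ will miss the other two projections over the finite-time window that matters for the leading contribution to $G^1_1[W]$. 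With this geometric input in hand the above openness-plus-perturbation argument goes through, establishing both assertions of the lemma.
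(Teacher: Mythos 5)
Your strategy is the same as the paper's: openness from the linearity and continuity of $V\mapsto G^1_1[V],G^2_1[V]$, and density by adding a small product bump $W(q,\theta)=\varphi(q)\psi(\theta)$ supported near a point of $\gamma^1_{E_0}$ and near $\theta_0$, with $\nabla_\theta\psi(\theta_0)\cdot X(\theta_0)>0$, so that every integral entering $G^2_1[W]$ vanishes while the homoclinic integral in $G^1_1[W]$ is strictly positive (and, since the $\gamma$-integral in \eqref{G1} does not depend on $\phi$, this lifts one supremum and leaves the other untouched). You are in fact more careful than the paper on one point: the paper picks a ball $B\subset T^*M$ around $\gamma^1_{E_0}(s_0)$ disjoint from $\lambda_{E_0}\cup\gamma^2_{E_0}$ and a bump $W:T^*M\times N\to\mathbb{R}$ supported in $B\times B'$, which is not literally an element of $\mathcal{V}$; since admissible perturbations are potentials on $M\times N$, the separation must indeed be done for the configuration projections, exactly the issue you isolate. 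Your remark that openness needs a topology controlling $\nabla_\theta V$ (and uniformity of the limits in $\phi$) is also fair, and consistent with how the lemma is used ($C^2$-openness in the theorems).

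The genuine weak spot is your handling of the case $\dim M=2$. You cannot appeal to ``generic metrics'': the geodesic flow, $\lambda$ and the two homoclinics are fixed data of the lemma. And it is not enough for the bump to miss the other two projections ``over the finite-time window that matters'': $G^2_1[W]$ integrates $D_XW$ over all of $\gamma^{2,q}$ and over semi-infinite arcs of $\lambda^q$, so any intersection of $\operatorname{supp}\varphi$ with those projections, at whatever time, can make $G^2_1[W]\neq 0$ and destroy the comparison of the two suprema; the support must avoid $\lambda^q\cup\gamma^{2,q}(\mathbb{R})$ outright. Fortunately the separation you need holds for the given metric in every dimension, by elementary geodesic geometry rather than genericity: the set $\lambda^q\cup\gamma^{2,q}(\mathbb{R})$ is closed (the closure of $\gamma^{2,q}(\mathbb{R})$ adds only $\lambda^q$), and if $\gamma^{1,q}(\mathbb{R})$ were contained in it, a Baire argument in the parameter would force an arc of $\gamma^{1,q}$ to lie inside a short embedded arc of $\gamma^{2,q}$ or of $\lambda^q$, hence, by uniqueness of geodesics, $\gamma^{1,q}$ would coincide as an unparametrized geodesic with $\gamma^{2,q}$ or with $\lambda^q$. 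Coincidence with $\lambda^q$ contradicts non-periodicity of $\gamma^1$; coincidence with $\gamma^{2,q}$ forces $\gamma^2$ to be $\gamma^1$ or its time reversal, and the reversal is homoclinic to the reversed closed geodesic, which is a different orbit of the geodesic flow than $\lambda$ (a closed unit-speed geodesic is never its own time reversal in $T^1M$), contradicting that both homoclinics are asymptotic to $\lambda$. Hence some point $\gamma^{1,q}(s_0)$ has positive distance from $\lambda^q\cup\gamma^{2,q}(\mathbb{R})$, and a bump supported in a small ball of $M$ about it makes your perturbation argument go through verbatim, in dimension $2$ as well. With that substitution for your dimension-$2$ discussion, the proof is complete and matches the paper's construction.
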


\begin{proof}
We assume by contradiction that, for some domain $\tilde U$, the condition (A4) does not hold.

Let $\sup_{\phi_1}G_1^1 (J_0,\phi_1,\theta_0) =G_1^1 (J_0,\phi_1^*,\theta_0)$ for some $\phi_1^*$, and
$\sup_{\phi_2}G_1^2 (J_0,\phi_2,\theta_0) =G_1^2 (J_0,\phi_2^*,\theta_0)$ for some $\phi_2^*$.
Then  we have
\begin{equation} \label{negation}
\begin{split}G_1^1 (J_0,\phi_1^*,\theta_0) = G_1^2(J_0,\phi_2^*,\theta_0).
\end{split}
\end{equation}

Since for fixed $(J_0,\phi_1^*)$ and  $(J_0,\phi_2^*)$  the functions
$G_1^1(J_0,\phi_1^*,\theta_0)$ and $G_1^2(J_0,\phi_2^*,\theta_0)$, respectively,   considered as  functionals of $V$,
are continuous when the set $\mathcal{V}$ of potentials is given the $C^0$ topology, it is clear that
\eqref{negation} defines a $C^0$-closed set (intersection of
closed sets) and, therefore the condition (A4) holds in a $C^0$-open set of potentials.

To complete the proof of Lemma \ref{lemma:generic}, it suffices to
show that, given a potential $V$ that satisfies \eqref{negation}, there is
a  arbitrarily $C^\infty$-small perturbation of $V$ which does not
satisfy \eqref{negation}.

The construction is very clear. We note that $G^j_1$, $j\in\{1,2\}$ is a sum of  integrals over several trajectory segments  of the geodesic flow:
some are segments of closed geodesics in $\Lambda_0$, which are recurrent, and one of them is a segment of a  homoclinic trajectory.

Because the two homoclinic orbits $\gamma^1_{E_0}, \gamma^2_{E_0}$ are geometrically different,
we can find a $s_0 \in \mathbb{R}$ and a small enough ball $B \subset T^*M$  centered
around $\gamma^1_{E_0}(s_0)$ in such a way that
$B \cap \lambda_{E_0}= B \cap \gamma^2_{E_0} =\emptyset$.
Moreover, we can choose the ball $B$ such that there  exists a  small interval
$I$ around $s_0$ such that
$\gamma^1(\mathbb{R}) \cap B = \gamma^1(I)$. Choose $B'$ a small ball in $N$ centered around $\theta_0$.

Now we choose a function $W:T^*M \times N \rightarrow \mathbb{R} $
with support in $B \times B'$ so that
$\nabla_\theta W( \gamma^1_{E_0}(s), \theta_0)\cdot X(\theta_0) \ge  \rho > 0$ for all $s\in I$ and $\theta\in B'$, and for some $\rho>0$.
It follows from \eqref{eqn:delta1} that perturbing $V$ to  $V+W$ yields $G^1_1(J_0,\phi_1^*,\theta_0)\neq  G^2_1(J_0,\phi_2^*,\theta_0)$.
\end{proof}

\subsection{Gain of energy along sequences of elementary building blocks.}\label{subsection:gainseq}

We will estimate the gain of the scaled energy $H_\eps$ in time,   along some suitably chosen  sequences of elementary building blocks, for potentials $V\in\mathcal{V}'$. We will account for both the scaled time $s$  and the physical time $t$.

Assume that $V$ satisfies condition (A4) from the previous section. Then for given any $J_0$ with $\|J_0\|=\sqrt{2}$ there exist $\phi_*$ with $\tilde k_\eps(\lambda_{E_0}(\phi_*),\theta_0)\in  (\tilde U^1_\eps)^-$ and a flow box  $\mathcal{P}\subseteq N$, of size $O(1)$, with the property that,
if $\theta\in\mathcal{P}$ and $\tilde k_\eps(\lambda_{E_0}(\phi),\theta_0)\in  (\tilde U^2_\eps)^-$,  then
\begin{equation*} G_1^1(J_0 ,\phi_*,\theta)-G^2_1(J_0 ,\phi,\theta)\geq 2\delta.
\end{equation*}

Due to the rescaling property of the geodesic flow, it  follows that for any $J\in[\sqrt{2},2]$ (corresponding to the energy range $E\in[1,2]$ fixed in Subsection \ref{subsec:buildingblock}), there exists $\phi_*(J)=\phi/J_*$ such that
\begin{equation}\label{eqn:G1ineq}G_1^1(J,\phi_*(J),\theta)-G^2_1(J ,\phi,\theta)\geq \delta,
\end{equation}
for all $\theta\in\mathcal{P}$ and all $\phi$ in the appropriate domain.

The flow $\chi^\eps_s$ is slow, and so is its time-$1$ map. It takes a  time $O(1/\eps)$ to travel a distance $O(1)$.
The flow box $\mathcal{P}$ was chosen of the form $\{\chi^\eps_s(\theta)\,|\, \theta\in \Sigma,s\in(-\rho,\rho)\}$ with  $\Sigma$ as in Subsection \ref{subsection:generic}.

Assume that $\theta_0\in N$ is a uniformly recurrent point with $X(\theta_0)\neq 0$. Choosing the neighborhood $\mathcal{P}$ small enough ensures that the trajectory of $\theta_0$ will successively leave $\mathcal{P}$ and return to $\mathcal{P}$.
The lengths of the time intervals when the trajectory of $\theta_0$ moves through $\mathcal{P}$ are uniformly bounded above and below, and, because of the uniform recurrence hypothesis \textsl{A3}, so are the lengths of the time intervals when the  trajectory of $\theta_0$ moves through $N\setminus\mathcal{P}$. More precisely, there
exist $0<\tau_0<\tau'_0$, independent of $\eps$, such that the trajectory of $\theta_0$ spends a scaled time between $\tau_0/\eps$ and $\tau'_0/\eps$ in $\mathcal{P}$, and there
exist $0<\tau_1<\tau'_1$, independent of $\eps$, such that the trajectory of $\theta_0$ spends a scaled time between $\tau_1/\eps$ and $\tau'_1/\eps$ in $N\setminus\mathcal{P}$ between successive returns to $\mathcal{P}$.

Now we compute the growth of energy during a range of scaled time $\Delta s=1/\eps^2$. We follow a sequence of elementary building blocks of the type $B^1(J,\phi,\theta)$,  $B^2(J,\phi,\theta)$, where the superscripts correspond to the two choices of homoclinic orbits/scattering maps $\gamma^1_{E},\gamma^2_{E}$ respectively, where the succession of blocks is chosen as follows.   When $\theta \in \mathcal{P}$ we use blocks of the type $B^1(J,\phi_*(J),\theta)$, where $\phi_*(J)$ is defined as before.  When $\theta\not\in\textrm{cl}( \mathcal{P})$ we use blocks of the type  $B^2(J,\phi_0,\theta)$ for some $\phi_0$ fixed. Thus, the sequence is composed of strings of the type  $B^1(J,\phi_*(J),\theta)$, alternating with strings of the type $B^2(J,\phi_0,\theta)$;  the  proportion of time when we switch from $B^1(J,\phi_*(J),\theta)$ to $B^2(J,\phi_0,\theta)$ or viceversa is $O(\eps)$.

We  compute the  growth of energy along such a sequence of building block spanning a range of   scaled time  of  $1/\eps^2$. The initial condition for the dynamics   on $N$ is  the point $\theta_0$ which is assumed to be uniformly recurrent. The $J$-coordinate along the pseudo-orbit takes the successive values $J(n)$, and the $\theta$-coordinate along the pseudo-orbit takes the successive values $\theta(n)$; the $\phi$-coordinate is maintained fixed $\phi=\phi_0$.  

We have:

\begin{equation}\begin{split}\label{eqn:calculus1}
\Delta H_\eps=\eps^3& [\sum _{{n\in[0,1/\eps^2]}\atop{\theta(n)\in \mathcal{P}, \theta((n+1)\eps)\in \mathcal{P}}}
G^1_1(J(n),\phi_*(J(n)), \theta(n))  \\
& +\sum _{{n\in[0,1/\eps^2]}\atop{\theta(n)\not \in \mathcal{P}, \theta((n+1)\eps)\not\in \mathcal{P}}}
G^2_1(J(n),\phi_0, \theta( n)) \\
& + \sum _{{n\in[0,1/\eps^2]}\atop{\theta(n)\in \mathcal{P}, \theta((n+1)\eps)\not\in \mathcal{P}}}
  G^2_1(J(n),\phi_0, \theta( n))\\
& + \sum _{{n\in[0,1/\eps^2]}\atop{\theta(n)\not\in \mathcal{P}, \theta((n+1)\eps)\in \mathcal{P}}}
 G^2_1(J(n),\phi_0,\theta( n))]\\
& +O(\eps^2|\ln(\eps)|),\end{split}
\end{equation}
where the above error term is due to the accumulation  error term of $O(\eps^4|\ln(\eps)|)$ from \eqref{G1} over $1/\eps^2$ time steps.
The terms corresponding to the times $n$ when $\theta(n)\in \mathcal{P},\theta((n+1)\eps)\not\in \mathcal{P}$ and $\theta(n)\not\in \mathcal{P}, \theta((n+1)\eps)\in \mathcal{P}$ are $O(\eps \cdot 1/\eps^2)=O(1/\eps)$, and since $G^1_1$, $G^2_1$ are bounded, they contribute to a combined error
term $O(\eps^3\cdot 1/\eps)=O(\eps^2)$, which is subdominant.

Thus we can write
\begin{equation}\begin{split}\label{eqn:calculus2}
\Delta H_\eps&\geq \eps^3[\sum _{{n\in[0,1/\eps^2]}\atop{\theta(n)\in \mathcal{P}, \theta((n+1)\eps)\in \mathcal{P}}}
 G^1_1(J(n),\phi_0, \theta(n)) \\
& +  \sum _{{n\in[0,1/\eps^2]}\atop{\theta(n)\not \in \mathcal{P}, \theta((n+1)\eps)\not\in \mathcal{P}}}
G^2_1(J(n),\phi_0, \theta(n))]\\
& +O(\eps^2 |\ln(\eps)|).\end{split}
\end{equation}

Now we rearrange the summation above to estimate the total gain of energy while $\theta\in\mathcal{P}$:
\begin{equation}\begin{split}\label{eqn:calculus3}
\Delta H_\eps&\geq\eps^3 \sum _{{n\in[0,1/\eps^2]}\atop{\theta(n)\in \mathcal{P}, \theta((n+1)\eps)\in \mathcal{P}}}
[G^1_1(J(n),\phi_0, \theta(n))-G^2_1(J(n),\phi_0, \theta(n))]\\
&+\eps^3 \sum _{n\in[0,1/\eps^2]} G^2_1(J(n),\phi_0, \theta(n))\\
&+O(\eps^2|\ln(\eps)|)\\
&\geq \eps^3\delta \frac{\tau_0}{\eps^2} +\eps^3 \sum _{n\in[0,1/\eps^2]} G^2_1(J(n),\phi_0, \theta(n))) +O(\eps^2|\ln(\eps)|)\\
&=\eps\tau_0\delta+\eps^3 \sum _{n\in[0,1/\eps^2]} G^2_1(J(n),\phi_0, \theta(n))+O(\eps^2|\ln(\eps)|).
\end{split}
\end{equation}

Now we treat the remaining summation from above as a Riemann sum of mesh $1$ and we approximate it by a Riemman integral.
Since $J(\sigma)$ changes by  at most $O(\eps^2)$ and $\theta(\sigma)$ changes by at most $O(\eps)$ over each interval $[n,n+1]$, we have
$\max_{\sigma\in[n,n+1]} |\frac{d}{d\sigma} G_1(J(\sigma),\phi,\theta(\sigma))|=O(\eps)$ for all $n$.
Hence the error in approximating the integral on  a subdivision by a term of the Riemann sum is less than $O(\eps)$, and since there are  $1/\eps^2$ terms in the Riemann sum, the error in approximating the integral by the Riemann sum is $O(1/\eps)$. Taking into account the $\eps^3$-leading factor we obtain
\begin{equation}\label{eqn:calculus4}
\begin{split}
\eps^3\sum _{n\in[0,1/\eps^2]} G^2_1(J(n),\phi_0, \theta(n))\leq
\eps^3\int _{0}^{1/\eps^2} G^2_1(J(\sigma),\phi_0, \theta(\sigma))d\sigma + O(\eps^3\frac{1}{\eps})\\\leq
\eps^3[A^2_1(J(1/\eps^2),\phi_0, \theta(1/\eps^2))-A^2_1(J(0),\phi_0, \theta(0))]+ O(\eps^2)=O(\eps^2),
\end{split}
\end{equation} where
$G^2_1=   D_X A^2_1$, where  the antiderivative $A^2_1$ of $G^2_1$ is bounded by the compactness of $(\Lambda_0\times N)\cap H_0^{-1}[1,2]$. (Here we used the fact that $\frac{d}{dt}A^2_1(J(t),\phi_0,\theta(t))=\frac{dA^2_1}{dJ}\dot J+ D_XA^2_1 =
D_XA^2_1 +O(\eps^2)=G^2_1+O(\eps^2)$.)

From \eqref{eqn:calculus3} we conclude that, under the non-degeneracy assumption \textsl{A4}, the gain of energy following  a string of elementary building blocks of the type $B^1(J,\phi_0,\theta)$,  $B^2(J,\phi_0,\theta)$, over a range of scaled time $1/\eps^2$ satisfies
\begin{equation}\label{eqn:calculus5}
\begin{split}
\Delta H_\eps\geq \eps\tau_0\delta+O(\eps^2|\ln(\eps)|).
\end{split}
\end{equation}

Note that during this time interval we do not leave the scaled energy interval $E\in[1,2]$ fixed at the beginning of the argument.

Thus, during a time period of $1/\eps^2$, moving along the pseudo-orbits corresponding to the elementary building blocks  of the type $B^1(J,\phi_0,\theta)$,  $B^2(J,\phi_0,\theta)$, in the specified order, corresponds to a scaled energy growth of $O(\eps)$.
Since $H_\eps=\eps^2H_0$ and $\Delta s =\Delta t/\eps$, we obtain a physical energy growth of $\Delta H =O(1/\eps^2)$ during  a physical time interval $\Delta t=O(1/\eps^2)$, that is,  a linear growth rate of the physical energy in physical time.

\begin{rem}
If we construct a sequence of elementary building blocks corresponding to a single homoclinic $\gamma^j_{E_0}$, $j\in\{1,2\}$,  for a time of $O(1/\eps)$, and if the variable $\phi_0$ in the construction is fixed as above, a calculation as in \eqref{eqn:calculus4}  shows that the change of scaled energy is $O(\eps^3)$. This implies that $G^j_1$ cannot be always positive or always negative for all this time. Along any sequence of elementary building blocks for which the external flow $\chi^\eps$  returns to a small neighborhood of its initial point $\theta_0$,  there will always be regions in $\tilde \Lambda_0$ where $G^j_1$ is positive as well as regions where $G^j_1$ is negative. By the same reason, we cannot have $G^1_1>G^2_1$ for all time. Hence, besides a flow box $\mathcal{P}\subseteq N$  such that
$G^1_1(J_0,\phi_0,\theta)-G^2_1(J_0,\phi_0,\theta)>0$ for all $\theta\in\mathcal{P}$, there should also exist another flow box $\mathcal{P}'$ such that $G^2_1(J_0,\phi_0,\theta)-G^1_1(J_0,\phi_0,\theta)>0$ for all $\theta\in\mathcal{P}'$.
\end{rem}

\subsection{Sequences of elementary building blocks achieving unbounded growth of energy}\label{subsection:unbounded}
The above construction of pseudo-orbits can be continued for a time $O(1/\eps^3)$ to achieve an energy growth
corresponding to the whole interval $H_\eps\in[1,2]$.  Since $\eps=1/\sqrt{E_*}$, the corresponding growth of physical energy is $H=\frac{1}{\eps^2}H_\eps\in[E_*,2E_*]$.
To grow the physical energy to infinity, we repeat the procedure, re-initializing the process staring with $\eps=1/\sqrt{2E_*}$. For this new value of the small parameter $\eps$ in \eqref{eqn:hamrescaled}, growing the physical energy $H\in [2E_*,4E_*]$ amounts to growing the scaled energy $H_\eps\in[1,2]$. So all the estimates made in this subsection remain valid and carry forward. Thus, this construction of pseudo-orbits featuring energy growth can be repeated indefinitely.

In Section \ref{section:existenceoforbits} we show the existence of true orbits `shadowing' the pseudo-orbits constructed in this section.

\subsection{Sequences of elementary building blocks achieving symbolic dynamics}\label{subsection:symbolic}
In order to construct elementary building blocks along which the  energy follows a prescribed path $\mathcal{E}:[0,\infty)\to \mathbb{R}$,  we alternate elementary building blocks  leading to
 energy growth with blocks leading to energy loss.
Choosing the proportions of the energy growth and of the energy loss
allows us to control the energy change.  In particular, we
can obtain rates close to zero by alternating  blocks which gain energy
with blocks which loose energy. The change of energy along an elementary building block is
not more that $\eps^3$ in the scaled variables, which corresponds
to a change of energy of $E^{-1/2}$ in the physical variables. In this way, we can follow the prescribed energy path $\mathcal{E}$ up to $E^{-1/2}$.
Then, once the sequence of elementary building blocks is  constructed, in Section \ref{section:constrwindows}
we will  construct a sequence of correctly aligned windows along this sequence of block, and apply the
shadowing Theorem \ref{theorem:detorb} and obtain an  orbit that follows these windows.

\begin{rem} We remark that in the present mechanism we  do not achieve
small rates of growth by staying near a KAM torus, as in \cite{DLS00,DLS06a}, rather the orbits we construct are performing homoclinic excursions most of the time. Thus, these orbits are very different from the previously constructed orbits.
\end{rem}

\section{Existence of orbits following sequences of elementary building blocks}\label{section:existenceoforbits}

In this section, we   show that  we can concatenate infinitely many elementary building blocks as above, and that there exist a true orbit that follows the pseudo-orbit underlying those blocks, thus achieving infinite energy growth.
Since our system is not hyperbolic, the classical shadowing lemma for hyperbolic systems, saying that any pseudo-orbit can be `shadowed' by a true orbit, does not apply. We will show that, nevertheless, the  pseudo-orbits constructed in the previous section can be approximated by a true orbit. For this, we use a topological argument based on correctly aligned windows.
This argument is constructive and robust, so it allows us to also estimate the energy growth rate along the resulting orbit.

\subsection{Topological method}\label{section:topological}

In this section we review briefly the topological method of
correctly aligned windows, following
\cite{GideaZ04,GideaLlave06,GideaR07}. Earlier versions of the method go back to \cite{ConleyE1971,EastonMcGehee79,Easton81}.

A window is a triple consisting  of a mapping, a set, and a partition of the boundary of that set:
the mapping is a homeomorphism from a multi-dimensional rectangle in some Euclidean space to a manifold,
the set is the image of the multi-dimensional rectangle through the homeomorphism,
and  the partition divides the boundary of the set into an  exit set and an entry set, which play a dynamical role.

\begin{defn}
A $(n_1,n_2)$-window
in an $m$-dimensional manifold $M$, where $n_1+n_2=m$, is an ensemble $(W,W^{\rm exit}, W^{\rm entry},c )$ consisting of: \begin{itemize} \item[(1)] a homeomorphism
$c:\textrm{dom}(c )\to \textrm{im}(c)$, where
$\textrm{dom} (c )$ is an open neighborhood of
$[0,1]^{n_1}\times [0,1]^{n_2}\subseteq \mathbb{R}^m$, and
$\textrm{im}(c)$ is an open set in $M$,
\item[(2)] a
homeomorphic copy $W:=c\left ([0,1]^{n_1}\times [0,1]^{n_2}\right
)\subseteq \textrm{im}(c)$ of  $[0,1]^{n_1}\times [0,1]^{n_2}$ in~$M$,

\item[(3)] an `exit set'
\[W^{\rm exit}:=c\left(\partial[0,1]^{n_1}\times [0,1]^{n_2}
\right )\] and  an `entry set'  \[W^{\rm entry}:=c\left([0,1]^{n_1}\times
\partial[0,1]^{n_2}\right ).\]
\end{itemize}
\end{defn}
We adopt the following notation: $W_c=c^{-1}(W)=[0,1]^{n_1}\times [0,1]^{n_2}$,
$(W^{\rm exit} )_c=c^{-1}(W^{\rm exit} )=\partial[0,1]^{n_1}\times [0,1]^{n_2}$, and $(W^{\rm
entry} )_c=c^{-1}(W^{\rm entry})=[0,1]^{n_1}\times \partial[0,1]^{n_2}$. When the coordinate
system $c$ is evident from context, we suppress the subscript
$c$ from the notation.

Informally, two windows are correctly aligned under some map, provided that the
image of the first window under the map  crosses the second window
all the way through and across its exit set. Below we present a  version  of
the definition of correct alignment that is sufficient for the
purpose of this paper. More details can be found in \cite{GideaZ04}. Given two windows $(W_1,W_1^{\rm exit}, W_1^{\rm entry},c_1)$  and
$(W_2,W_2^{\rm exit}, W_2^{\rm entry},c_2)$ and a continuous map $f:M\to M$ with  $f(\textrm {im}(c_1))\subseteq
\textrm {im}(c_2)$, we will denote  $f_{c_1,c_2}=c_2^{-1}\circ f\circ c_1$.

\begin{defn}\label{defn:corr}
The window $W_1$ is correctly aligned with the window $W_2$ under $f$ if the following conditions
are satisfied:
\begin{itemize}
\item[(1)] In the case when $n_1,n_2\neq 0$, the conditions are:\begin{itemize}
\item[(1.i)]   There exists a continuous homotopy $h:[0,1]\times
(W_1)_{c_1} \to {\mathbb R}^{n_1} \times {\mathbb R}^{n_2}$ with
   \begin{eqnarray*}
      h_0&=&f_{c_1,c_2}, \\
      h([0,1],(W^{\rm exit}_1)_{c_1}) \cap (W_2)_{c_2} &=& \emptyset, \\
      h([0,1],(W_1)_{c_1}) \cap (W_2^{\rm entry})_{c_2} &=& \emptyset,\end{eqnarray*}
      \item[(1.ii)] There exists a linear map $A:{\mathbb R}^{n_1} \to
{\mathbb R}^{n_1}$, such that
\begin{itemize}
  \item[(1.ii.a)] $h_1(x,y)=(Ax,0)$ for $x \in [0,1]^{n_1}$ and $y \in [0,1]^{n_2}$,
    \item [(1.ii.b)]  $A(\partial[0,1]^{n_1}) \subset {\mathbb R}^{n_1}
    \setminus[0,1]^{n_1}$.
 \end{itemize}
   \end{itemize}
 \item[(2)] In the case when $n_2=0$, the conditions are:\begin{itemize}
\item[(2.i)]   $(W_1^{\rm exit})_{c_1}=(\partial W_1)_{c_1}$, $(W_2^{\rm exit})_{c_2}=(\partial W_2)_{c_2}$,
\item[(2.ii)] $(W_2)_{c_2}\subseteq \textrm{int}(f_{c_1,c_2}((W_1)_{c_1}))$,
   \end{itemize}
\item[(3)] In the case when $n_1=0$, the conditions are:\begin{itemize}
\item[(3.i)]   $(W_1^{\rm exit})_{c_1}=\emptyset$, $(W_2^{\rm exit})_{c_2}=\emptyset$,
\item[(3.ii)] $\textrm{int}(W_2)_{c_2}\supseteq f_{c_1,c_2}((W_1)_{c_1})$.
\end{itemize}\end{itemize}
\end{defn}

The correct alignment of windows is robust, in the sense that if two
windows are correctly aligned under a map, then they remain
correctly aligned under a sufficiently small $C^0$-perturbation of the
map. This allows to verify the correct alignment of long, finite sequences of windows,
by breaking them into shorter, finite sequences of windows whose correct alignment can be
easily controlled by perturbative arguments. This property will not be used in this paper.

Also, the correct alignment satisfies a natural product property.
Given two windows and a map, if each window can be written as a
product of window components, and if the components of the first
window are correctly aligned with the corresponding components of
the second window under the appropriate components of the map, then
the first window is correctly aligned with the second window under
the given map. We
refer to \cite{GideaLlave06,GideaR07} for details.

The following result can be thought of  as a topological version of
the Shadowing Lemma. Note that this result does not assume that the system is
hyperbolic.
\begin{thm}[\cite{GideaZ04}]\label{theorem:detorb}
Assume that $\{W_i\}_{i\in\mathbb{Z}}$ is a bi-infinite sequence of $(n_1,n_2)$-windows in $M$, and  $\{f_i\}_{i\in\mathbb{Z}}$ are continuous
maps on $M$. If $W_i$ is correctly aligned with $W_{i+1}$ under $f_i$ for every $i\in\mathbb{Z}$, then
there exists a point $p\in W_0$ such that
\[(f_{i}\circ \ldots\circ f_{0})(p)\in W_{i+1},\]
 for all $i\in\mathbb{Z}$.

Assume now that $\{W_i\}_{i\in\{0,\ldots,d\}}$ is a  finite sequence of $(n_1,n_2)$-windows in $M$, and $\{f_i\}_{i\in\{0,\ldots,d\}}$ are continuous
maps on $M$. If $W_i$ is correctly aligned with $W_{i+1}$ under $f_i$ for every $i=0,\ldots, d-1$, and $W_d$ is correctly aligned with $W_0$ under $f_d$, then
there exists a point $p\in W_0$ such that
\[(f_{d}\circ \ldots\circ f_{0})(p)=p.\]
\end{thm}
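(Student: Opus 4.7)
The plan is to reduce both assertions to Brouwer-degree fixed-point arguments in cube coordinates and handle the bi-infinite case by compactness. Correct alignment (Definition \ref{defn:corr}) is engineered precisely so that, after conjugation by the charts $c_i$, the induced cube-to-cube map $f_{i,c_i,c_{i+1}}$ is homotopic to a product model $(x,y)\mapsto(A_i x,0)$ with $A_i$ linear and $A_i(\partial[0,1]^{n_1})$ lying outside $[0,1]^{n_1}$, while throughout the homotopy the image of the exit face $\partial[0,1]^{n_1}\times[0,1]^{n_2}$ never meets $[0,1]^{n_1}\times[0,1]^{n_2}$ and the image of the whole cube never meets the entry face $[0,1]^{n_1}\times\partial[0,1]^{n_2}$. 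These boundary conditions are the topological heart of the argument.

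For the finite periodic case, I would first concatenate the one-step homotopies of Definition \ref{defn:corr} to produce a homotopy in the $W_0$-cube coordinates between the return map $F=c_0^{-1}\circ f_d\circ\cdots\circ f_0\circ c_0$ and the iterated linear model $(x,y)\mapsto(A_d\cdots A_0 x,0)$, deforming one pair $(W_i,W_{i+1})$ at a time on successive sub-intervals of the homotopy parameter. Then I would apply Brouwer degree to the map $G(x,y)=F(x,y)-(x,y)$ on $[0,1]^{n_1+n_2}$. The product model yields $G_1(x,y)=(A_d\cdots A_0 x-x,-y)$, whose unique zero $(x_\ast,0)$ lies in the interior and has nonzero index, because $A_d\cdots A_0$ still maps $\partial[0,1]^{n_1}$ outside $[0,1]^{n_1}$, forcing a fixed point inside, while the $y$-equation has a nondegenerate zero at $y=0$. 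Homotopy invariance of degree, guaranteed by the non-intersection conditions on the boundary throughout the concatenated homotopy, then gives a zero of $G$ in the interior of $[0,1]^{n_1+n_2}$, i.e.\ the desired periodic point. The degenerate subcases $n_2=0$ and $n_1=0$ reduce directly to Brouwer's theorem applied to the covering or containment relation appearing in Definition \ref{defn:corr}.

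For the bi-infinite assertion I would use a compactness and diagonal argument. For each $N\geq 1$, a non-periodic variant of the same degree computation shows that
\[
K_N \;=\; \{\,p\in W_0 \,:\, (f_i\circ\cdots\circ f_0)(p)\in W_{i+1}\text{ for } 0\leq i\leq N-1,\ \text{and}\ p\in (f_{-1}\circ\cdots\circ f_{-N})(W_{-N})\,\}
\]
is a nonempty compact subset of $W_0$, the second condition being interpreted via the covering property implied by correct alignment rather than true inverses. The family $\{K_N\}$ is nested and each $K_N$ is compact, so $\bigcap_N K_N\neq\emptyset$. Any point $p$ in this intersection, by continuity of each finite composition $f_i\circ\cdots\circ f_0$, satisfies $(f_i\circ\cdots\circ f_0)(p)\in W_{i+1}$ for all $i\in\mathbb{Z}$, as required.

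The main obstacle I anticipate is the careful construction and concatenation of homotopies in the finite case: one must ensure that the boundary non-intersection conditions hold throughout the composite deformation, not merely at its endpoints, and that transitions between consecutive charts $c_i,c_{i+1}$ do not destroy these conditions. This forces a serial, chart-by-chart deformation where, while deforming the $i$-th pair, all earlier and later stages are held at their endpoint configurations. Once this bookkeeping is handled, the Brouwer degree computation and the extraction of a limit point in the bi-infinite case are both routine.
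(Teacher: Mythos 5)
The paper does not prove this theorem itself; it is quoted from \cite{GideaZ04}, so the relevant comparison is with the argument given there. Your overall toolkit (Brouwer degree, homotopy to the linear models, compactness for the bi-infinite case) is the same family of ideas, and your compactness step for the bi-infinite assertion — nonempty nested compact sets $K_N$ in $W_0$ built from the finite result, whose intersection gives the shadowing point — is essentially the standard argument and is fine. The gap is in the finite/periodic case, and it is exactly the obstacle you flag but then dismiss as bookkeeping. Correct alignment of $(W_i,W_{i+1})$ gives a homotopy $h^{(i)}_t$ of $f_{i,c_i,c_{i+1}}$ defined on $(W_i)_{c_i}$ and controlled only by the two non-intersection conditions relative to $(W_{i+1})_{c_{i+1}}$; it does \emph{not} keep the deformed image inside $(W_{i+1})_{c_{i+1}}$, nor even inside $\mathrm{dom}(c_{i+1})$. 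Consequently, in your serial scheme the next factor $f_{i+1,c_{i+1},c_{i+2}}=c_{i+2}^{-1}\circ f_{i+1}\circ c_{i+1}$ must be evaluated at points $h^{(i)}_t(x,y)$ that may lie outside the chart domain where it is defined (at $t=1$ the model $(A_ix,0)$ sends much of the cube far outside it), so the concatenated deformation of the return map $F=c_0^{-1}\circ f_d\circ\cdots\circ f_0\circ c_0$ is not even well defined, let alone admissible on the boundary; and there is no reason the composition of two correct alignments is again a correct alignment. This is precisely why \cite{GideaZ04} does not homotope the composition at all: the periodic orbit is detected as a zero of a map on the \emph{product} of all the window cubes, of the schematic form $(x_0,\dots,x_d)\mapsto\bigl(x_0-\tilde f_d(x_d),\,x_1-\tilde f_0(x_0),\dots\bigr)$, where each transition map is homotoped to its linear model independently, on its own window only; admissibility of that homotopy on the boundary of the product follows coordinatewise from the exit/entry conditions of a single alignment, and the degree is computed as a product of nonzero factors.

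So to repair your proof you would have to replace the chart-by-chart deformation of the composed return map by this product-space construction (or an equivalent inductive degree-of-composition argument carried out on the subset of points whose partial orbits actually remain in the windows, using excision). Two smaller points: the "non-periodic variant" of the finite result that you invoke for the sets $K_N$ also needs an argument (in the product-space setting it comes almost for free, or by artificially closing the chain), and your claim that the composite linear model $A_d\cdots A_0$ again maps the boundary of the cube off the cube is true but deserves the one-line justification that each $A_i^{-1}$ maps the cube into its interior; as it stands you assert it without proof.
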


A sequence of windows $\{W_i\}_{i}$ as above will be referred at as a sequence of correctly aligned windows. Note that the verification of the correct alignment of the sequence amounts to verifying correct alignment relations between successive pairs $W_i$, $W_{i+1}$.
A consequence of this fact,  which is important for our applications,  is
that the concatenation of finite sequences of correctly aligned
windows is a finite sequence of correctly aligned windows. That is, if $W_0,\ldots, W_k$ forms a sequence of correctly aligned windows, and $W_k,\ldots, W_l$ forms a sequence of correctly aligned windows, then  $W_0,\ldots, W_l$ also forms a sequence of correctly aligned windows (for simplification, we omit here to specify the mappings under which the correct alignment is realized).

In the context of this paper, the maps $f_i$ from Theorem \ref{theorem:detorb}  will be  different powers of the time-$1$ map associated to the flow.

We emphasize again the difference between hyperbolic dynamics and correct alignment of windows.
To assert that an orbit is hyperbolic one needs to examine the expansion and contraction rates of the derivative of the map along the whole orbit. The concatenation of hyperbolic segments could fail to be hyperbolic (if the stable and unstable directions do not match). In contrast, to assert that a sequence of windows is correctly aligned, one only needs to verify that the image of one window under the map is correctly aligned with the next window in the sequence. Also, concatenations of finite sequences of correctly aligned windows are correctly aligned.

\subsection{Reduction to a discrete dynamical system}
\label{subsection:reduction}
We  reduce the perturbed geodesic flow
to  a discrete dynamical system by
considering the time-$1$ map of the flow  $\psi^\eps$ -- where the time refers to the
rescaled time $s$ in \eqref{eqn:hamrescaled} -- which we denote   $F_\eps$. Also, we denote by $\chi^\eps_1$ the time-$1$ map (relative to the rescaled time $s$) associated to the flow  $\chi^\eps$ on $N$.
Since $s=t/\eps$,
 the time-$1$ map relative to the rescaled  time $s$ is the time-$\eps$ map relative to the physical
time $t$.

The distinguished geometric objects for the perturbed geodesic flow, described in
the earlier sections, give rise to similar objects for
the discrete dynamical system.
We start by listing these objects and
summarizing their properties.
\begin{itemize}
\item[B1.] The map $F_\eps:T^*M\times N\to T^*M\times N$ is a
$C^{r -1}$-diffeomorphism.

\item[B2.] The manifold
$\tilde\Lambda_\eps=\Lambda_\eps\times N\subseteq T^*M\times N$  is a normally hyperbolic
invariant manifold for $F_\eps$, of dimension $(d+2)$; this manifold
has stable and unstable manifolds $W^s(\tilde\Lambda_\eps)$ and
$W^u(\tilde\Lambda_\eps)$, of dimension $(d+n+1)$.

\item [B3.] There exist exponential  rates
$0<\lambda_-<\lambda_+<\lambda_1<1<\mu_1<\mu_-<\mu_+$ such that
$\lambda_-<\|{DF_\eps}_{\mid E^s_{\eps,z}}\|<\lambda_+$,
$\mu_-<\|{DF_\eps}_{\mid E^u_{\eps,z}}\|<\mu_+$,
$\lambda_1<\|{DF_\eps}_{\mid E^c_{\eps,z}}\|<\mu_1$, where
$E^s_\eps$ and $E^u_\eps$ are the stable and unstable bundles in
the  decomposition $T_z(T^*M\times N)=T_z\tilde\Lambda_\eps
\oplus E_{\eps,z}^s\oplus E_{\eps,z}^u$. The above exponential rates
can be chosen independently of $\eps$.

\item [B4.] The stable and unstable manifolds
$W^s(\tilde\Lambda_\eps)$ and $W^u(\tilde\Lambda_\eps)$ have a
transverse intersection along a $(d+2)$-dimensional manifold
$\tilde\Gamma_\eps$.

\item [B5.] For the discrete dynamical system defined by $F_\eps$ there exists two  scattering maps $\tilde S^j_\eps:\tilde U_\eps^{j,-}\to \tilde U_\eps^{j,+}$ associated to two homoclinic channels $\tilde \Gamma^j_\eps$, $j=1,2$, satisfying Assumption (A4).   The set $\tilde U_\eps^{j,-}$ is of size $O(1)$ in the sense that there exist open sets $U^{j,-}\subseteq \Lambda_0$ such that $\tilde k_\eps(U^{j,-}\times N)\subseteq \tilde U_\eps^{j,-}$ for $j=1,2$, and for all $\eps$ sufficiently small, where $\tilde k_\eps:\tilde \Lambda_0=\Lambda_0\times N\to\tilde \Lambda_\eps$ is the parametrization of $\tilde \Lambda_\eps$ from Subsection \ref{subsection:scaled}.

\item[B6.] Consider the action-angle coordinates  $(J_\eps, \phi_\eps)$ on $\Lambda_\eps$. The restriction of $F_\eps $ to $\tilde\Lambda_\eps $
has the form
\[F_\eps(J_\eps, \phi_\eps, \theta)=(J_\eps+O(\eps^2), \phi_\eps+J_\eps+O(\eps^2), \chi^\eps_1(\theta)).\]

There exists $\tau>0$, independent of $\eps$, such that \[\frac{\partial (\pi_{\phi_\eps}\circ {F_\eps}_{\mid\Lambda_\eps})}{\partial J_\eps}(J_\eps, \phi_\eps) >\tau,\] for all
$(J_\eps, \phi_\eps, \theta)\in\tilde\Lambda_\eps$.
In particular ${F_\eps}_{\mid\Lambda_\eps}$ is an integrable twist map in the variables $(J_\eps, \phi_\eps)$, up
to order $O(\eps^2)$, with the twist coefficient lower bounded by $\tau$.

\item[B7.]  Each scattering map  $\tilde S^j_\eps:\tilde U_\eps^{j,-}\to \tilde U_\eps^{j,+}$,  associated to the  homoclinic channel $\tilde\Gamma^j$,  $j\in\{1,2\}$, is of the form
    \begin{eqnarray*}\tilde S^j_\eps(J_\eps^-, \phi_\eps^-, \theta^-)&=&(J_\eps^+, \phi_\eps^+, \theta^+),\quad\textrm{where}\\
 \phi_\eps^+&=& \phi_\eps^-+a+O(\eps^2),\\
 | J_\eps^+- J_\eps^-|&=&O(\eps^2),\\
 \theta^+&=& \theta^-.\end{eqnarray*}
 \item[B7.] There exists a sequence of elementary building blocks, of the type $B^j(J,\phi,\theta)$, $j=1,2$, as in  Subsection \ref{subsection:gainseq}, along which the scaled energy $H_\eps$ grows by $O(\eps)$ in a scaled time interval $\Delta s=O(1/\eps^2)$. Each building block consists of one application of one of the scattering maps, $\tilde S^j_\eps$, $j=1,2$, succeeded  by an orbit of the inner dynamics followed for a scaled time of $O(1)$. The succession  of  elementary building blocks is chosen as in Subsection \ref{subsection:gainseq}.
\end{itemize}

The general strategy to prove the Theorems \ref{thm:main1}, \ref{thm:main2}, \ref{thm:symbolic} is the following. We will choose a sequence of elementary building blocks as before, and will fix a  two-dynamics pseudo-orbit determined by it. We will prove that there exists a  true orbit that follows this pseudo-orbit.
Also, we will estimate the time it takes to such an orbit to perform the trip.

As explained in Subsection \ref{subsec:buildingblock}, an elementary building block consists of a segment of a homoclinic orbit $\psi^\eps_s(\tilde z_\eps)$, $s\in[T_-,T_+]$,  followed by a   trajectory segment $(\psi^\eps_s)_{\tilde\Lambda_\eps}(\tilde z_\eps^+)$ of the flow $\psi^{\eps}_s$ restricted to $\tilde \Lambda_\eps$, during a time interval of order $O(1)$.
Consider a sequence of $(n_1+1)$ successive elementary building blocks. For each one of them, we denote by $\tilde\gamma^\eps_{i}(s)$ the homoclinic segment of the elementary building block, and by
$\tilde\lambda^\eps_{i}(s)$ the trajectory segment of the flow $\psi^{\eps}_s$ restricted to $\tilde \Lambda_\eps$ corresponding to that building block, where $i=0,\ldots,n_1$. Also, we denote by $z^\eps_{i-1,i}$ the corresponding homoclinic point on $\tilde \gamma^\eps_i$, with $i\in\{1,\ldots,n_1\}$. We allow the considered homoclinic orbit segments to correspond to different homoclinic manifolds $\tilde\Gamma^j_\eps$, with $j=1,2$, but we do not make this distinction in the notation (for typographical reasons), since all estimates are uniform.

Each  trajectory segment that is a part of an elementary building block can be written as
$\tilde\gamma^\eps_{i}(s)=(\gamma^\eps_{i}(s),\theta(s))$, and $\tilde\lambda^\eps_{i}(s)=(\lambda^\eps_{i}(s),\theta(s))$, respectively.
Since a trajectory segment $\tilde\lambda^{\eps}_i(s)$ is followed for a time $O(1)$, the action $J_\eps$-coordinate along $\lambda^{\eps}_i(s)$  stays constant up to an $O(\eps^2)$ error.
Thus, to the  trajectory segment $\lambda^{\eps}_i(s)$  we can associate a level set $\{J_\eps=J_\eps^i\}$ of the $J_\eps$-coordinate in $\Lambda_\eps$, which is almost invariant up to $O(\eps^2)$.

For each $i\in\{0,\ldots,n_1\}$, denote $\mathcal{L}^\eps_i=\lambda^\eps_i\times N$; this is a $(1+d)$-dimensional manifold in $T^*M\times N$. The stable and unstable manifolds of $\mathcal{L}^\eps_{i}$ are $(n+d)$-dimensional manifolds
$W^s(\mathcal{L}^\eps_{i})=\bigcup_{z\in \mathcal{L}^\eps_{i}}W^s(z)$ and
$W^u(\mathcal{L}^\eps_{i})=\bigcup_{z\in \mathcal{L}^\eps_{i}}W^u(z)$, respectively, where the stable and unstable fibers of points are well defined due to the normal hyperbolicity of $\tilde\Lambda_\eps$. Note that these sets are not, in general, invariant, as their fibers are not invariant; we have
$F_\eps(W^s(z))\subseteq W^s(F_\eps(z))$ and $F_\eps^{-1}(W^u(z))\subseteq W^u(F_\eps^{-1}(z))$. By construction, we have that
for each $i\in\{1,\ldots,n_1\}$, $W^u(\mathcal{L}^\eps_{i-1})$ intersects transversally with
$W^s(\mathcal{L}^\eps_{i})$ along  $\{z^\eps_{i-1,i}\}\times N$.

To show the existence of an orbit that follows the pseudo-orbit determined by the given sequence of elementary building blocks,
we will use the stable and unstable   manifolds to build   a  chain of sets $\mathcal{L}^i_\eps$, with $i=0,\ldots,n_1$, around which we
will construct windows that are correctly aligned.
The  construction will be done according to the following sequence of steps, which we first describe informally below.

At the  first step, for each $i=1,\ldots,n_1$,
we construct a pair of windows $W^-_{i-1}$ and $W^+_{i}$ about
the heteroclinic intersection $(\{z^\eps_{{i-1},{i}}\}\times N)\cap\tilde\Gamma^j_\eps\subseteq  W^u(\mathcal{L}^\eps_{i-1})\cap W^s(\mathcal{L}^\eps_{i})$, such that
$W^-_{i-1}$ is correctly aligned with $W^+_{i}$ under the identity
mapping. Here the homoclinic intersection $\tilde\Gamma^j_\eps$ is one of the two homoclinic intersections defined by the distinguished homoclinic orbits $\gamma^j$,  $j\in\{1,2\}$, described in Assumption (A4).

At the second step we iterate $W^+_{i}$ forward in time
along $W^s(\mathcal{L}^\eps_{i})$  until its image is contained in some conveniently chosen neighborhood of  $\mathcal{L}^\eps_{i}$ in  $T^*M$, and we construct  a new window $\check W_{i}$ near $\mathcal{L}^\eps_{i}$, such that $W^+_{i}$ is correctly aligned with $\check W_{i}$ under some positive iterate of $F_\eps$.
Also, we iterate $W^-_{i-1}$ backwards in time along
$W^u(\mathcal{L}^\eps_{i-1})$   until its image is contained in some conveniently chosen neighborhood of
$\mathcal{L}^\eps_{i-1}$,  and we construct a new window
$\hat W_{i-1}$ about $\mathcal{L}^\eps_{i-1}$, such that
$\hat W_{i-1}$ is correctly aligned with $W^-_{i-1}$ under  some
positive iterate of $F_\eps$.
At this step, the neighborhoods of $\mathcal{L}^\eps_{i}$ and $\mathcal{L}^\eps_{i-1}$ are chosen so that the  system of coordinates from  Subsection \ref{subsection:coordinates}, near the normally hyperbolic invariant manifold $\tilde\Lambda_\eps$, is well defined in those neighborhoods. The $\Lambda_\eps$-component of the window $\hat W_{i-1}$ is  close to the level set $J_\eps=J_\eps^{i-1}$ of the action coordinate $J_\eps$, and the $\Lambda_\eps$-component of the  window $\check W_i$ is  close to the level set $J_\eps=J_\eps^{i}$ of the action coordinate $J_\eps$ on $\Lambda_\eps$.

At the third step we  align the window $\hat W_{i}$, corresponding to the heteroclinic
connection $W^u(\mathcal{L}^\eps_{i-1})\cap W^s(\mathcal{L}^\eps_{i})$,  to the window $\check W_{i}$, corresponding to the heteroclinic connection $W^u(\mathcal{L}^\eps_{i})\cap W^s(\mathcal{L}^\eps_{i+1})$. The $\Lambda_\eps$-components of these two windows are close to the same level set $J_\eps=J_\eps^i$ of the action coordinate. The construction at the third step concatenates the sequence of correctly aligned windows
constructed about one heteroclinic connection with the sequence of
correctly aligned windows constructed about the next heteroclinic
connection.

At the fourth step we concatenate short sequences of correctly aligned windows constructed as above, obtaining a long sequence of correctly aligned windows that follows the  pseudo-orbit underlying the sequence of elementary building blocks that achieves the desired energy growth.

The conclusion is that, once the windows have been constructed, the shadowing   result
Theorem \ref{theorem:detorb} will provide the existence of true
orbits that visit the windows in the prescribed order, hence these orbits will visit
  the prescribed level sets of the averaged action.

The construction of windows is  similar to that in
\cite{GideaLlave06}. Therefore we will describe most steps of the
construction succinctly. One step that is quite different is the
third step, in which we align two windows $\check W_i$ and $\hat
W_i$ about $\mathcal{L}^\eps_{i}$ under some iterate of the map
$F_\eps$. The difference is that in \cite{GideaLlave06} the map is a
twist map, while in our case, the map is a twist map in one
component and a time-$1$ map of some general flow in the other
component. We will explain this step of the construction in a  greater
detail.

Now we explain how the above strategy is used to prove the statements from Theorems  \ref{thm:main1}, \ref{thm:main2}, \ref{thm:symbolic}.

To prove that there exist trajectories along which the energy grows unboundedly we proceed as follow. We fix a potential $V\in\mathcal{V}'$, i.e., satisfying condition \textit{A4}.
Under the hypotheses of Theorem \ref{thm:main1} we choose an initial condition $(J_\eps^0,\phi_0,\theta_0)$, where $\theta_0$ is chosen to be a non-trivial uniformly recurrent point, $\phi_0$ is fixed as in condition \textit{A4}. Under the hypotheses of Theorem \ref{thm:main2} we choose the same initial condition with an arbitrary $\theta_0$.
Staring with this initial condition, we construct a finite sequence of elementary building blocks as in Subsection \ref{subsec:buildingblock}, with the  underlying pseudo-orbit
\[\tilde\gamma^\eps_0, \tilde\lambda^\eps_0,  \ldots, \tilde\gamma^\eps_{n_1}, \tilde\lambda^\eps_{n_1},\]
for some $n_1=O(1/\eps^3)$, along which the scaled energy grows from $E=1$ to $E=2$ in a scaled time $O(1/\eps^3)$.

This corresponds to a growth of the physical energy by $O(1/\eps^2)$ in a physical time $O(1/\eps^2)$. This physical energy growth is linear in the physical time.  If the initial energy level of the physical energy is $E_*$, the physical energy at the end of this sequence of elementary building blocks is $2E_*$.

The windowing construction from above above provides  a finite sequence of correctly aligned windows, whose $\Lambda_\eps$-components are close to these level sets. Any shadowing orbit obtained from Theorem \ref{theorem:detorb} will result in an growth of the scaled energy of $O(1)$ in a scaled time $O(1/\eps^3)$.

Note that, as remarked before, segments of correctly aligned windows can be concatenated. Since the constructions are uniform for all $\eps<\eps_0$, we can construct infinitely many segments that make the energy grow to $\infty$, and concatenate them.

More precisely, to obtain orbits whose scaled energy grows to infinity we proceed as in Subsection \ref{subsection:unbounded}. We reset  $\eps$ to $\eps=1/{\sqrt {2E_*}}$ and we repeat the construction of a sequence of elementary building blocks  whose scaled energy grows from $E=1$ to $E=2$ in a scaled time $O(1/\eps^3)$. The energy growth rate is still linear, and  at least as large as in the previous step. The physical energy grows from $2E_*$ to $4E_*$. Then we concatenate the segment of correctly aligned windows that grows the physical energy from $E_*$ to $2E_*$, with the segment that grows the physical energy from $2E_*$ to $4E_*$. The concatenation of these segments of correctly aligned windows constructed  is also a segment of correctly aligned windows. This construction of segments of elementary building blocks and corresponding segments of correctly aligned windows can be repeated indefinitely.
The  shadowing orbit that visits the resulting infinite sequence of correctly aligned windows yields an infinite physical energy growth at a linear rate with respect to the physical time.    In this way we obtain the statement  on the  existence of orbits with unbounded energy growth from   Theorems \ref{thm:main1} and \ref{thm:main2}.

To obtain an orbit whose scaled energy follows a prescribed energy path, we note that a path $\mathcal{E}:[0,\infty)\to\mathbb{R}$ determines, via time-discretization,  a sequence $J_\eps$-action level sets $\{\sqrt{2\mathcal{E}(k)}\}_{k\in\mathbb{N}}$ in $\Lambda_\eps$.  As in Subsection \ref{subsection:symbolic}, we construct a sequence of elementary building blocks whose corresponding $J_\eps$-values visit, in the prescribed order,   the values $\sqrt{2\mathcal{E}(k)}$, $k\in\mathbb{Z}$. Since, in general, one cannot move from one value $\sqrt{2\mathcal{E}(k)}$ to the next $\sqrt{2\mathcal{E}(k+1)}$ via a single elementary building block, the construction yields a sequence of action level sets $\{J^i_\eps\}_{i\in\mathbb{Z}}$, such that successive elementary building blocks correspond to successive value of $J^i_\eps$, and there exists a subsequence $\{i_k\}_{k\in\mathbb{Z}}$ of $\mathbb{N}$ such that $J^{i_k}_\eps=\sqrt{2\mathcal{E}(k)}$. Thus, we are in the same situation as described above, and we can proceed in the same way.

\subsection{Construction of windows}\label{section:constrwindows}

In this subsection we will work out the details of the construction of correctly aligned windows described earlier.

We consider a
finite sequence of elementary building blocks as in Subsection \ref{subsec:buildingblock}, with the  corresponding pseudo-orbit
\[\tilde\gamma^\eps_0, \tilde\lambda^\eps_0,  \ldots, \tilde\gamma^\eps_{n_1}, \tilde\lambda^\eps_{n_1},\]
for some $n_1=O(1/\eps^3)$. The corresponding action $J$ level sets corresponding to the curves  $\lambda^\eps_i$ in $\Lambda_\eps$, $i=0,\ldots,n_1$, are
\[J^{0}_\eps, J^{1}_\eps, \ldots, J^{n_1}_\eps.\]

\subsubsection{A system of coordinates near a normally hyperbolic invariant manifold}\label{subsection:coordinates}
The construction of windows requires a coordinate
systems relative to which the windows can be defined.

We now describe a general construction of a system of coordinates near a normally hyperbolic invariant manifold.
Given a normally hyperbolic invariant manifold $\Lambda\subseteq M$ for $F:M\to M$, the  bundle $N_x=E_x^u\oplus E_x^s$, $x\in\Lambda$,  gives a concrete realization of the normal bundle $N_{\Lambda}$. That is, $N_x$ is the complementary subspace to $T_x\Lambda$, i.e., $T_xM=T_x\Lambda\oplus N_x$, for all $x\in\Lambda$.

For any point $p$ in a sufficiently small neighborhood a neighborhood $\mathcal{N}(\Lambda)$ of $\Lambda$ in $M$, we can find  unique $x\in\Lambda$, $s\in E^s_x$, and $u\in E^u_x$, with $u,s$ small,  such that $p=\exp_x(s+u)$. Therefore, it is natural to use $(x,s,u)$ as a system of coordinates in $\mathcal{N}(\Lambda)$. We denote $h(x,s,u)=\exp_x(s+u)$, where $\exp:TM\to M$ is the exponential map.

In general, this system of coordinates is as smooth as the bundles $E^s,E^u$, whose regularity is limited by the regularity of the map $F$ and by the ratios of the exponential rates.

In our case, because the exponential rate of $F_\eps$ on the manifold $\tilde\Lambda_\eps$ is close to $0$ for $\eps$ small enough, we have that the corresponding maps $h_\eps$, $h^{-1}_\eps$ are $C^r$.  We also note that if we express $F_\eps$ in terms of the coordinate mapping, we have, for $F_\eps\in C^2$,
\[h_\eps^{-1}\circ F_\eps\circ h_\eps(x,s,u)=(F_\eps(x),DF_\eps(x)s,DF_\eps(x)u)+O(\delta^2),\]
where $\delta>0$ is the size of the neighborhood $\mathcal{N}(\Lambda_\eps)$. Furthermore
\[Dh^{-1}_\eps\circ F_\eps\circ h_\eps^{-1}(x,s,u)=DF_\eps(x)+O(\delta).\]

If we express $F_\eps$ relative this system of coordinates, $\tilde F_\eps=h_\eps^{-1}\circ F_\eps\circ h_\eps$ is close to the tangent map
\[TF_\eps(x,s,u)=(F_\eps(x), DF_\eps(x)s,DF_\eps(x)u).\]

Indeed, we have
\[\begin{split}
\|\tilde F_\eps-TF_\eps\|_{C^0}\leq C\delta^2,\\
\|\tilde F_\eps-TF_\eps\|_{C^1}\leq C\delta,
\end{split}
\]
for some $C>0$, where $\delta$ is the size of the neighborhood $\mathcal{N}(\Lambda_\eps)$. The constants $\delta$ and $C$ can be chosen independent of $\eps$.

In particular, by considering a sufficiently small neighborhood  $\mathcal{N}(\Lambda_\eps)$ we can ensure that the map $F_\eps$ is contracting the $s$-components, expanding in the $u$-components, and more or less neutral in $x$.

Noting that we can describe the manifold $\tilde\Lambda_\eps$ via the coordinates $(J,\phi,\theta)$, the above construction provides us with a $C^1$-smooth coordinate
system $(J,\phi, \theta,  s, u)$ in a neighborhood $\mathcal{N}(\tilde \Lambda_\eps)$ of $\tilde \Lambda_\eps$. Relative to  this coordinate system
the map $F_\eps$ can be approximated by a skew product of a map acting in the center
directions of $\tilde\Lambda_\eps$ and a map acting in the
hyperbolic directions.  To simplify notation,  we will not make explicit the dependence on $\eps$ of the coordinate systems. Also, from this point on we will denote $F_\eps$ by $F$.

We will use this system of coordinates to construct windows near $\Lambda_\eps$, and, in particular, about the sets $\mathcal{L}^\eps_i$.

We will also need to  construct windows about the heteroclinic intersections
$(\{z^\eps_{i-1,i}\}\times N)\cap \tilde\Gamma_\eps$  of
$W^u(\mathcal{L}^\eps_{i-1})$ with $W^s(\mathcal{L}^\eps_{i})$.
For this, we propagate the above  coordinate system   from a
neighborhood of the set $\mathcal{L}^\eps_{i-1}$ in
$\tilde\Lambda_\eps$ along the unstable manifold
$W^u(\mathcal{L}^\eps_{i-1})$, giving rise to a
$C^1$-smooth coordinate system $(J^-,\phi^-, \theta,  s^-, u^-)$ in a neighborhood of
$(\{z^\eps_{i-1,i}\}\times N)\cap \tilde\Gamma_\eps$.
Also, we propagate the same coordinate system from a
neighborhood of the leaf $\mathcal{L}^\eps_{i}$ along the
stable manifold $W^s(\mathcal{L}^\eps_{i})$ to a neighborhood
of the heteroclinic intersection $(\{z^\eps_{i-1,i}\}\times N)\cap \tilde\Gamma_\eps$, producing a $C^1$-smooth
coordinate system $(J^+,\phi^+, \theta,  s^+, u^+)$ in the neighborhood of
$(\{z^\eps_{i-1,i}\}\times N)\cap \tilde\Gamma_\eps$.

The two coordinate systems differ one from the
other by order $O(1)$, that is, if $\Phi$ denotes the coordinate
change from one system to another then $C_3^{-1}\leq \|D\Phi\|\leq
C_3$ uniformly in a compact neighborhood of $\tilde \Gamma_\eps$,
for some $C_3>1$. Since we obtain two coordinate systems around $(\{z_{i-1,i}\}\times N)\cap \tilde\Gamma_\eps$, we can construct windows and verify their correct alignment in either coordinate system. The sizes of the components of a window in one coordinate system will differ by the sizes in the other coordinate system by some multiplicative constants that are independent of $\eps$ and, due to compactness,  they can be chosen to be the same for all heteroclinic intersections.

\subsubsection{Choice of constants}\label{subsection:constants}
We choose some constants that will be used throughout the proof.
Let $a^\pm$ be defined as follows
\[a^\pm=\sup_{z\in H^{-1}[1,2]\cap\tilde\Gamma_\eps}d(z^\pm, z), \]
where the distance is measured along the stable (unstable) fiber through $x$, respectively.
Since $H^{-1}[1,2]\cap\tilde\Gamma_\eps,H^{-1}[1,2]\cap\tilde\Lambda_\eps$ are compact, then $0<a^\pm<\infty$.

First we choose some constant $\eps_1>0$, independent of $\eps$, and then we
choose the positive constants
$\alpha^-,\alpha^+,\check\alpha,\hat\alpha$ and
$\beta^-,\beta^+,\check\beta,\hat\beta$, independent of $\eps$, such
that
\begin{eqnarray}
\label{eqn:const1} 3\eps_1<\alpha^-=\hat\alpha=\check\alpha
<C^{-1}_3\alpha^+,\\
\label{eqn:const2} 3\eps_1<\beta^+=\hat\beta=\check\beta
<C^{-1}_3\beta^-.
\end{eqnarray}

Second,  we choose positive integers $N,M$ sufficiently large
so that
\begin{eqnarray}
\label{eqn:const3}\lambda^N_+(a^++\alpha_+)<2\eps_1,\, \check\beta+\eps_1<\mu_-^N\beta^+,\, \mu^{-M}_-(\beta^-+a^-)<2\eps_1, \\
\label{eqn:const4} F^N(\tilde\Gamma_\eps)\subseteq \mathcal{N}(\tilde\Lambda_\eps),\, F^{-M}(\tilde\Gamma_\eps)\subseteq \mathcal{N}(\tilde\Lambda_\eps),
\end{eqnarray}
where $\mathcal{N}(\tilde\Lambda_\eps)$ is the neighborhood of $\tilde\Lambda_\eps $  where the coordinate system described in Subsection \ref{subsection:coordinates} is defined. Note that there exist finite $N,M$ as in \eqref{eqn:const4} due to the definition of $\tilde\Gamma_\eps$. Since   the dynamical system \eqref{eqn:hamrescaled} tends to a product system when $\eps\to 0$, then  we can choose  $N,M$ to be independent of $\eps$ for all  $\eps$  sufficiently small.

Third, we choose a positive integer $K$, satisfying the following condition.
\begin{equation}
\label{eqn:const5}2\check\gamma+\hat\gamma+3\eps_1 <K\tau\check\delta,
\end{equation}
where $\tau$ is the twist constant from (B6).
Such a finite $K$ exists, for all small enough $\eps$, since  the inner map $F$ is a twist map in the $(J, \phi)$-variables.

Fourth, we choose the positive constants
$\delta^-,\delta^+,\check\delta,\hat\delta,\gamma^-,\gamma^+,\check\gamma,\hat\gamma$,
such that they satisfy the following inequalities
\begin{eqnarray}
\label{eqn:const6}\check\delta<\check\delta+\eps_1<\delta^+<C^{-1}_3\gamma^-<\gamma^-
<\gamma^-+M\tau\delta^-+\eps_1<\hat\gamma,\\
\label{eqn:const7}\check\delta<\check\delta+\eps_1<\hat\delta<\hat\delta+\eps_1<\delta^-<C_3\delta^-
<\gamma^+<\gamma^++N\tau\delta^++\eps_1<\check \gamma.
\end{eqnarray}
Additionally, we want these constants to be sufficiently small. We will explain later in the argument how small should they be, but we emphasize here that the smallness condition is independent of $\eps$ and can be made precise from the beginning of the argument.

Fifth, we make $\eps$ even smaller, if necessary, as described below. Note that the estimates on the inner map ${F}_{\mid \tilde\Lambda_\eps}$ and on the outer map $S_\eps$ involve some error terms of orders $O(\eps^2)$, as in B6 and B7. Due to the compactness of $H^{-1}[1,2]\cap\tilde \Lambda_\eps$ and $H^{-1}[1,2]\cap\tilde\Gamma_\eps$, these error terms can be bounded from
above  by  $C_4\eps^2$, for some   constant $C_4>0$ independent of $\eps$. Now we choose $\eps$ sufficiently small so that
\begin{equation}\label{eqn:const8}
NC_4\eps^2<\eps_1,\, MC_4\eps^2<\eps_1, KC_4\eps^2<\eps_1.
\end{equation}
Thus, when we will estimate the error terms when iterating the inner map or the outer map up to $\max\{M,N,K\}$ times, we will be able to conclude that the error terms are always less than $\eps_1$.

From now on $\eps$ is sufficiently small and fixed.

\subsubsection{Step 1.} Let us consider the heteroclinic
intersection $W^u(\mathcal{L}^\eps_{i-1})$ with
$W^s(\mathcal{L}^\eps_{i})$ at $\{z^\eps_{i-1,i}\}\times N$. As seen before,
the normal hyperbolicity implies that there exist $z_{i-1,i}^{\eps,-}\in
\mathcal{L}^\eps_{i-1}$ and $z_{i-1,i}^{\eps,+}\in \mathcal{L}^\eps_{i}$
such that $z^\eps_{i-1,i}\in W^u(z_{i-1,i}^{\eps,-})\cap W^s(z_{i-1,i}^{\eps,+})$. Let
$a^-_{i-1}$ be the distance between $z^\eps_{i-1, i}$ and $z_{i-1,i}^{\eps,-}$
measured along $W^u(\mathcal{L}^\eps_{i-1})$,  and let
$a_{i}^+$ be the distance between $z^\eps_{i-1, i}$ and $z_{i-1,i}^{\eps,+}$
measured along $W^s(\mathcal{L}^\eps_{i})$.  We have that
$a^+_i<a^+$ and $a^-_i<a^-$.

At this step we construct a pair of windows $W^-_{i-1}, W^+_{i}$
about $(\{z^\eps_{i-1, i}\}\times N)\cap\tilde\Gamma_\eps$, such that $W^-_{i-1}$ is correctly
aligned with $W^+_{i}$ under the identity mapping.

We define the window $W^-_{i-1}$ in the coordinates
 $(s^-, u^-,\phi^-,J^-,\theta)$  of the type
\[W^-_{i-1}= (R^{
s^-}_{i-1}\times R^{u^-}_{i-1})\times
(R^{\phi^-}_{i-1}\times R^{J^-}_{i-1})\times
R^{\theta}_{i-1} ,\] where $R^c$ denotes a rectangle in
the coordinate $c$. We will think of $W^-_{i-1}$ as a product of
three window  components: $R^{s^-}_{i-1}\times R^{u^-}_{i-1}$,
corresponding to the hyperbolic coordinates, $R^{\phi^-}_{i-1}\times R^{J^-}_{i-1}$,
corresponding to the angle-action coordinates, and $R^{
\theta}_{i-1} $ corresponding to the external system on $N$.

We define the exit set $(W^-)^{\rm exit}_{i-1}$ of $W^-_{i-1}$ by
\begin{eqnarray*}(W^-)^{\rm exit}_{i-1}=&&(R^{
s^-}_{i-1}\times \partial R^{u^-}_{i-1})\times
(R^{\phi^-}_{i-1}\times R^{J^-}_{i-1})\times
R^{\theta}_{i-1}  \\
& \cup& (R^{
s^-}_{i-1}\times R^{u^-}_{i-1})\times
(\partial R^{\phi^-}_{i-1}\times R^{J^-}_{i-1})\times
R^{\theta}_{i-1}  \\ & \cup&(R^{
s^-}_{i-1}\times R^{u^-}_{i-1})\times
(R^{\phi^-}_{i-1}\times R^{J^-}_{i-1})\times
\partial R^{\theta}_{i-1}.
\end{eqnarray*}
This means that the exit directions of $W^-_{i-1}$ correspond to the
unstable direction  $u^-$ of the hyperbolic window
$R^{s^-}_{i-1}\times R^{u^-}_{i-1}$, to the
angle direction  $\phi^-$ of the action-angle window
$R^{\phi^-}_{i-1}\times R^{J^-}_{i-1}$, and to all directions of  the external-state window $R^{\theta}_{i-1}$. This definition of the exit set follows
the construction of product of windows described in \cite{GideaLlave06}.

Similarly, we define the window $W^+_{i}$ in the coordinates $(
s^+, u^+,  \phi^+, J^+,
\theta)$
 to be given by the product
\[W^+_{i}=(R^{
s^+}_{i}\times R^{u^+}_{i})\times
(R^{\phi^+}_{i}\times R^{J^+}_{i})\times
R^{\theta}_{i} ,\] and its exit set $(W)^{\rm exit}_{i}$ by
\begin{eqnarray*}
(W^+)^{\rm exit}_{i}=&&(R^{
s^+}_{i}\times \partial R^{u^+}_{i})\times
(R^{\phi^+}_{i}\times R^{J^+}_{i})\times
R^{\theta}_{i} \\&\cup&(R^{
s^+}_{i}\times R^{u^+}_{i})\times
(R^{\phi^+}_{i}\times \partial R^{J^+}_{i})\times
R^{\theta}_{i} \\ & \cup& (R^{
s^+}_{i}\times R^{u^+}_{i})\times
(R^{\phi^+}_{i}\times R^{J^+}_{i})\times
\partial R^{\theta}_{i}.
\end{eqnarray*}

The exit directions of $W^+_{i}$ correspond to the
direction $u^+$ of $R^{s^+}_{i}\times R^{
u^+}_{i}$, to the direction    $J^+$ of
$R^{\phi^+}_{i}\times R^{J^+}_{i}$, and to all
directions of $R^{\theta}_{i}$. Note that the only
difference in the exit directions of $W^+_{i}$ from
$W^-_{i-1}$ is the switching from the direction $\phi^-$ to
the direction $J^+$ in the angle-action components. Since we are
not using yet any dynamics  in constructing these windows, this
switching in the exit direction of the action-angle
components may seem arbitrary. The reason for this switching  will
become apparent later when we align windows by the inner map:  when an action-angle component is iterated under the inner dynamics until it crosses another action-angle   component, the twist property of the inner map causes  the first window to have its action direction stretched across  the second window along its angle direction.  Therefore we will
have a switching of the exit directions due to the alignment under
the inner map. By having another switching of the exit direction at
the heteroclinic intersection, we ensure the consistency of the correct
alignment in the two-step process. This will be explained in greater
detail at  Step 3.

We set the sizes of the rectangular components of the windows as follows:
\begin{eqnarray}\label{eqn:order1}
\|R^{s^-}_{i-1}\|=\alpha^-,\, \|R^{u^-}_{i-1}\|=\beta^-\\\nonumber\|R^{\phi^-}_{i-1}\|=\gamma^-,\,\|R^{
J^-}_{i-1}\|=\delta^-.
\end{eqnarray}
\begin{eqnarray}\label{eqn:order1b}
\|R^{s^+}_{i}\|=\alpha^+,\,\|R^{u^+}_{i}\|=\beta^+,\\\nonumber
\|R^{\phi^+}_{i}\|=\gamma^+,\,\|R^{J^+}_{i}\|=\delta^+.
\end{eqnarray}
Above, by $\|R^c\|$ we mean the diameter of the rectangle $R^c$ of the projection onto the coordinate~$c$.

We want to ensure that $W^-_{i-1}$ is correctly aligned with $W^+_i$
under the identity mapping. The two windows are defined in two
different coordinate systems, so we need to use the coordinate
change $\Phi$ to compare them relative to the same coordinate
system. We therefore require that $\Phi(R^{
s^-}_{i-1})\subseteq \textrm{int} (R^{s^+}_{i})$,
$\textrm{int} (\Phi(R^{u^-}_{i-1}))\supseteq R^{u^+}_{i}$,
$\textrm{int} (\Phi(R^{\phi^-}_{i-1}))\supseteq R^{J^+}_{i}$,
$\Phi(R^{J^-}_{i-1})\subseteq \textrm{int} (R^{
\phi^+}_{i})$, and $R^{\theta }_{i}\subseteq
\textrm{int}(\Phi(R^{\theta }_{i-1}))$. These conditions
imply that $W^-_{i-1}$ is correctly aligned with $W_i^+$ under the
identity mapping. These conditions are consistent with the size
specifications in \eqref{eqn:order1} and \eqref{eqn:order1b}, due
to the choice of constants  in \eqref{eqn:const1},
\eqref{eqn:const2}, \eqref{eqn:const3}, and \eqref{eqn:const4}.

\subsubsection{Step 2.} At this step we take a  forward iterate  of the window $W^+_{i}$
along the stable manifold $W^s(\mathcal{L}^\eps_{i})$, and  we align its image with a window
$\check W_{i}$ near $\tilde\Lambda_\eps$. Similarly, we take a  backwards iterate of the window   $W^-_{i}$ along the unstable manifold $W^u(\mathcal{L}^\eps_{i-1})$, and we align its image with a window $\hat W_{i}$
near $\tilde\Lambda_\eps$.

First, we consider the positive iterate $F^{N}(W_i^+)$ of $W_i^+$, with $N$ as in Subsection \ref{subsection:constants}.  Since the window $W^+_i$ has been defined in the coordinate system $( s^+,  u^+,  \phi^+, J^+, \theta)$ near $\tilde\Gamma_\eps$, which was obtained by propagating the  system near $\tilde \Lambda_\eps$ along the stable manifold,  and  $N$  as in \eqref{eqn:const4}, the image  $F^{N}(W_i^+)$  is naturally defined as a multi-dimensional rectangle in  the coordinate system $( s,  u,
\phi, J, \theta)$ defined in  $\mathcal{N}(\mathcal{L}^\eps_{i})$. The dynamics in these coordinates is the skew product of the dynamics in the center directions with the dynamics in the hyperbolic directions.

The twist condition  satisfied by $F$ in the $(\phi,J)$ variables, described in (B6), determines a sheering
of the $\phi$-direction by a quantity of $\tau J$ per iterate along each level set of the action variable $J$,
modulo an error  up to $\eps_1$; also, the $J$-coordinate is preserved up to  $\eps_1$. Since $F^{N}(W^+_i)$ is in $\mathcal{N}(\tilde\Lambda_\eps)$,  there exists a rectangle $\check R^{\phi}_{i}\times
\check R^{ J^+}_{i}$ in the  $( \phi, J)$ coordinate such that the $( \phi, J)$ component of $W^+_i$ is correctly aligned under
$F^{N}$ with  $\check R^{\phi}_{i}\times\check R^{ J^+}_{i}$. Since the size of the $( \phi, J)$  component of $W^+_i$  is  $\gamma^-\times\delta^-$, we can choose  $\check R^{\phi}_{i}\times
\check R^{ J^+}_{i}$ to be of size $\|\check
R^{\phi}_{i}\|=\check\gamma>\gamma^++N\tau\delta^++\eps_1$, and
$\|\check R^{ J^+}_{i}\|=\check\delta<\delta^+-\eps_1$. These inequalities are justified by \eqref{eqn:const6} and \eqref{eqn:const7}.

In the $\theta$-direction the map $F$ acts as the time-$\eps$ map $\chi$ of the flow $\chi$.
To achieve correct alignment of the windows in the $\theta$-variable,  it is sufficient to choose $\check R^{
\theta}_{i}$ a topological rectangle in the interior of $\chi^{N}(R^{
\theta+}_{i})$.


The distance between the point $F^{N}(z^\eps_{i-1,i})\in
F^{N}(W^+_i)$ and $F^{N}(z^{\eps,+}_{i-1,i})$, measured along the
stable manifold $W^s(\mathcal{L}^\eps_{i})$, is less than
$\lambda_+^{N}a^+$.

The size of the projection of $F^{N}(W^+_i)$ onto the $s$-coordinate  is
 at most $\lambda_+^{N} \alpha^++\eps_1$. The size of the $u$-component of
$F^{N}(W^+_i)$ is at least $\mu_- ^{N}\beta^+-\eps_1$. By the choice of the coordinates in Subsection \ref{subsection:coordinates},  the
hyperbolic directions of $F^N(R^{s^+}_{i}\times R^{u^+}_{i})$
coincide with the hyperbolic directions in $\mathcal{N}(\tilde\Lambda_\eps)$.
Since by \eqref{eqn:const1} and \eqref{eqn:const3} we have
$\check\alpha>\lambda^{N}(\alpha^++a^+)+\eps_1$, and by
\eqref{eqn:const2} and \eqref{eqn:const4}, we have $\check
\beta+\eps_1<\mu_-^{N }\beta^+$, then we can construct a rectangle
$\check R^{ s}_{i}\times \check R^{ u}_{i}$ in the hyperbolic variables, of
size $\check\alpha\times\check\beta$,
such that $\check R^{ s^+}_{i}\times \check R^{
u^+}_{i}$ is correctly aligned with $\check R^{
s}_{i}\times \check R^{u}_{i}$ under $F^{N }$, relative to the
the hyperbolic variables.

Through these choices,  we construct a new window $\check W_i$ about
$z^{\eps,+}_{i-1,i}$, given by \[\check W_{i}=(\check R^{
s}_{i}\times \check R^{u}_{i})\times(\check
R^{\phi}_{i}\times \check R^{ J}_{i})\times \check
R^{ \theta}_{i},\] in the coordinates
 $( s, u,  \phi, J,
\theta)$, such that $F^{N}(W_i^+)$ is correctly aligned
with $\check W_i$ under the identity mapping, or, equivalently,
$W_i^+$ is correctly aligned with $\check W_i$ under $F^{N}$.
Its exit set $(\check W)^{\rm exit}_{i}$ is given by
\begin{eqnarray*}(\check W)^{\rm exit}_{i}=&&(\check R^{
s}_{i}\times\partial \check R^{ u}_{i})\times (\check
R^{\phi}_{i}\times \check R^{ J}_{i})\times \check
R^{ \theta}_{i}
\\ &\cup& (\check R^{
s}_{i}\times \check R^{u}_{i})\times (\check
R^{\phi}_{i}\times \partial\check R^{J}_{i})\times
\check R^{\theta}_{i}\\ &\cup& (\check R^{s}_{i}\times\check R^{u}_{i})\times (\check
R^{\phi}_{i}\times \check R^{J}_{i})\times
\partial \check R^{\theta}_{i}.
\end{eqnarray*}

We note that the correct alignment of $W_i^+$ with  $\check W_i$  under $F^{N}$ follows from the correct alignment of the window components, according to Definition \ref{defn:corr},  and the product property from \cite{GideaLlave06}.
The time it takes to achieve this alignment is $N$, which
is independent of $\eps$, and so is of order $O(1)$.

In a similar fashion, we construct a new window  $\hat
W_{i-1}$ near $\mathcal{L}^\eps_{ J_{i-1}}$ such that $F^{M}(\hat W_{i-1})$
is correctly aligned with $W^-_{i-1}$ under the identity mapping,
or, equivalently, $\hat W_{i-1}$ is correctly aligned with
$W^-_{i-1}$ under $F^{M}$, for some ${M}$ as in Subsection \ref{subsection:constants}. For this purpose, we take a negative iterate $F^{-M}(W^-_{i-1})$  of $W^-_{i-1}$   and we construct a new window
$\hat W_{i-1}$ about $z^{\eps,-}_{i-1,i}$, of the following type

\[\hat W_{i-1}=\hat R^{s}_{i-1}\times \hat R^{
u}_{i-1}\times\hat R^{\phi}_{i-1}\times \hat R^{
J}_{i-1}\times \hat R^{ \theta}_{i-1},\] in the
coordinates
 $( s, u,  \phi, J,
\theta)$,   with the exit set given by \begin{eqnarray*}(\hat W)^{\rm
exit}_{i-1}=&&\hat R^{s}_{i-1}\times\partial \hat R^{
u}_{i-1}\times \hat R^{\phi}_{i-1}\times \hat R^{
J}_{i-1}\times \hat R^{ \theta}_{i-1}
\\ &\cup& \hat R^{s}_{i-1}\times \hat R^{u}_{i-1}\times \partial\hat
R^{\phi}_{i-1}\times \hat R^{J}_{i-1}\times \hat
R^{\theta}_{i-1}\\ &\cup& \hat R^{
s}_{i-1}\times\hat R^{ u}_{i-1}\times \hat
R^{\phi}_{i-1}\times \hat R^{ J}_{i-1}\times
\partial \hat R^{ \theta}_{i-1}.
\end{eqnarray*}

We choose the size of the window components to be $\|\hat R_{i-1}^{ s}\|=\hat\alpha$, $\|\hat R_{i-1}^{ u}\|=\hat\beta$, $\|\hat R_{i-1}^{ \phi}\|=\hat\gamma$, and $\|\hat R_{i-1}^{ J}\|=\hat\delta$.
By the choice of the coordinates in Subsection \ref{subsection:coordinates}, we can choose    $\hat
R^{ s}_{i-1}\times \hat R^{ u}_{i-1}$ so that it is correctly
aligned with $R^{ s -}_{i-1}\times R^{ u -}_{i-1}$
under $F^{M}$. By  condition \textit{B6}  we can choose
$\hat
R^{ \phi}_{i-1}\times \hat R^{ J}_{i-1}$ so that it is correctly
aligned with $R^{ \phi -}_{i-1}\times R^{ J -}_{i-1}$
under $F^{M}$. In the $\theta$-variable  we choose $\hat R^{\theta}_{i-1}$ so that
$R^{\theta -}_{i-1}\subseteq \textrm{int}(\chi^{M}(\hat
R^{\theta}_{i-1}))$.

We also require that $(\hat R^{\phi}_{i-1}\times
\hat R^{ J}_{i-1})\times \hat R^{\theta}_{i-1}$ is contained, via the parametrization $k$ described in Subsection \ref{subsection:scaled},  in $U^-\times N\subseteq k^{-1}(\tilde U^-)$.

Now we verify that the choice of the sizes of the window components is compatible with these correct alignment relations.
Using \eqref{eqn:const1}, \eqref{eqn:const2}, \eqref{eqn:const4}, we can
ensure that $\hat\beta>\mu_-^{-M}(\beta^-+a^-)+\eps_1$ and
$\hat\alpha+\eps_1<\lambda^{-M}_+\alpha^-$. By \eqref{eqn:const6} and \eqref{eqn:const7} we can
ensure that $\hat \delta+\eps_1<\delta^-$ and
$\hat\gamma>\gamma^-+{M}\tau\delta^-+\eps_1$, where the term
$M\tau\delta^-$ represents  the effect of the twist map on $R^{
\phi -}_{i-1}\times R^{ J -}_{i-1}$ under $M$ negative
iterates.

By the product property of correctly aligned windows,  we obtain that $F^{M}(\hat W_{i-1})$ is correctly aligned
with $ W_{i-1}^-$ under the identity mapping, or, equivalently,
$\hat W_{i-1}$ is correctly aligned with $W_i^-$ under $F^{M}$.

We note that to achieve the correct alignment of the windows in Step 2 we do not use the Lambda Lemma as in \cite{Marco96,FontichM00,GideaLlave06}. The role of the Lambda Lemma in this paper is taken by the choice of  coordinates   coordinates near $\tilde\Lambda_\eps$ from Subsection \ref{subsection:coordinates}, that are extended along the stable and unstable manifolds to produce convenient coordinate systems about $\tilde\Gamma_\eps$.

\subsection{Step 3.} We consider the heteroclinic intersection
$W^u(\mathcal{L}^\eps_{i-1})\cap W^s(\mathcal{L}^\eps_{i})$, and the heteroclinic intersection
$W^u(\mathcal{L}^\eps_{i})\cap W^s(\mathcal{L}^\eps_{i+1})$.

By applying the  Step 2 for each heteroclinic connection  we obtain a pair of windows
$\hat W_i$ and $\check W_i$, both located about the set $\mathcal{L}^\eps_{i}$.

At this step we want to get $\check W_{i}$ correctly aligned with
$\hat W_{i}$ under some iterate $F^{K}$. We pointed out earlier
that the dynamics in  the coordinates $(\phi, J,\theta,  u, s)$ is the skew
product of the dynamics in the center directions and the dynamics in
the hyperbolic directions.

The first task is to ensure the correct alignment of the rectangle
$\check R^{ u}_{i}\times \check R^{ s}_{i}$, of
exit set $\partial\check R^{ u}_{i}\times \check R^{
s}_{i}$, with the  rectangle $\hat R^{ u}_{i}\times\hat R^{ s}_{i}$,
of exit set $\partial\hat R^{u}_{i}\times\hat R^{ s}_{i} $.
These rectangles are defined in the same coordinate system on $\mathcal{N}(\tilde\Lambda_\eps)$.
The correct alignment reduces to ensuring the following inclusions $\textrm{int}(F^{K}(\check
R^{ u}_{i}))\supseteq \hat R^{ u}_{i}$ and
$F^{K_i}(\check R^{ s}_{i})\subseteq \textrm{int}(\hat
R^{ s}_{i})$. Since the unstable directions get uniformly
expanded and the stable directions get uniformly contracted, it is
sufficient for the correct alignment to have $\check R^{
u}_{i}\times \check R^{ s}_{i}$ and  $\hat R^{
u}_{i}\times \hat R^{ s}_{i}$ of the same size
$\check\alpha\times \check\beta=\hat\alpha\times \hat\beta$; see
\eqref{eqn:const1} and \eqref{eqn:const2}.

The second task is to correctly align  $(\check R^{\phi}_{i}\times \check R^{ J}_{i})\times \check R^{ \theta}_{i}$ with
$(\hat R^{\phi}_{i}\times \hat R^{ J}_{i}) \times \hat R^{ \theta}_{i}$.
The exit set of $\check R^{\phi}_{i}\times \check R^{ J}_{i}$ is given
by $\check R^{\phi}_{i}\times \partial\check R^{ J}_{i}$, while the exit set  of $\hat R^{\phi}_{i}\times \hat
R^{ J}_{i}$ is given  by $\partial\hat
R^{\phi}_{i}\times \hat R^{ J}_{i}$.
Under   $F^{K}$, the rectangle $\check
R^{\phi}_{i}\times \check R^{ J}_{i}$ is transformed
into a topological rectangle whose exit set  components -- `top'
edge and `bottom' edge -- are shifted apart one from the other by at least $K \tau\check\delta-\eps_1$ in the
$\phi$-direction, and they are separated by a distance of
at least $\check\delta-\eps_1$ in the $
J$-direction. In order for $F^{K}(\check
R^{\phi}_{i}\times \check R^{ J}_{i})$ to be
correctly aligned with $\hat R^{\phi}_{i}\times \hat
R^{ J}_{i}$ under the identity, the image
$F^{K}(\check R^{\phi}_{i}\times \check R^{
J}_{i})$ should stretch across $\hat
R^{\phi}_{i}\times \hat R^{ J}_{i}$ in the
direction of $ \phi$ and its exit set components should
come out through the exit set components -- left edge and right edge
-- of $\hat R^{\phi}_{i}\times \hat R^{ J}_{i}$. To ensure the stretching all the way of the first window
across the second, we need that $( K\tau\|\check R^{
J}\| -\eps_1) - 2(\|\check
R^{\phi}_i\|+\eps_1)$, representing the shearing in the
$ \phi$ direction  of $F^{K}(\check R^{\phi}_{i}\times \check R^{ J}_{i})$  minus the size of the top and the bottom edge,   should be
bigger than $ \|\hat R^{\phi}_i\|$, representing     the `width'
of $\hat R^{\phi}_{i}\times \hat R^{ J}_{i}$ in the $ \phi$-direction.
Also, to ensure that the image  of the first rectangle does not meet
the entry set of the second rectangle, we need that the quantity
$\|\check R^{ J}\|+\eps_1$, representing an upper bound of the
`height' of $F^{K}(\check R^{\phi}_{i}\times \check
R^{ J}_{i})$ in the $ J$-direction, should be smaller than the quantity $\|\hat
R^{ J}\|$, representing the height of $\hat
R^{\phi}_{i}\times \hat R^{ J}_{i}$ in the $ J$-direction. By
construction $\|\check R^{\phi}_i\|=\check\gamma$,
$\|\check R^{ J}\|=\check\delta$ $\|\hat
R^{\phi}_i\|=\hat \gamma$,  and $\|\hat R^{
J}\|=\hat \delta$. By \eqref{eqn:const5} we have that
$K \check\delta>2\check\gamma+\hat\gamma+3\eps_1$, and by
\eqref{eqn:const7} we have $\hat\delta>\check\delta+\eps_1$.

We also need to ensure  the correct alignment of the
$d$-dimensional rectangle $\check R^{\theta}_{i}$, of exit
set $\partial\check R^{\theta }_{i}$,  with the
$d$-dimensional rectangle $\hat R^{\theta}_{i}$, of exit
set $\hat R^{\theta }_{i}$.   The correct alignment condition for windows that have the
exit sets consisting of their whole boundary means that the image of the first window contains the second window in its interior.
Thus, to make $\check R^{\theta }_{i}$ correctly
aligned with $\hat R^{\theta }_{i}$ under $F^{K}$, we
need to choose  $\hat R^{\theta }_{i}$ so that $\hat
R^{\theta }_{i}\subset \textrm{int}(\chi_\eps^{K }(\check
R^{\theta}_{i}))$.

By the product property  of correctly aligned windows, the outcome of this step is that   $\check W_{i}$ is correctly
aligned with $\hat W_i$ under   $F^{K}$.

\subsubsection{Step 4.} To construct a long sequence of correctly
aligned windows that visits all the sets
$\{\mathcal{L}^\eps_{i }\}_{i=0,\ldots, n_1}$, we start with
constructing a pair of  correctly aligned windows $W_0^-$ and
$W_1^+$, by the heteroclinic intersection $(\{z_{0,1}\}\times N)\cap\tilde\Gamma_\eps$ of
$W^u(\mathcal{L}^\eps_{0})$ and $W^s(\mathcal{L}^\eps_{1 })$, as in Step 1.  Then, as in Step 2, we construct the window $\hat W_0$
by $\mathcal{L}^\eps_{ J_0} $  and $\check W_1$ by $\mathcal{L}^\eps_{1} $. At this initial step, for the uniformity of the
notation, we set $\check W_0=\hat W_0$. Then, we continue the construction recursively, following Step 1, Step 2, and Step 3,  until we arrive with a
window $\check W_{n_1}$ by $\mathcal {L}_{n_1}$.
Hence, we obtain the  sequence of correctly aligned windows
\[ \hat W_0, W_0, W_1, \check W_1, \ldots, \hat W_{n_1-1}, W_{n_1-1}, W_{n_1}, \check{W}_{n_1}\]
starting from $\mathcal{L}^\eps_{0}$ and ending by $\mathcal{L}^\eps_{n_1}$. We apply the Shadowing Lemma type of result Theorem
\ref{theorem:detorb}, and conclude that there is an orbit
$\{y_i\}_{i=0,\ldots, n_1}$ with $y_i\in\check W_i$ for all $i$,  and
$y_{i+1}=F^{K +M +N }(y_i)$ for $i=0,\ldots,n_1-1$.

\subsection{Proof of Theorem \ref{thm:main1} and of Theorem \ref{thm:main2}}
For  Theorem \ref{thm:main1} the initial condition $\theta_0$ is chosen to be a non-trivial uniformly recurrent point.
For Theorem \ref{thm:main2} the initial condition $\theta_0$ can be any point in $N$. Then the construction from the previous section is performed.
The  outcome is an orbit $\{y_i\}_{i=0,\ldots,n_1}$ is $O(1)$ close to
the pseudo-orbit underlying the sequence of elementary building blocks  constructed in  Subsection \ref{subsection:gainseq}.
We can estimate the time it takes of an orbit $y_i$ to move from a
window $\check W_0$ to the window $\check W_{n_1}$.
The pseudo-orbit from Subsection \ref{subsection:gainseq} achieves a growth of scaled energy $\Delta H_\eps=O(\eps)$ in a scaled time $\Delta s=O(1/\eps^2)$. Each pseudo-orbit corresponding to a single elementary building block gives rise to a sequence of windows that are correctly aligned. The time  required by the correct alignment of windows, and the corresponding time it takes to the orbit to move along those windows, is $O(1)$, with the constants being the same for all windows in the sequence. Thus, the scaled time it takes the orbit $\{y_i\}_{i=0,\ldots,n_1}$ to travel from $y_0$ to $y_{n_1}$ is $O(1/\eps^2)$, equal to the time along the sequence of elementary building blocks multiplied by some constant independent of $\eps$, and hence on the energy.
The gain of physical energy along this orbit is $\Delta H=O(1/\eps)$, and the physical time that is spent is $\Delta t=O(1/\eps)$. Thus the change in the physical energy is
proportional to the time along the orbit, i.e. $\Delta t\approx
\Delta H\approx O(1/\eps)$, as claimed.

Since concatenations of correctly aligned windows are still correctly aligned, the above construction of correctly aligned windows can be continued for a time $O(1)$ to achieve a $O(1)$ change of the physical energy that covers the interval $H=\frac{1}{\eps^2}H_\eps\in[E_*,2E_*]$, where $E_*$ was the choice of initial energy.
To grow the physical energy to infinity, we   re-initialize the process staring with $\eps=1/\sqrt{2E_*}$, and we construct a sequence of correctly aligned windows as before. Again, the fact that concatenations of correctly aligned windows are still correctly aligned yields a longer sequence of correctly aligned windows. The construction of correctly aligned windows can be repeated inductively, yielding an infinite sequence of correctly aligned windows, and a corresponding shadowing orbit whose energy grows to infinity in time.   The estimate on the time does not get any worse since moving
up along the infinite chains corresponds to higher energy levels
where the speed of diffusion is only growing faster. Thus the energy growth is linear with respect to time.

\begin{rem} In  the above construction we should remark that the windows are of size $O(1)$, while the distance from
$\mathcal{L}^\eps_{i}$ to $\mathcal{L}^\eps_{i+1}$
is only of order $O(\eps^3)$. Thus, for an orbit
$\{y_i\}_{i=0,\ldots, n_1}$ that goes from $\check W_i$ about
$\mathcal{L}^\eps_{i}$ to $\check W_{i+1}$ about
$\mathcal{L}^\eps_{i+1}$ the net energy change is undetermined.
However, the method of correctly aligned windows does detect  an orbit whose physical energy changes by $O(1)$, from $E=E_*$ to $E=2E_*$, within a scaled time $O(1)$.    Hence, the topological argument is not capable to detect the
detailed changes of the energy along the orbit, but only the significant changes.

We could get more control on the energy levels visited by choosing the windows smaller but then we would get worse estimates on the  time.
This phenomenon resembles the `energy-time' uncertainty principle  of Heisenberg \cite{Heisenberg}.
\end{rem}

\subsection{Proof of Theorem \ref{thm:symbolic}}
The function $\mathcal{E}$ represents  a prescribed energy path with upper bounded derivative; this condition is necessary as the energy of the perturbed system cannot grow faster than linearly. The function $\mathcal{T}$ represents a parametrization of the time.  The argument is the same as for Theorem \ref{thm:main1}, provided that we choose the  infinite sequence of level sets  $\{J_\eps^{i}\}_{i\in\mathbb{N}}$ to follow the energy path $\mathcal{E}$, as described in Subsection \ref{subsection:reduction}.

\subsection{Regularity}\label{subsection:regularity}

We note that the if we assume that the metric and the potential
are $C^r$, we conclude that the flow is $C^{r-1}$ and so is the
map $F$. The theory of normal hyperbolicity requires at least $C^1$-differentiability.

Since we are using a derivative with respect to parameters and
even estimate the remainders, we would like that $r - 1 \ge 2$.

We note that the $C^1$-differentiability of the flow $\chi$ on the external manifold $N$
is sufficient for the argument, since the estimates do not involve any derivatives along the solution curves of $\chi$.

Hence, in Section \ref{section:main} it suffices to take $r_0 = 3$.

\section*{Acknowledgement} A part of this work has been done while M.G. and R.L. participated to the program Stability and Instability in Mechanical Systems, at the Centre de Recerca Matem\`atica, Barcelona, Spain. Another  part of this work has been done while M.G. was a member and R.L. was a visitor of the IAS. Both authors are very grateful to these two institutions. M.G. would also like to thank Keith Burns and Amie Wilkinson for useful discussions.

\end{document}